\documentclass[10pt]{amsart}
\usepackage{amsmath}
  \usepackage{graphics} %% add this and next lines if pictures should be in esp format
\usepackage{graphicx}  %% add this and next lines if pictures should be in esp format
  \usepackage{epsfig} %For pictures: screened artwork should be set up with an 85 or 100 line screen
 \usepackage[colorlinks=true]{hyperref}
  \usepackage{float}
  \usepackage{caption}
\usepackage{amsmath}
\usepackage{wrapfig}
\usepackage{graphicx}
\usepackage{subcaption}
\usepackage{amssymb}

%Beauregard Packages
\usepackage{color}
\usepackage{enumitem} 
\usepackage{comment}

%\usepackage{booktabs}

  %\afterpage{\clearpage}
 % \usepackage[section]{placeins}
\setcounter{topnumber}{2}

\setcounter{bottomnumber}{2}
\setcounter{totalnumber}{4}

\newtheorem{theorem}{Theorem}[section]
\newtheorem{corollary}{Corollary}

\newtheorem{lemma}[theorem]{Lemma}
\newtheorem{proposition}{Proposition}

\theoremstyle{definition}

\newtheorem{remark}{Remark}

%%%%% Do we want equation numbers on all non-referenced equations? If so then remove the * in the following commands.
\newcommand{\beq}{\begin{equation*}}
\newcommand{\eeq}{\end{equation*}}
\newcommand{\beqn}{\begin{eqnarray*}}
\newcommand{\eeqn}{\end{eqnarray*}}

%% Place the running title of the paper with 40 letters or less in []
 %% and the full title of the paper in { }.
\title[ TYC strategy ] 
      {A Comparison of the Trojan Y Chromosome Strategy to Harvesting Models for Eradication of Non-Native Species}

% Place all authors' names in [ ] shown as running head;
% No more than 40 letters. Leave { } empty
% Please use `and' to connect the last two names if applicable
%\author[Parshad, Kumari, Kouachi ]{}

% It is required to enter MSC and Keywords.

\subjclass{Primary: 34C11, 34C23, 49J15; Secondary: 92D25, 92D40}
\keywords{mating system, stability and bifurcation, optimal control, biological invasions, biological control}

% Email address of each of all authors is required.
% You may list email addresses of all other authors, separately.

% Put your short thanks below. For long thanks/acknowlegements,
%please go to the last acknowlegments section.
%\thanks{The first author is supported by NSF grant xx-xxxx}

\begin{document}
\maketitle

\centerline{
\scshape  Jingjing Lyu$^{1}$, Pamela J. Schofield$^{2}$, Kristen M. Reaver$^{3}$, }
\centerline{
\scshape Matthew Beauregard$^{4}$ and Rana D. Parshad$^{5}$ 
 }
\medskip
{\footnotesize
% please put the address of the first author
 \centerline{1) Department of Mathematics,}
 \centerline{Chengdu University,}
   \centerline{ Chengdu,  Sichuan 610106, China.}
   \medskip
   \centerline{ 2) U.S. Geological Survey,}
 \centerline{ Wetland and Aquatic Research Center,}
   \centerline{Gainesville, FL  32653, USA.}
    \medskip
      \centerline{3) Cherokee Nation Technologies,}
 \centerline{Wetland and Aquatic Research Center,}
   \centerline{Gainesville, FL 32653, USA}
     \medskip
   \centerline{4) Department of Mathematics,}
 \centerline{Stephen F. Austin State University,}
  \centerline{Nacogdoches, TX 75962 , USA }
    \medskip
   \centerline{5) Department of Mathematics,}
 \centerline{Iowa State University,}
   \centerline{Ames, IA 50011, USA}
 
 }

\noindent
\quad \\
\quad \\

%\emph{This draft manuscript is distributed solely for purposes of scientific peer review. Its content is deliberative and predecisional, so it must not be disclosed or released by reviewers. Because the manuscript has not yet been approved for publication by the U.S. Geological Survey (USGS), it does not represent any official USGS finding or policy.}

%%The abstract of your paper
\begin{abstract}
The Trojan Y Chromosome Strategy (TYC) is a promising eradication method for biological control of non-native species. The strategy works by manipulating the sex ratio of a population through the introduction of \textit{supermales} that guarantee male offspring. In the current manuscript, we compare the TYC method with a pure harvesting strategy.  We also analyze a hybrid harvesting model that mirrors the TYC strategy. The dynamic analysis leads to results on stability of solutions and bifurcations of the model. Several conclusions about the different strategies are established via optimal control methods. In particular, the results affirm that either a pure harvesting or hybrid strategy may work better than the TYC method at controlling a non-native species population.
\end{abstract}
 \section*{Recommendations for Resource Managers}
 
 \begin{itemize}

 \item Where harvesting is feasible, it is as effective if not more effective than the classical TYC method. 
 Therein managers may attempt harvesting female fish while stocking males or harvesting both male and female fish.

\item Managers may attempt linear harvesting, saturating density dependent harvesting and unbounded density dependent harvesting. Linear harvesting is seen to be the most effective.

\item We caution against the outright use of harvesting due to various density dependent effects that may arise. To this end hybrid models that involve a combination of harvesting and TYC type methods might be a better strategy.  

 \item One may also use harvesting as a tool in mesocosm settings to predict the efficacy of the TYC strategy in the wild.

 \end{itemize}

 \section{Introduction}
\subsection{Background}

Biological invasions are the ``uncontrolled spread and proliferation of species to areas outside their native range" \cite{L16}. The rate of such invasions in the United States continues to rise and, subsequently, the financial and ecological damage caused by them \cite{Pimentel05, S00}. Non-native species can be difficult to manage \cite{2,14}. For such reasons, the spread and control of non-native species is an important and timely problem in spatial ecology and much work has been devoted to this issue \cite{A06, A12, B07, C01, L12, M00, 089, S97, V96}. Current eradication efforts for invasive aquatic species usually involve chemical treatment, local harvesting,  dewatering, ichthyocides, or a suitable combination \cite{SL15}. Unfortunately, all of these methods are known to negatively impact native fauna \cite{SL15}.% For instance in 2009, a local eradication effort by the United States Geological Survey (USGS) using \emph{mass scale} poisoning to prevent the invasive Asian carp from entering the Chicago Sanitary and Ship Canal  \cite{CT09}, resulted in tens of thousands of dead native fish, but \emph{only one} dead Asian carp. 

An \emph{alternative} method that has been proposed for eradication of non-native species is the Trojan Y Chromosome strategy (TYC) \cite{GutierrezTeem06, TGP13}. Unlike other technological approaches, the TYC strategy does not require within-chromosome genetic modifications, rather, it involves a reassortment of (whole) pre-existing sex chromosomes among individuals and is not considered a genetically modified organism (GMO) \cite{Schill2017}. These manipulations can cease at any time and therefore the strategy is reversible. The strategy works by adding feminized males and/or feminized super males (containing two Y chromosomes) to an existing invasive population, to skew the sex ratio of subsequent generations to contain an increasing number of males (i.e., fewer and fewer females in each generation). %see Fig.~\ref{Fig:TYC-Eradication-Strategy}.
The gradual reduction in females may lead to eventual extinction of the population (see the right panel of Fig.~\ref{Fig:TYC-Eradication-Strategy_comp}). This strategy has been of much interest lately \cite{TGP13, CW07a, CW07b, CW09, Gutierrez12, P11, p10, ParshadGutierrez11, Parshad13, P09,  SDE2013, ODE2013, Z12}. 
% \begin{figure}[htbp]
% \centering{
%  {\includegraphics[height=4cm, width=12cm]{}
%  }
% \caption{\small The pedigree tree of the TYC model (that demonstrates {\it Trojan Y-Chromosome} eradication strategy).
% (a) Mating of a wild-type XX female (f) and a wild-type XY male (m). (b) Mating of a wild-type XY male (m) and a sex-reversed YY female (r). (c) Mating of a wild-type XX female (f) and a YY supermale (s). (d) Mating of a sex-reversed YY trojan female (r) and a YY supermale (s). Red color represents wild types, and white color represents phenotypes.}
% \label{Fig:TYC-Eradication-Strategy}
% }
% \end{figure}
% 
\begin{figure}[H]
 \centering{
  {\includegraphics[height=4.5cm, width=6cm]{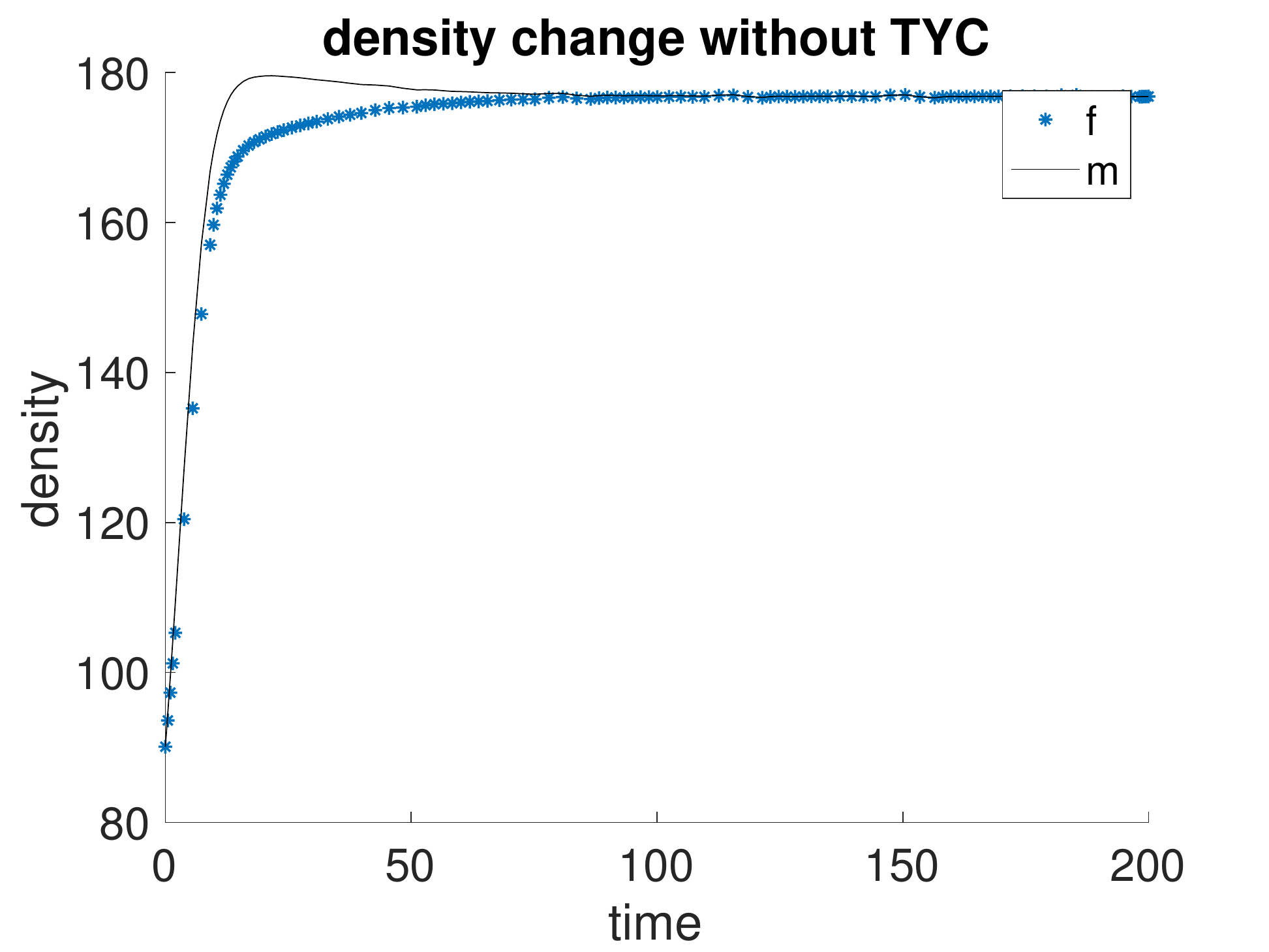}
  \includegraphics[height=4.5cm, width=6cm]{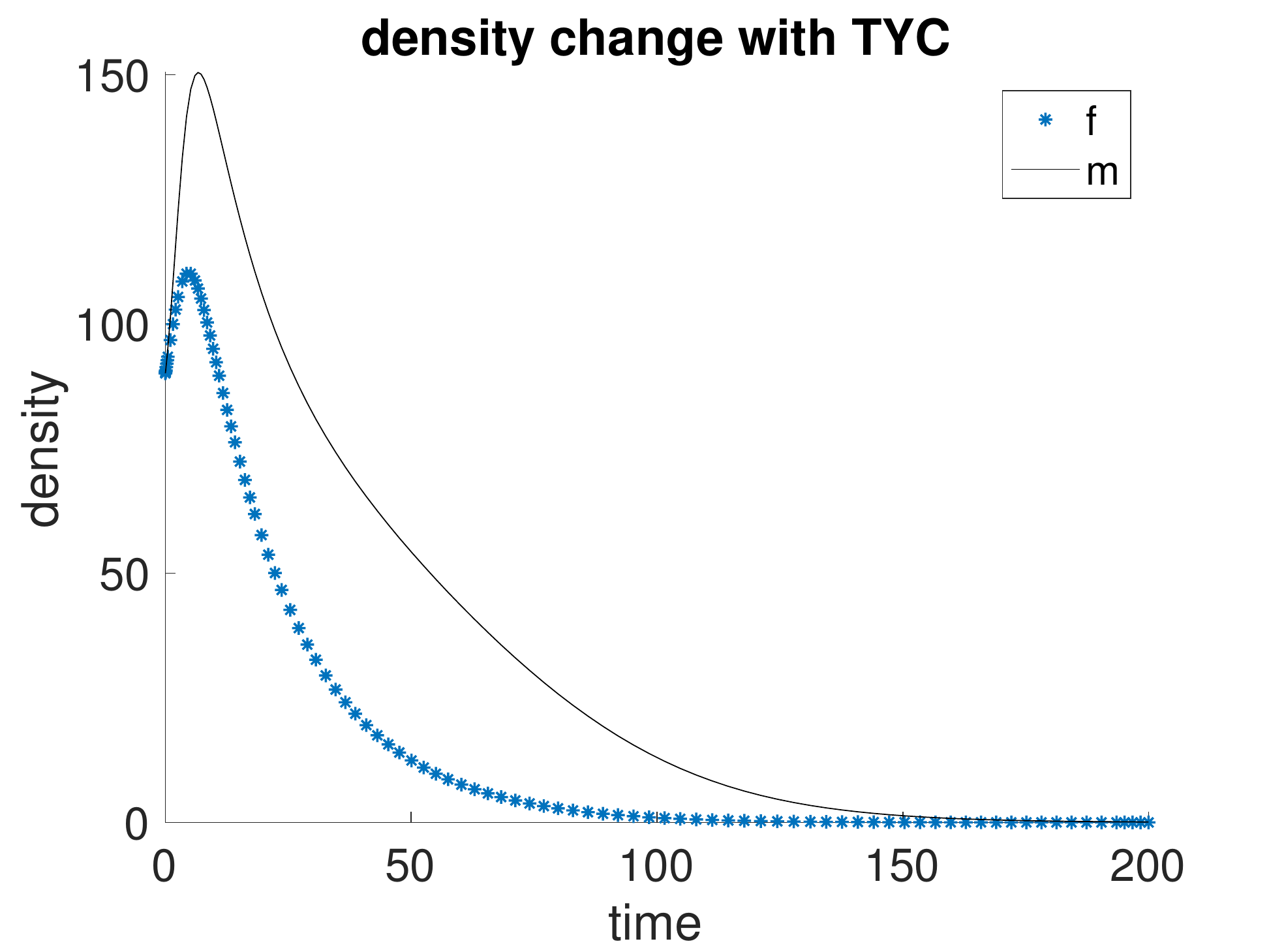}
  
  }
 \caption{\small Left panel: the female/male density change with time without TYC strategy. Right Panel: the female/male density change with time by introducing YY supermales (TYC strategy).}
 \label{Fig:TYC-Eradication-Strategy_comp}
 }
 \end{figure}
 
%A goal of the current manuscript is to generate population data from mesocosm experiments to establish our models.
%To this end we have chosen to work with guppy (\emph{Poecilia reticulata}) in the laboratory and get the data to model its population dynamics herein. We use only wild type male and female guppies. Guppy is a tropical ornamental fish that is popular as an aquarium pet. This species is introduced intermittently across the USA and has established local reproducing populations in some areas \cite{N19}. Guppy is used widely in laboratory studies and is especially well-suited to our experiments due to its short generation time (ca. four weeks until sexual maturity), sexual dimorphism, and peaceful nature.

Fig.~\ref{Fig:TYC-Eradication-Strategy_comp} is a simple demonstration of the theoretical power of the TYC strategy. There is recent laboratory and field progress on TYC from two groups. (1) Dr. Pamela Schofield's laboratory, at the U.S. Geological Survey Wetland (USGS) and Aquatic Research Center in Gainesville, Florida is working on developing YY males for several fishes, including the fancy guppy fish (\textit{Poecilia reticulata}). (2) The Eagle Fish Genetics Lab of the Idaho Department of Fish and Game has produced large stocks of YY males, for brook trout (\emph{Salvelinus fontinalis}) \cite{Schill16}. These have been released in the field with promising preliminary results \cite{Schill18}. However, personal communication with chief scientists at the above labs \cite{SchillPersonalComm} suggests that the challenges in putting TYC into practice are (1) the design and production of the $YY$ males and females and (2) \emph{in vitro} testing of the strategy. The goal of this manuscript is to compare harvesting strategies with TYC. We wanted to know whether TYC, a new technology, could out compete the traditional strategy of harvesting in reducing populations of non-native species. Harvesting has been used to reduce populations of non-native species and subsequently moderate their negative effects on environments \cite{G14, P96}. The use of harvesting is restricted to certain field situations where the target organisms can be encountered, detected and removed in a practical manner.

Our specific objectives were:
\begin{enumerate}
\item Investigate alternate biological control strategies that \emph{do not} require $YY$ males or females. 
\item Compare and contrast such strategies to the TYC strategy, via an optimal control approach. 
\item Develop such strategies so that they could be used in \emph{conjunction} with the established TYC strategy as a hybrid strategy or become a \emph{novel} strategy in itself.
\item To use preliminary population data from mesocosm experiments to establish our models. Note that these experiments contain wild type males and females only. There are no YY males present in the mesocosm.
\item To compare and contrast our harvesting strategies to the TYC strategy, using realistic parameters that are \emph{outputs of the above mentioned mesocosm experiments}.
\item Establish a mesocosm framework via harvesting - that could in turn be used to test the efficacy of the TYC strategy in the wild.
\end{enumerate}
 
For our mesocosm experiments we have chosen to work with guppy (\emph{Poecilia reticulata}) in the laboratory and get the data to model its population dynamics herein. Our experiments contain only wild type male and female guppies. Guppy is a tropical ornamental fish that is popular as an aquarium pet. This species is introduced intermittently across the USA and has established local reproducing populations in some areas \cite{N19}. Guppy is used widely in laboratory studies and is especially well-suited to our experiments due to its short generation time (ca. four weeks until sexual maturity), sexual dimorphism, and peaceful nature.

%%%%%%%%%%%%%%%%%%%%%%%%%%%%%%%%%%%%%%%%%%%

\subsection{Three-Variables TYC Model}%{Model 0: Classical TYC Model}% Description
The three-variables TYC model, in which only a YY supermale is introduced, is described by a system of three ordinary differential equations for state variables: a wild-type XX female ($f$), a wild-type XY male ($m$) and a YY supermale ($s$):
\begin{equation}
\label{TYC_1}
\begin{split}
& \frac{df}{dt}=\frac{1}{2}fm\beta L-\delta f,\\
& \frac{dm}{dt}=\left(\frac{1}{2}fm+fs\right)\beta L-\delta m,\\
& \frac{ds}{dt}=\mu-\delta s,
\end{split}
\end{equation}
where $f,~m,$ and $s$ are the population densities; $\beta$ represents the birth rate; $\delta$ is the death rate; $\mu$ denotes the non-negative introduction rate of YY supermales. The logistic term is given by
\begin{equation}
\label{jj_1}
L=1-\frac{f+m+s}{K},\quad\quad\quad\quad\quad\quad\quad\quad\quad
\end{equation}
where $K$ is the carrying capacity of the ecosystem. It is assumed that $0<\beta$ and $\delta<1$ in this manuscrript. 

\subsection{Preliminary Data}
Eleven months of preliminary population data were collected from a mesocosm experiment conducted at the USGS facility in Gainesville, Florida (Fig.~\ref{Fig:fig2n1_jj}, Fig.~\ref{Fig:fig2n1}). Note these experiments consisted of wild type males and wild type females only. There are no YY males present in the mesocosm. Our goal was to infer life history parameters for \emph{P. reticulata}, such as birth and death rates and carrying capacity, from this data - and then use that to simulate TYC type models. The experiment was conducted in an indoor laboratory in one rectangular fiberglass tank (114 x 56 x 61 cm) with aerated well water at a depth of 30 cm (192L of total volume). Period water temperature fluctuated with ambient indoor temperature  and ranged from approximately 22-25 C.  Living aquatic vegetation (Hydrilla) was added to provide cover for the fish and to seed live food sources such as microcrustaceans. The fish were fed three times weekly with commercial flake food to supplement the live food sources. No predators or inter-specific competitors were present in the mesocosms. A total of 30 wild-type fancy guppies (15 XY males and 15 XX females) were initially introduced to the mesocosm. They were allowed to reproduce in the mesocosm for 11 months. Population counts were made monthly (note that the guppy generation time is $\approx$ 1 month) - see right panel in Fig.~\ref{Fig:fig2n1}. 
 
The population data is best fit to the basic population model where $s=0$ (without introduced YY super males) in \eqref{TYC_1}. This enables the various population parameters to be inferred, see right panel in Fig.~\ref{Fig:fig2n1}. The best fit parameters are $\beta=0.0057, ~\delta=.0.0648,$ and $K=405.$
 
 \begin{figure}[h!]
 \centering{
  \includegraphics[scale=0.18]{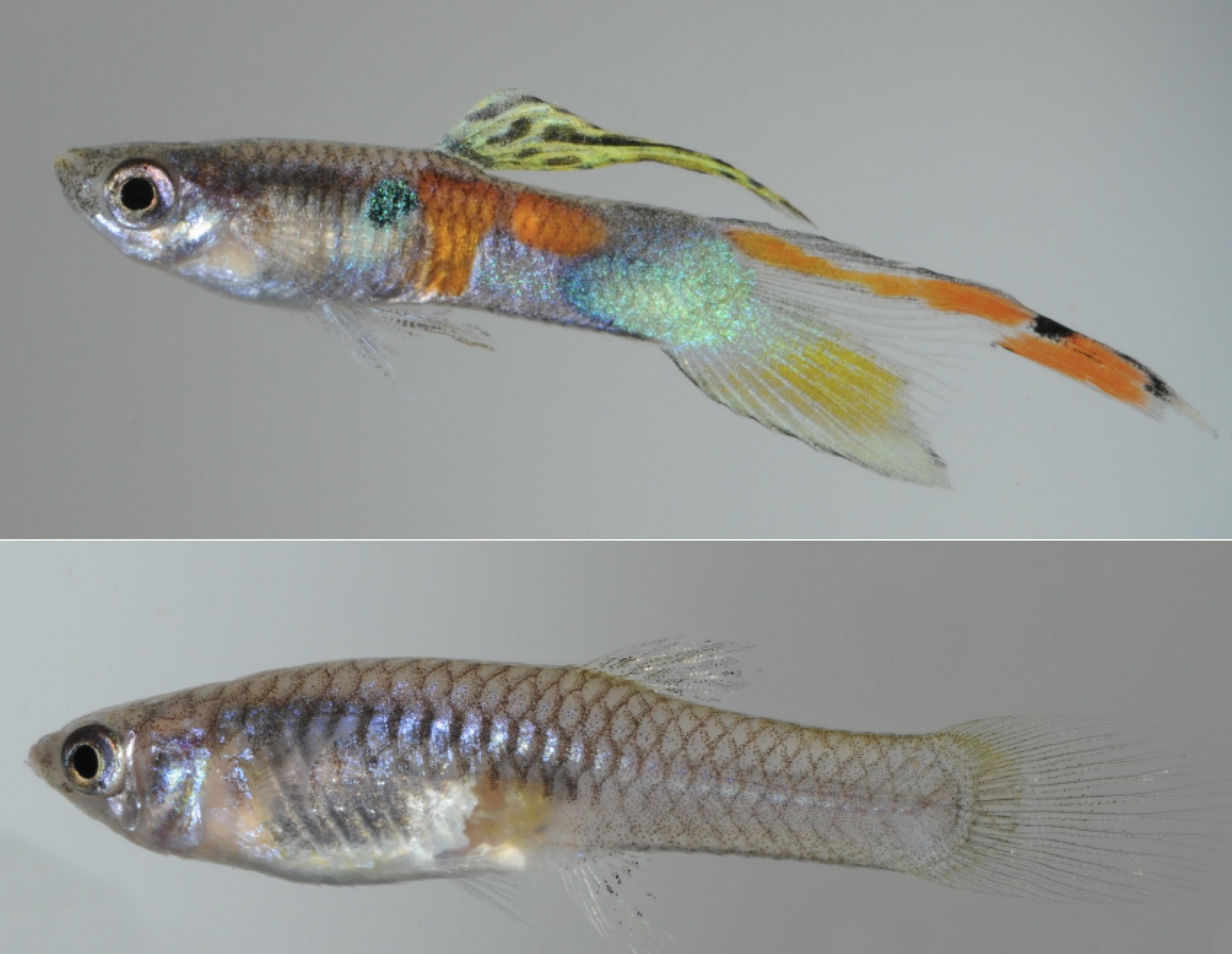}
  }
 \caption{\small Male (top) and female (bottom) fancy guppy (\textit{Poecilia reticulata}). Photos were taken by Howard Jelk, USGS.
 \label{Fig:fig2n1_jj} 
 }
 \end{figure}

 \begin{figure}[H]
 \centering{
  \includegraphics[scale=0.035]{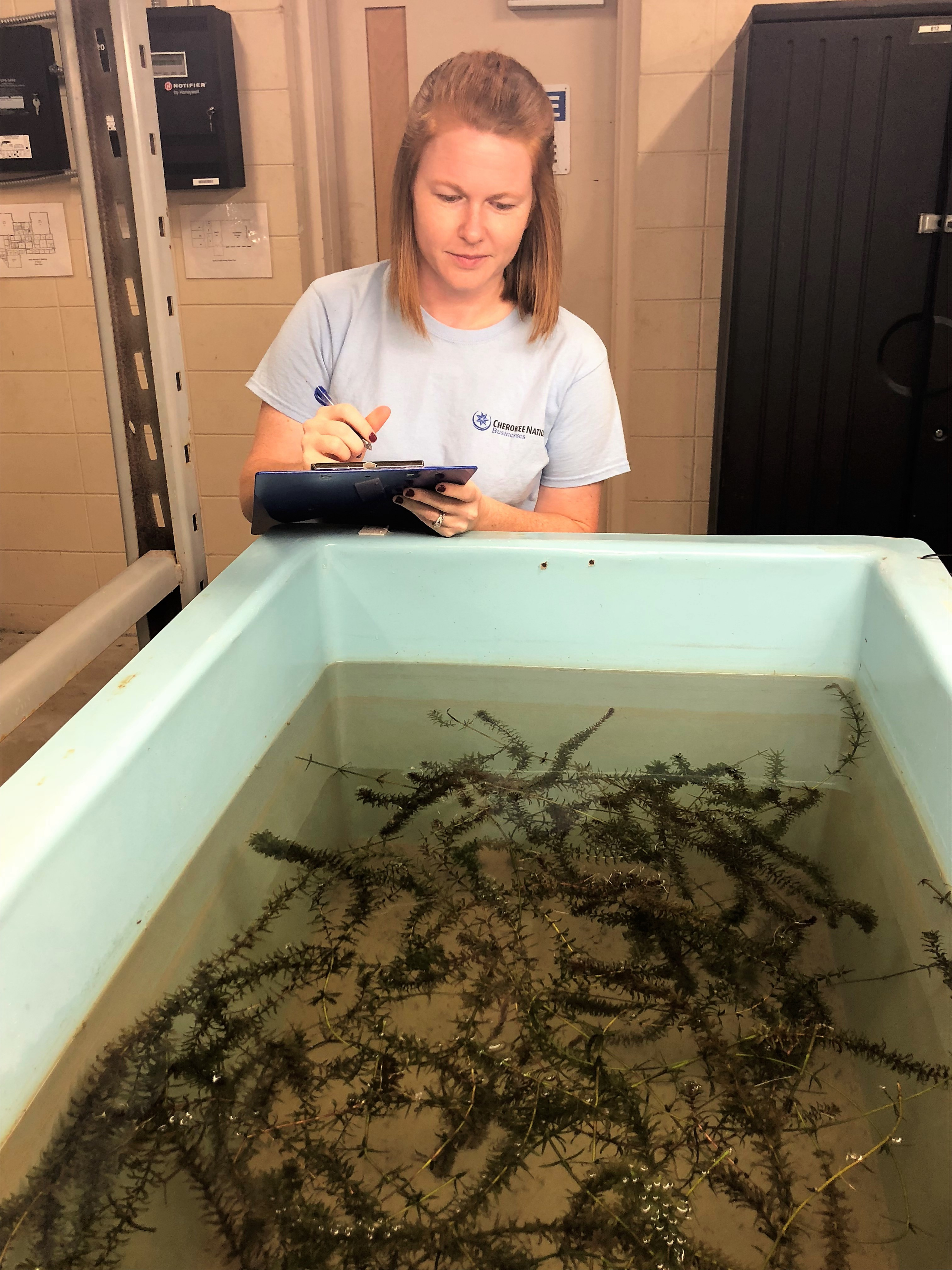}
     \includegraphics[scale=0.45]{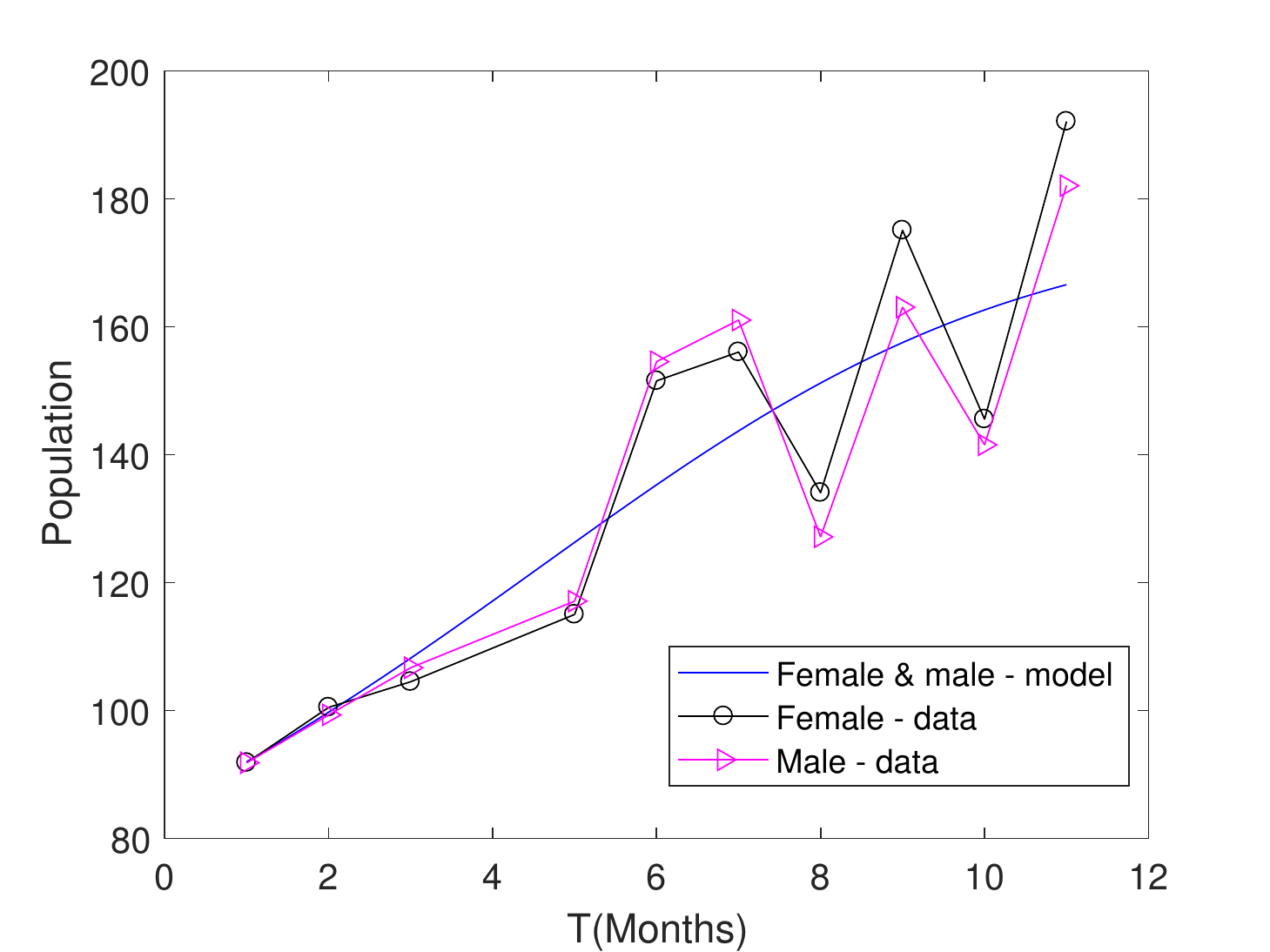}
  }
 \caption{\small Left panel: This is a tank from the USGS facility where the mesocosm experiment was run with fancy guppies (\textit{Poecilia reticulata}). Right panel: preliminary population data over 11-month period. The smooth curve is a result of simulating the basic population model \eqref{TYC_1} (with $s=0$) with best-fit parameters $\beta=0.0057, ~\delta=0.0648,$ and $K=405$.
 \label{Fig:fig2n1} 
 }
 \end{figure}

\section{Equilibrium and Stability Analysis}
\subsection{Equilibrium and Stability Analysis when $\boldsymbol{\mu>0}$}

We relegate a significant amount of the local stability analysis to the appendix see section \ref{app}. We begin by stating the following theorem,

\begin{theorem}
\label{thm_2}
Let $\mu>0$. The boundary equilibrium, $(0,0,\frac{\mu}{\delta})$, of system \eqref{TYC_1} is locally stable if $\frac{1}{9}<\delta< 1$ and unstable if $0 < \delta \leq \frac{1}{9}$.
\end{theorem}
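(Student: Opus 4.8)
The approach will be Lyapunov's indirect method. First I would linearize \eqref{TYC_1} about $E_0=(0,0,\mu/\delta)$: substitute the equilibrium into the Jacobian of the right-hand side, recording the logistic factor $L_0=1-\frac{\mu}{\delta K}$ at that point. Since the $s$-equation does not depend on $f$ or $m$, it contributes the factor $\lambda+\delta$ to the characteristic polynomial, and the remaining two eigenvalues are the roots of a quadratic $q(\lambda)=\lambda^2+b_1\lambda+b_0$ whose coefficients $b_1,b_0$ are explicit in $\beta,\delta,\mu,K$ (through $L_0$).

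By the Routh--Hurwitz criterion, $E_0$ is locally asymptotically stable precisely when $b_1>0$ and $b_0>0$. The content of the theorem is that, after simplification, the decisive one of these two inequalities changes sign exactly as $\delta$ passes through $1/9$, with the $\beta,\mu,K$-dependence canceling out of the threshold; confirming that cancellation is the first sanity check I would run, since it is what makes the clean statement possible. Solving the relevant inequality should pin the stable range to $(1/9,1)$; for $0<\delta<1/9$ the violated condition produces a root of $q$ with positive real part, so $E_0$ is unstable, whereas for $1/9<\delta<1$ both conditions hold and $E_0$ is locally asymptotically stable.

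The one place linearization is silent is the endpoint $\delta=1/9$, where $q$ has a root at the origin. There I would reduce \eqref{TYC_1} to the flow on the one-dimensional local center manifold through $E_0$ and compute the leading (quadratic) coefficient of the reduced field: if it is nonzero, $E_0$ is unstable, because on the side where that term points outward trajectories leave every neighborhood, and forward invariance of the cone $\{f\ge 0,\ m\ge 0\}$ keeps the relevant side inside the physical region. I expect the two genuine obstacles to be (i) carrying the characteristic-polynomial computation through cleanly enough that the number $1/9$ and the disappearance of $\beta,\mu,K$ become transparent --- the bookkeeping the appendix evidently absorbs --- and (ii) the center-manifold analysis at $\delta=1/9$, the only regime where the linear theory does not settle the matter.
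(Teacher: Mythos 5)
Your overall frame --- linearize at $(0,0,\mu/\delta)$ and read off stability from Routh--Hurwitz --- is the same as the paper's, but your write-up defers the one computation that \emph{is} the theorem, and the expectations you attach to it do not survive actually doing it. Every term of the Jacobian of \eqref{TYC_1} at this equilibrium, other than the diagonal $-\delta$'s and the $(m,f)$-entry, carries a factor of $f$ or $m$; since $f^*=m^*=0$ there, one gets $\partial_m\dot f=\partial_s\dot f=\partial_s\dot m=0$, and the $(f,m)$-block is triangular with both diagonal entries $-\delta$. So your quadratic is $q(\lambda)=(\lambda+\delta)^2$: its Routh--Hurwitz coefficients are $2\delta$ and $\delta^2$, positive for every $\delta>0$, with no $\beta,\mu,K$ present to cancel and no sign change at $\delta=1/9$. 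The ``sanity check'' you postpone is exactly where the argument stalls: the threshold $1/9$ does not emerge from the linearization you describe. For comparison, the paper's proof extracts $1/9$ from the cubic $\lambda^3+3\delta\lambda^2+3\delta^2\lambda+\delta^2=0$, i.e.\ $(\lambda+\delta)^3=\delta^3-\delta^2$, whose complex pair has real part $-\frac12(\delta^3-\delta^2)^{1/3}-\delta$ changing sign at $\delta=1/9$; note that this cubic does \emph{not} have $-\delta$ as a root, so it is incompatible with the factorization $(\lambda+\delta)\,q(\lambda)$ that your (structurally correct) observation about the decoupled $s$-equation forces, and your proposal never confronts that tension --- indeed the paper's own appendix formula $k_{10}=J_{12}J_{21}J_{33}-J_{11}J_{22}J_{33}$ evaluates to $\delta^3$ here, not $\delta^2$.

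The treatment of the borderline case also targets the wrong degeneracy. In the paper's calculation the critical eigenvalues at $\delta=1/9$ are a purely imaginary pair, $\pm\frac{\sqrt3}{2}(\delta^2-\delta^3)^{1/3}\,i$, not a zero root; the relevant reduction there would be a two-dimensional center manifold with a Hopf-type first Lyapunov coefficient, not the one-dimensional center manifold with a quadratic leading term that you propose. And in the linearization as you (correctly) set it up, nothing degenerates at $\delta=1/9$ at all, so there is no center-manifold step to run. In short, the proposal is an outline whose decisive step is asserted rather than carried out, and carrying it out does not produce the claimed dichotomy in $\delta$.
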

\begin{proof}
Now consider the boundary equilibrium of model \eqref{TYC_1}, which satisfies $f^*=0$, is $(0,0,\frac{\mu}{\delta})$. Now evaluate \eqref{Jac_1} at $(0,0,\frac{\mu}{\delta})$. Then the characteristic equation about $(0,0,\frac{\mu}{\delta})$ is 
\beq%\begin{equation}
\label{jj_18}
\lambda^3+3\delta \lambda^2+3\delta^2 \lambda+\delta^2=0,
\eeq%\end{equation}
with corresponding eigenvalues
\beq%\begin{equation}
\label{jj_19}
\begin{split}
& \lambda_1=(\delta^3-\delta^2)^\frac{1}{3}-\delta,\\
& \lambda_2=-\frac{1}{2}(\delta^3-\delta^2)^\frac{1}{3}-\delta+\frac{\sqrt{3}}{2}(\delta^3-\delta^2)^\frac{1}{3} i,\\
& \lambda_3=-\frac{1}{2}(\delta^3-\delta^2)^\frac{1}{3}-\delta-\frac{\sqrt{3}}{2}(\delta^3-\delta^2)^\frac{1}{3} i.
\end{split}
\eeq%\end{equation}
Since $0<\delta<1$, then $\lambda_1<0$. The inequality $-\frac{1}{2}(\delta^3-\delta^2)^\frac{1}{3}-\delta<0$ implies that $\delta>\frac{1}{9}$. According to the Routh Hurwitz stability criteria, the system \eqref{TYC_1} under $f^*=0$ is locally stable if $\delta>\frac{1}{9}$.
\end{proof}

\subsection{Equilibrium and Stability Analysis when $\boldsymbol{\mu=0}$}

If $\mu=0$, then $s^*=0$. By direct computations, we can verify that the equilibrium of model \eqref{TYC_1} is of the form 
\beq%\begin{equation}
\label{20}
(f^*, m^*, 0), \quad \mathrm{with} \quad  f^*=m^*.
\eeq%\end{equation}
Let $f^*_{\pm}=m^*_{\pm}=\frac{K}{4} \pm \frac{K}{4} \sqrt{1-\frac{16\delta}{\beta K}}$. If $16\delta=\beta K$, then the model \eqref{TYC_1} has two equilibria for which $f^*=\frac{K}{4}$ and $0$, respectively. If $16\delta< \beta K$, the model has 3 different equilibria with $f^*= f^*_+,~ f^*_-$ and $0$. If $16\delta>\beta K$, the model has only one trivial equilibrium, $f^*=0$. 

In the case $16\delta=\beta K$, the equilibria are $(\frac{K}{4}, \frac{K}{4}, 0)$ and $(0,0,0)$, repectively. The corresponding characteristic equation about $(\frac{K}{4}, \frac{K}{4}, 0)$ is 
\beq%\begin{equation}
\label{jj_21}
\lambda^3+(3\delta-\frac{K\beta}{16})\lambda^2+(3\delta^2-\frac{K\beta \delta}{8})\lambda-\frac{\delta^2(K\beta -16\delta)}{16}=0,
\eeq%\end{equation}
with eigenvalues 
\beq%\begin{equation}
\label{jj_22}
\begin{split}
& \lambda_1=-\delta<0, \\
& \lambda_2=-\delta<0, \\
& \lambda_3=\frac{1}{16}K\beta-\delta=0. 
\end{split}
\eeq%\end{equation}
By Routh Hurwitz criterion, $(\frac{K}{4}, \frac{K}{4}, 0)$ is unstable. Similarly, we find $(0,0,0)$ is locally stable. 

In the case $16\delta<\beta K$, the equilibria are $(f^*_+, m^*_+,0), (f^*_-, m^*_-,0)$ and $(0,0,0)$. The characteristic equation about $(f^*_+, m^*_+,0)$ is 
\beq%\begin{equation}
\label{jj_23}
(\lambda+\delta)^2\left(\lambda+\frac{1}{8}K \beta \sqrt{\frac{K\beta-16\delta}{k\beta}}+\frac{1}{8}K\beta-2\delta\right)=0
\eeq%\end{equation}
The corresponding eigenvalues are $\lambda_1=\lambda_2=-\delta<0$, and $\lambda_3=-\frac{1}{8}K \beta \sqrt{\frac{K\beta-16\delta}{k\beta}}-\frac{1}{8}K\beta+2\delta$. Since $16\delta<\beta K$ then $\lambda_3<0$. This implies $(f^*_+, m^*_+,0)$ is locally stable. 

Similarly, we find the characteristic equation about $(f^*_-, m^*_-,0)$ is 
\beq%\begin{equation}
\label{jj_24}
(\lambda+\delta)^2\left[\lambda-\left(\frac{1}{8}K \beta \sqrt{\frac{K\beta-16\delta}{K\beta}}-\frac{1}{8}K\beta+2\delta\right)\right]=0.
\eeq%\end{equation}
It is easy to verify the third eigenvalue $\left(\frac{1}{8}K \beta \sqrt{\frac{K\beta-16\delta}{K\beta}}-\frac{1}{8}K\beta+2\delta \right)>0$ under $16\delta<\beta K$. Therefore $ (f^*_-, m^*_-,0)$ is unstable. 

In the case of  $16\delta>\beta K$, the only equilibrium is $(0,0,0)$, which is locally stable.

%{\color{red}INPUT TABLE TO SUMMARIZE}

\begin{remark}
The three and four species TYC models are now known to blow-up in finite time \cite{PBT19}. Thus one must exhibit caution when dealing with such models by restricting initial conditions, as well as the rate of feminized male/feminized super male introduction $\mu$. In the current manuscript we restrict initial data and the size of $\mu$, s.t. we always have positive bounded solutions, and all of the ensuing optimal control theory can be applied. 
\end{remark}

%\begin{theorem}
%\label{thm_0}
%The trivial equilibrium of system \eqref{TYC_1} is locally stable for any real value of $\mu.$
%\end{theorem}
%\begin{proof}
%It's easy to verify that the Jacobian matrix \eqref{Jac_1} about $(0,0,0)$ has the eigenvalue $-\delta$ with algebraic and geometric multiplicity of three.  This indicates local stability.
%\end{proof}
%
%
%
%\begin{theorem}
%\label{glo_thm_1}
%When $\mu=0$, the equilibrium, $(0,0,0)$, of system \eqref{TYC_1}, is globally stable if $\beta K<\delta$.
%\end{theorem}
%
%\begin{proof}
%
%We begin by defining the Lyapunov functions 
%%
%\begin{eqnarray}
%\label{LyapunovFuncFM}
%V(x)&=&f+m,\\
%\label{LyapunovFuncFMS}
%W(y)&=&f+m+s,
%\end{eqnarray}
%%
%where $y=(f,m,s)$ and $x=(f,m)$.  Clearly, $V(x)$ and $W(y)$ have the two properties:
%\begin{enumerate}
%\item $V,~W \geq 0$, and $V=W=0$ if and only if $x=(0,0,0)$ and $y=(0,0)$, respectively, since $f \geq 0, m \geq 0$ and $s \geq 0$,
%\item $V,~W \to \infty$ as $\|x\|,~ \|y\| \to \infty$.
%\end{enumerate}
%
%
%
%
%
%
%
%
%
%It is left to show that $\dfrac{dW}{dt}<0$ for all $x \not =(0,0,0)$. Consider 
%%
%\beq%\begin{equation}
%\begin{split}
%\frac{dW}{dt}&=\frac{df}{dt}+\frac{dm}{dt}+\frac{ds}{dt}\\
%&=fm\beta L+fs\beta L-\delta(f+m+s)\\
%&\leq fm\beta+ fs\beta-\delta(m+s), \quad \mathrm{since} \quad L \leq 1, \\
%&\leq (m+s)(f\beta-\delta),\\
%&\leq(m+s)(K\beta-\delta).
%\end{split}
%\eeq%\end{equation}
%%
%Since $\beta K<\delta$ then this completes the proof.
%\end{proof}

%%%%%%%%%%%%%%%%%%%%%%%%%%%%%%%%%%%%%%%%%%%%%%

\subsection{Optimal Control Analysis}
The goal of this section is to investigate the mechanisms in our TYC system of equations, that, if controlled, could lead to optimal levels of both wild type female and male densities. We assume that the introduction rate $\mu$ is not known \emph{a priori} and enter the system as a time-dependent control. The response for the range is $0 \leq \mu(t) < \infty$. 

Consider the following objective function
\beq%\begin{equation}
J_0(\mu)=\int^{T}_{0} -(f+m)- \frac{1}{2}\mu^2 dt,
\eeq%\end{equation}
subject to the governing equations and initial conditions. Optimal strategies are derived for the objective function, where we want to minimize both female and male populations and also minimizing the YY males introduction rate $\mu$. Optimal controls are searched for within the set $U_0$, namely,
\beq%\begin{equation}
 U_0= \{ \mu~|~\mu \ \mbox{measurable}, \  0 \leq \mu < \infty,  \  t \in [0,T], \ \forall T\}.
\eeq%\end{equation}
The goal is to seek an optimal $\mu^{*}(t)$ such that,
\beq%\begin{equation}
\begin{split}
J_0(\mu^*) &=\underset{\mu} {\max} \int^{T}_{0} -(f+m)- \frac{1}{2}\mu^2 dt \\
&= \underset{\mu}{\min} \int^{T}_{0} f+m+\frac{1}{2}\mu^{2}  dt.
\end{split}
\eeq%\end{equation}

Consider the following existence theorem,

\begin{theorem}
\label{thm_tyc_1_oct}
Consider the optimal control problem \eqref{TYC_1} with $\mu=\mu(t)$. There exists $\mu^{*}(t) \in U_0$ such that
\begin{equation}
  J_0(\mu^{*}) = \underset{\mu}{\max} \int^{T}_{0} -(f+m) - \frac{1}{2}\mu^{2}  dt\\
\end{equation}
\end{theorem}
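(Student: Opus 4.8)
The plan is to establish existence by the direct method in the calculus of variations, in the standard form used for bilinear biological control problems (the Fleming--Rishel / Cesari existence theorem; see e.g. the formulation in Lenhart--Workman). Before invoking it one must record the a priori bounds that make the problem well posed: as flagged in the Remark above, we restrict the initial data $(f(0),m(0),s(0))$ and the magnitude of $\mu$ so that the solution of \eqref{TYC_1} exists on all of $[0,T]$ and satisfies $0\le f,m,s\le M$ for a constant $M=M(T)$ independent of the admissible control. Under this standing assumption every $\mu\in U_0$ generates a well-defined bounded state trajectory, so $J_0$ is finite on $U_0$, and the admissible class is nonempty since $\mu\equiv 0\in U_0$.

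Then I would verify the hypotheses of the existence theorem one at a time. (i) Nonemptiness of the set of admissible state--control pairs: done above. (ii) $U_0$ is a closed, convex subset of $L^2(0,T)$, the pointwise constraint $\mu\ge 0$ defining a closed convex cone. (iii) The right-hand side of \eqref{TYC_1} is, on the box $0\le f,m,s\le M$, bounded by an affine function of $(f,m,s,\mu)$: there $|L|\le 1+3M/K$, so the first two components are bounded by a constant and the third, $\mu-\delta s$, by $\mu+\delta M$. (iv) The integrand $-(f+m)-\tfrac12\mu^2$ is concave (in fact strictly concave) in the control $\mu$. (v) There are constants making $-(f+m)-\tfrac12\mu^2\le c_2-c_1|\mu|^{\gamma}$ with $\gamma>1$; take $c_1=\tfrac12$, $\gamma=2$, $c_2=0$, using $f,m\ge 0$. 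With (i)--(v) the cited theorem yields an optimal $\mu^*\in U_0$ attaining the maximum.

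If one prefers to argue from scratch: take a maximizing sequence $\mu_n$ with $J_0(\mu_n)\to\sup_{U_0}J_0=:J^*\ge J_0(0)>-\infty$. The coercive penalty gives $\tfrac12\int_0^T\mu_n^2\,dt=-\int_0^T(f_n+m_n)\,dt-J_0(\mu_n)\le -J_0(\mu_n)$, which is bounded, so $\mu_n$ is bounded in $L^2(0,T)$; pass to a subsequence with $\mu_n\rightharpoonup\mu^*$ weakly in $L^2$, and $\mu^*\ge 0$ a.e. since the admissible cone is weakly closed. Because the states are uniformly bounded, the right-hand sides of \eqref{TYC_1} are bounded (the $s$-equation only in $L^2$, via $\mu_n$), so $(f_n,m_n,s_n)$ is equicontinuous on $[0,T]$ and, by Arzel\`a--Ascoli, converges uniformly along a further subsequence to some $(f^*,m^*,s^*)$. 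Writing \eqref{TYC_1} in integrated form and letting $n\to\infty$ — the bilinear terms pass to the limit by uniform convergence, and $\int_0^t\mu_n\to\int_0^t\mu^*$ by testing the weak convergence against $\mathbf 1_{[0,t]}$ — shows $(f^*,m^*,s^*)$ is the state trajectory for $\mu^*$. Finally $-\int_0^T(f_n+m_n)\to-\int_0^T(f^*+m^*)$ and, by weak lower semicontinuity of the $L^2$-norm, $\int_0^T(\mu^*)^2\le\liminf\int_0^T\mu_n^2$; hence $J_0(\mu^*)\ge\limsup J_0(\mu_n)=J^*$, and since $\mu^*\in U_0$ also $J_0(\mu^*)\le J^*$, giving equality.

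The main obstacle is the unboundedness of the control set $U_0$, which blocks a naive compactness argument; this is precisely what the quadratic cost $-\tfrac12\mu^2$ repairs, by forcing $L^2$-boundedness of maximizing sequences (hypothesis (v)). A secondary but essential point is that \eqref{TYC_1} can blow up in finite time, so the a priori state bounds of the Remark are a prerequisite rather than a convenience: without them the admissible class could fail to be nonempty over a fixed horizon $[0,T]$, and the passage to the limit in the state equations — especially control of the bilinear terms $f_n m_n$ and $f_n s_n$ — would not go through. Everything else (convexity, concavity, affine growth on the invariant box, weak lower semicontinuity of the cost) is routine.
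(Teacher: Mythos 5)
Your proposal is correct and follows the same basic route as the paper: the paper's proof is a two-line appeal to the Fleming--Rishel existence theorem, citing ``global boundedness of the state variables and the control $\mu$'' together with concavity of the integrand in $\mu$. What you add is genuinely valuable, however. First, you notice the point the paper glosses over: $U_0$ imposes $0 \leq \mu < \infty$, so the control set is \emph{not} bounded, and the paper's stated reason (boundedness of the control) is not literally available; the correct substitute, which you supply, is the coercivity of the quadratic penalty $-\tfrac{1}{2}\mu^2$, which forces $L^2$-boundedness of maximizing sequences and plays the role of hypothesis (v) in the Cesari/Fleming--Rishel framework. Second, you correctly flag that the a priori bounds on initial data and on $\mu$ (the paper's blow-up Remark) are a prerequisite for the state trajectories to exist and be uniformly bounded on $[0,T]$, which the paper assumes implicitly. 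Third, your self-contained direct-method argument (maximizing sequence, weak $L^2$ convergence of $\mu_n$, Arzel\`a--Ascoli for the uniformly bounded, equicontinuous states, passage to the limit in the integrated equations, and weak lower semicontinuity of $\|\mu\|_{L^2}^2$) is a complete proof where the paper only cites \cite{FR75}. In short: same strategy, but your version repairs the one inaccurate step in the paper's justification and fills in all the details.
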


\begin{proof}
The compactness (closed and bounded in the ODE case) of the functional $J$ follows from the global boundedness of the state variables and the control $\mu$. Also the functional $J$ is concave in the argument $\mu$. This is easily verified via standard application \cite{FR75}. These facts in conjunction give the existence of an optimal control.
\end{proof}

We use Pontryagin's maximum principle to derive the necessary conditions on the optimal control. The Hamiltonian for $J_0$ is given by
\beq%\begin{equation}
\label{TYC_1_OCT_ham}
H_0=-(f+m) - \frac{1}{2}\mu^2 +\lambda_1 f'+\lambda_2 m'+\lambda_3 s'.
\eeq%\end{equation}
We use the Hamiltonian to find a differential equation of the adjoint $\lambda_i, i=1,2,3.$ Namely,
\beq%\begin{equation}
\begin{split}
%&\lambda_1'(t)=1-\lambda_1 [ \frac{m \beta}{2}(1-\frac{f+m+s}{K})-\frac{f  m \beta}{2K}-\delta]-\lambda_2 [\frac{m \beta}{2}(1-\frac{f+m+s}{K})-\frac{f m \beta}{2K}],\\
& \lambda_1'(t)= 1-\lambda_1 [ \frac{m \beta}{2}(1-\frac{f+m+s}{K})-\frac{f  m \beta}{2K}-\delta]\\
&\quad \quad \quad -\lambda_2[(\frac{m}{2}+s)\beta(1-\frac{f+m+s}{K})-\frac{\beta}{K}(\frac{fm}{2}+fs)],\\
&\lambda_2'(t)=1-\lambda_1 [ \frac{f \beta}{2}(1-\frac{f+m+s}{K})-\frac{f  m \beta}{2K}]\\
& \quad \quad \quad -\lambda_2 [\frac{f \beta}{2}(1-\frac{f+m+s}{K})-\frac{f m \beta}{2K}-\delta],\\
%&\lambda_3'(t)=\frac{\lambda_1 f m \beta}{2K} +\frac{\lambda_2 f m \beta}{2K}+\lambda_3 \delta,
&\lambda_3'(t)=\frac{\lambda_1 f m \beta}{2K} +\lambda_2[\frac{\beta}{K}(\frac{fm}{2}+fs)-\beta f (1-\frac{f+m+s}{K})]+\lambda_3 \delta,
\end{split}
\eeq%\end{equation}
with the transversality condition given as 
\beq%\begin{equation}
\lambda_1(T)=\lambda_2(T)=\lambda_3(T)=0.
\eeq%\end{equation}
Now considering the optimality conditions, the Hamiltonian function is differentiated with respect to control variable $\mu$ resulting in
\beq%\begin{equation}
\frac{\partial H_0}{\partial \mu}=\lambda_3-\mu.
\eeq%\end{equation}
Then a compact way of writing the optimal control $\mu$ is 
\begin{equation}
\label{TYC_1_opt_1}
\mu^{*}(t)=\max(0, \lambda_3).
\end{equation}
The following theorem encapsulates the above.
\begin{theorem}
An optimal control $\mu^* \in U_0$ for the system \eqref{TYC_1} that maximizes the objective functional $J$ is characterized by \eqref{TYC_1_opt_1}.
\end{theorem}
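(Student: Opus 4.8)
The plan is to apply Pontryagin's Maximum Principle. Theorem~\ref{thm_tyc_1_oct} already guarantees an optimal control $\mu^{*}\in U_0$, and under the standing restrictions on the initial data and on the size of $\mu$ (cf.\ the remark above) the associated state trajectory $(f,m,s)$ stays positive and bounded on $[0,T]$; hence the right-hand side of \eqref{TYC_1} is continuously differentiable in $(f,m,s,\mu)$ along this trajectory and the hypotheses of the maximum principle hold. Introduce the adjoint variables $\lambda_1,\lambda_2,\lambda_3$ defined by differentiating $-H_0$ (with $H_0$ as in \eqref{TYC_1_OCT_ham}) with respect to $f,m,s$ respectively, together with the terminal conditions $\lambda_1(T)=\lambda_2(T)=\lambda_3(T)=0$; these are exactly the adjoint system and transversality conditions displayed above.

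Next I would use the pointwise maximization statement of the principle: for almost every $t\in[0,T]$ the value $\mu^{*}(t)$ maximizes the map $\mu\mapsto H_0(f,m,s,\mu,\lambda_1,\lambda_2,\lambda_3)$ over the admissible range $0\le\mu<\infty$. Since $f'$ and $m'$ do not involve $\mu$ and $\lambda_3 s'=\lambda_3(\mu-\delta s)$, the only $\mu$-dependence in $H_0$ is through $-\frac{1}{2}\mu^2+\lambda_3\,\mu$, a strictly concave parabola in $\mu$ that tends to $-\infty$ as $\mu\to+\infty$; its maximum over $[0,\infty)$ is therefore attained at a unique point. Solving $\partial H_0/\partial\mu=\lambda_3-\mu=0$ yields the unconstrained critical point $\mu=\lambda_3$: if $\lambda_3(t)\ge 0$ this point lies in $U_0$ and is the maximizer; if $\lambda_3(t)<0$ the parabola is strictly decreasing on $[0,\infty)$, so the maximizer is the boundary point $\mu=0$. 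In both cases $\mu^{*}(t)=\max\bigl(0,\lambda_3(t)\bigr)$, which is \eqref{TYC_1_opt_1}.

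The one point needing care is that the control set $[0,\infty)$ is unbounded, so one must confirm both that the maximum principle is applicable and that the characterized control is genuinely admissible. This follows a posteriori: $\lambda_3$ is continuous on the compact interval $[0,T]$, hence bounded, so $\mu^{*}=\max(0,\lambda_3)$ is a bounded measurable function belonging to $U_0$; and the standing assumptions on the data keep the state in the regime where the coupled state--adjoint system is well-posed (no finite-time blow-up), so the argument never leaves the range of validity. Everything else is the routine concavity computation already recorded in the lines preceding the theorem, so the characterization \eqref{TYC_1_opt_1} follows.
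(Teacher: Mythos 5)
Your proposal is correct and follows essentially the same route as the paper: Pontryagin's maximum principle with the Hamiltonian $H_0$, the stated adjoint system and transversality conditions, and the observation that the $\mu$-dependence of $H_0$ is the concave expression $-\tfrac{1}{2}\mu^2+\lambda_3\mu$, whose maximizer over $[0,\infty)$ is $\max(0,\lambda_3)$, i.e.\ \eqref{TYC_1_opt_1}. The only difference is that you spell out the boundary case $\lambda_3<0$ and the admissibility of $\mu^*$ (boundedness of $\lambda_3$ on $[0,T]$) explicitly, which the paper leaves implicit.
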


%%%%%%%%%%%%%%%%%%%%%%%%%%%%%%%%%%%%%%%%%%%%%%%%%%%

\section{Mirroring TYC startegy via Harvesting}
\label{PHM}
A key issue in the implementation of the TYC strategy is the ``production'' of Trojan YY males. 
%This complication renders difficulty in obtaining experimental data.  

\begin{remark}
Here we present a model that can be reared effectively in the laboratory, while mirroring the TYC strategy - but that does not require YY males/females.  
Essentially the TYC strategy works by reducing the number of females in each generation, whilst increasing the number of males. We implement this in the current model where we remove a specified fraction of females whilst adding in males.  This strategy is called {\em female harvesting male stocking} (FHMS) and attempts to mirror the manipulation of the sex-ratio via the TYC strategy. We will introduce several forms of harvesting/stocking. 
\end{remark}

Consider the following model (We also note this model as Model 1)
\begin{equation}
\label{TYC_3}
\begin{split}
& \frac{df}{dt}=\frac{1}{2}fm\beta L-\delta f-\eta_1 f,\\
& \frac{dm}{dt}=\frac{1}{2}fm\beta L-\delta m+\eta_2 m,
\end{split}
\end{equation}
with $L=1-\frac{f+m}{K}.$ Here, $\eta_1$ and $\eta_2$ are non-negative parameters the specify the removal rate of females and addition rate of males, respectively. It is assumed that $\delta>\eta_2$ if $\eta_2 \not = 0$.  

\subsection{Equilibria and Stability Analysis}

An equilibrium of $f^*=0$ of this system requires $m^*=0$.  It is easy to verify that the trivial equilibrium $(0,0)$ is locally stable since both eigenvalues of the linearized systems are negative.

Let $f^*>0$ and consider the three cases:

\begin{enumerate}
\item $\eta_1>0$ and $\eta_2=0$,
\item $\eta_1=0$ and $\eta_2>0$,
\item $\eta_1>0$ and $\eta_2>0$.
\end{enumerate}

\subsubsection{Case 1: $\eta_1>0, \eta_2=0$.} In this situation the equilibrium of \eqref{TYC_3} is 
\beq%\begin{equation}
m^*=\left(1+\frac{\eta_1}{\delta}\right)f^*
\eeq%\end{equation}
where $f^*$ satisfies
\beq%\begin{equation}
\frac{1}{2}\left(2+\frac{\eta_1}{\delta}\right)\beta f^2-\frac{1}{2}\beta K f+K\delta=0.
\eeq%\end{equation}
Define
\beq%\begin{equation}
\Delta=(\frac{1}{2}\beta K)^2-2(2+\frac{\eta_1}{\delta})\beta K\delta.
\eeq%\end{equation}
If $\Delta=0$, that is, $\beta K=8(2\delta+\eta_1)$, then $f^*=\frac{K}{2(2+\frac{\eta_1}{\delta})}=\frac{4\delta}{\beta} $
and $m^*=\frac{4(\delta+\eta_1)}{\beta}$. In such a case, the corresponding characteristic equation is 
\begin{equation}
\label{chara_3_1}
\lambda^2+\frac{2\delta(\delta+\eta_1)\lambda}{2\delta+\eta_1}=0.
\end{equation}
Since $0$ is a root of \eqref{chara_3_1} then by Routh Hurwitz Criterion, $\left(\frac{4\delta}{\beta},\frac{4(\delta+\eta_1)}{\beta}\right)$ is not stable. 

If $\Delta>0$, that is, $\beta K>8(2\delta+\eta_1)$, then we have the equilibrium solutions $(f^*_+, m^*_+)$ and $(f^*_-, m^*_-)$ where
\beqn%\begin{eqnarray}
f^*_{\pm}&=&\frac{ (\beta K \pm \sqrt{\beta K (\beta K-16\delta-8\eta_1} )\delta}{2(2\delta+\eta_1)\beta},\\
m^*_{\pm}&=&\frac{(\delta+\eta_1)(\beta K \pm \sqrt{\beta K(\beta K-16\delta-8\eta_1)})}{2(2\delta+\eta_1)\beta}.
\eeqn%\end{eqnarray}
Notice that both $f^*_+$ and $f^*_-$ are positive. The characteristic equation about $(f^*_+,m^*_+)$ is given by 
\begin{equation}
\label{equli_4_1}
\lambda^2+k_{41}\lambda+k_{40}=0,
\end{equation}
where 
\beqn%\begin{eqnarray}
\label{k_41}
k_{41}&=&\delta\frac{(\delta+\eta_1)\sqrt{K\beta(K\beta-16\delta-8\eta_1)}}{2(2\delta+\eta_1)^2}+\delta \frac{K\beta \delta+K\beta \eta_1-8\delta^2-12\delta \eta_1-4\eta_1^2}{2(2\delta+\eta_1)^2}, \\
\label{k_40}
k_{40}&=&\delta \frac{(\delta+\eta_1)\sqrt{K\beta(K\beta-16\delta-8\eta_1)}}{4(2\delta+\eta_1)}+\delta \frac{K\beta \delta+K\beta \eta_1-16 \delta^2-24 \delta \eta_1-8\eta_1^2}{4(2\delta+\eta_1)}.
\eeqn%\end{eqnarray}
By Routh Hurwitz criterion, the system with characteristic equation \eqref{equli_4_1} is locally stable if and only if both coefficients satisfy $k_{4i}>0$ for $i=0,1$. Since $\beta K>8(2\delta+\eta_1)$, then $\sqrt{K\beta(K\beta-16\delta-8\eta_1)}>0$.  Likewise,
\beqn%\begin{eqnarray}
& & K\beta \delta+K\beta \eta_1-8 \delta^2-12\delta \eta_1-4\eta_1^2 \\
& &\quad > 8 \delta(2\delta+\eta_1)+8\eta_1 (2\delta+\eta_1)-8\delta^2-12\delta \eta_1-4\eta_1^2\\
& &\quad = 16\delta^2+8\delta \eta_1+16\eta_1 \delta+8\eta_1^2-8\delta^2-12\delta \eta_1-4\eta_1^2\\
& &\quad = 8\delta^2+12\delta \eta_1+4\eta_1^2 > 0.
\eeqn%\end{eqnarray}
Similarly,
\beqn%\begin{eqnarray}
&& K\beta \delta+K\beta \eta_1-16\delta^2-24\delta \eta_1-8\eta_1^2 \\
&&\quad > 8 \delta(2\delta+\eta_1)+8\eta_1 (2\delta+\eta_1)-16\delta^2-24\delta \eta_1-8\eta_1^2\\
&&\quad =16\delta^2+8\delta \eta_1+16\eta_1 \delta+8\eta_1^2-16\delta^2-24\delta \eta_1-8\eta_1^2 =0.
\eeqn%\end{eqnarray}
This implies that $k_{41}>0$ and $k_{40}>0$.  Subsequently, the equilibrium $(f^*_+,m^*_+)$ is locally stable. Now consider the characteristic equation about $(f^*_-,m^*_-)$, namely,
\beq%\begin{equation}
\label{chara_5_1}
\lambda^2+k_{51}\lambda+k_{50}=0.
\eeq%\end{equation}
with
\beqn%\begin{eqnarray}
%\label{k_51}
k_{51}&=&\delta\frac{K\beta \delta+K\beta \eta_1-8\delta^2-12\delta \eta_1-4\eta_1^2-(\delta+\eta_1)\sqrt{K\beta(K\beta-16\delta-8\eta_1)}}{2(2\delta+\eta_1)^2},\\
%\label{k_50}
k_{50}&=&\delta \frac{K\beta \delta+K\beta \eta_1-16 \delta^2-24 \delta \eta_1-8\eta_1^2-(\delta+\eta_1)\sqrt{K\beta(K\beta-16\delta-8\eta_1)}}{4(2\delta+\eta_1)}.
\eeqn%\end{eqnarray}
The signs of $k_{51}$ and $k_{50}$ need to be known to quantify the stability of the equilibrium solution. Firstly, consider the numerator of $k_{50}$,
\beqn%\begin{eqnarray}
&& K\beta \delta+K\beta \eta_1-16 \delta^2-24 \delta \eta_1-8\eta_1^2-(\delta+\eta_1)\sqrt{K\beta(K\beta-16\delta-8\eta_1)}\\
%&&\quad =(\delta+\eta_1)K\beta-(\delta+\eta_1)(16\delta+8\eta_1)-(\delta+\eta_1)\sqrt{K\beta(K\beta-16\delta-8\eta_1)},\\
&&\quad =(\delta+\eta_1)(K\beta-(16\delta+8\eta_1))-(\delta+\eta_1)\sqrt{K\beta(K\beta-16\delta-8\eta_1)}.
\eeqn%\end{eqnarray}
To compare $(\delta+\eta_1)(K\beta-(16\delta+8\eta_1))$ and $(\delta+\eta_1)\sqrt{K\beta(K\beta-16\delta-8\eta_1)}$, it is enough to compare $(K\beta-(16\delta+8\eta_1))^2$ and $(\sqrt{K\beta(K\beta-16\delta-8\eta_1)})^2$.  Namely,
\beq%\begin{equation}
\begin{split}
& (K\beta-(16\delta+8\eta_1))^2-(\sqrt{K\beta(K\beta-16\delta-8\eta_1)})^2\\
&\quad =(K\beta)^2-2K\beta(16\delta+8\eta_1)+(16\delta+8\eta_1)^2-(K\beta)^2+K\beta(16\delta+8\eta_1)\\
&\quad =(16\delta+8\eta_1)^2-K\beta(16\delta+8\eta_1)\\
&\quad =(16\delta+8\eta_1)(16\delta+8\eta_1-K\beta)<0.
\end{split}
\eeq%\end{equation}
This is less than zero because $\beta K>8(2\delta+\eta_1).$ Therefore, $k_{50}<0$ and $(f^*_-,m^*_-)$ is unstable. 

Lastly, if $\Delta<0$, that is, $\beta K< 8(2\delta+\eta_1)$, then the only equilibrium is trivial which is locally stable.

\subsubsection{Case 2: $\eta_1=0, \eta_2>0$.}  In the case of $\eta_1=0, \eta_2> 0$, the model simplifies to
\begin{equation}
\label{TYC_3_2}
\begin{split}
& \frac{df}{dt}=\frac{1}{2}fm\beta L-\delta f,\\
& \frac{dm}{dt}=\frac{1}{2}fm\beta L-\delta m+\eta_2 m,
\end{split}
\end{equation}
which is symmetric to the case $\eta_1>0, \eta_2=0$ if we replace $\eta_2$ by $-\eta_1$. Therefore, the following corollary can be developed. 
\begin{corollary} The boundary equilibrium $(0,0)$ is locally stable. There are 3 cases for the interior equilibria, 
\begin{enumerate}[label=(\roman*)]
\item  If $\beta K= 8(2\delta-\eta_2)$, the equilibrium of \eqref{TYC_3_2} is $\left(\frac{4(\delta-\eta_2)}{\beta}, \frac{4\delta}{\beta}\right)$ and it is locally stable.

\item If $\beta K>8(2\delta-\eta_2)$, the equilibria of \eqref{TYC_3_2} are\\

 \quad \quad $\left(\frac{(\delta-\eta_2)(\beta K \pm \sqrt{\beta K(\beta K-16\delta+8\eta_2)})}{2(2\delta+\eta_1)\beta}, \frac{ (\beta K \pm \sqrt{\beta K (\beta K-16\delta+8\eta_2} )\delta}{2(2\delta-\eta_2)\beta}\right)$ which the postive and negative branches are locally stable and unstable, respectively.%Also, we have \\
 %
%$(\frac{(\delta-\eta_2)(\beta K + \sqrt{\beta K(\beta K-16\delta+8\eta_2)})}{2(2\delta+\eta_1)\beta}, \frac{ (\beta K + \sqrt{\beta K (\beta K-16\delta+8\eta_2} )\delta}{2(2\delta-\eta_2)\beta})$ is locally stable and \\
%
%$(\frac{(\delta-\eta_2)(\beta K - \sqrt{\beta K(\beta K-16\delta+8\eta_2)})}{2(2\delta+\eta_1)\beta}, \frac{ (\beta K - \sqrt{\beta K (\beta K-16\delta+8\eta_2} )\delta}{2(2\delta-\eta_2)\beta})$ is unstable. 

\item If $\beta K<8(2\delta-\eta_2)$, the only equilibrium of \eqref{TYC_3_2} is $(0,0)$.

\end{enumerate}
 
\end{corollary}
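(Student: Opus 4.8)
The plan is to use the symmetry between \eqref{TYC_3_2} and the Case~1 system \eqref{TYC_3} that was noted just before the corollary, and to transcribe the conclusions of Case~1 verbatim. First I would make the dictionary precise. Because the logistic factor $L=1-\frac{f+m}{K}$ is symmetric in $f$ and $m$, interchanging the names of the two state variables in \eqref{TYC_3_2} produces the system
\[
\frac{df}{dt}=\tfrac12 fm\beta L-(\delta-\eta_2)f,\qquad \frac{dm}{dt}=\tfrac12 fm\beta L-\delta m,
\]
which is exactly \eqref{TYC_3} with $\eta_2=0$ and with $\eta_1$ replaced by $-\eta_2$. Consequently every equilibrium, Jacobian entry, characteristic polynomial and eigenvalue computed in Case~1 carries over under the substitution $\eta_1\mapsto-\eta_2$ once the two coordinates are swapped: the trichotomy $\Delta=0,\ \Delta>0,\ \Delta<0$ becomes $\beta K=8(2\delta-\eta_2),\ \beta K>8(2\delta-\eta_2),\ \beta K<8(2\delta-\eta_2)$, the $\Delta=0$ equilibrium $\bigl(\tfrac{4\delta}{\beta},\tfrac{4(\delta+\eta_1)}{\beta}\bigr)$ becomes $\bigl(\tfrac{4(\delta-\eta_2)}{\beta},\tfrac{4\delta}{\beta}\bigr)$, and the pair $(f^*_\pm,m^*_\pm)$ becomes the pair of interior equilibria displayed in (ii).

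Next I would isolate the two places where the standing assumption $0<\eta_2<\delta$ (males stocked slower than they die) is actually used. The Jacobian of \eqref{TYC_3_2} at $(0,0)$ is diagonal with entries $-\delta$ and $\eta_2-\delta$, so $(0,0)$ is locally stable precisely because $\delta>\eta_2$. For the interior equilibria in (ii), positivity of the components and meaningfulness of the square roots require $2\delta-\eta_2>0$, $\delta-\eta_2>0$ and $\beta K-16\delta+8\eta_2\ge0$; the first two follow from $0<\eta_2<\delta$ and the third is the case hypothesis $\beta K\ge 8(2\delta-\eta_2)$. That same hypothesis also gives $\beta K>\sqrt{\beta K(\beta K-16\delta+8\eta_2)}$, so the negative branch likewise has positive components.

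It then remains to re-run the Routh--Hurwitz sign computations of Case~1 after the substitution. For $\beta K=8(2\delta-\eta_2)$, \eqref{chara_3_1} becomes $\lambda^2+\frac{2\delta(\delta-\eta_2)\lambda}{2\delta-\eta_2}=0$, which still has the root $0$, so this equilibrium is not stable, giving (i). For the positive branch, the lower estimates on $k_{41}$ and $k_{40}$ established in Case~1 are each of the form ``numerator $>$ (an expression that vanishes at $\beta K=8(2\delta+\eta_1)$)''; under $\eta_1\mapsto-\eta_2$ these become ``numerator $>$ (an expression that vanishes at $\beta K=8(2\delta-\eta_2)$)'', where the step of clearing the factor $\delta-\eta_2$ without reversing the inequality is exactly where $\delta>\eta_2$ is needed. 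Hence $k_{41},k_{40}>0$ and the positive branch is locally stable. For the negative branch, the quantity whose sign decides $k_{50}$ in \eqref{chara_5_1} becomes $(16\delta-8\eta_2)\bigl(16\delta-8\eta_2-\beta K\bigr)$, which is negative under $\beta K>8(2\delta-\eta_2)$, so $k_{50}<0$ and the branch is unstable; this completes (ii). Finally, when $\beta K<8(2\delta-\eta_2)$ the only equilibrium is $(0,0)$, already shown stable, which is (iii).

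The work here is bookkeeping rather than new ideas, and the one thing to watch is that every strict inequality proved in Case~1 under the sign condition $\eta_1\ge0$ must still hold after the replacement $\eta_1\mapsto-\eta_2$. At each such point it is precisely the hypothesis $\delta>\eta_2$ that rescues the argument, which also explains why that hypothesis was imposed on the model in the first place; absent it, both the stability of $(0,0)$ and the positivity and stability of the interior branch can fail.
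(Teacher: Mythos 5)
Your route is the paper's own route: the paper offers no proof of this corollary beyond the remark that \eqref{TYC_3_2} is symmetric to Case 1 under the replacement $\eta_1\mapsto-\eta_2$ (with the roles of $f$ and $m$ interchanged), and you carry out exactly that transcription, correctly isolating the two places where the standing hypothesis $\delta>\eta_2$ is needed: the stability of $(0,0)$ (Jacobian eigenvalues $-\delta$ and $\eta_2-\delta$) and the sign-preserving multiplication by $\delta-\eta_2$ in the transcribed $k_{41},k_{40}$ lower bounds. Your treatment of (ii) and (iii) is sound, apart from the fact that you silently endorse the displayed formula in (ii), whose $f$-coordinate denominator should read $2(2\delta-\eta_2)\beta$ rather than the leftover $2(2\delta+\eta_1)\beta$.

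Item (i), however, is a genuine problem. The corollary asserts that $\bigl(\tfrac{4(\delta-\eta_2)}{\beta},\tfrac{4\delta}{\beta}\bigr)$ is \emph{locally stable}, but your transcription of \eqref{chara_3_1} yields the characteristic equation $\lambda^2+\tfrac{2\delta(\delta-\eta_2)}{2\delta-\eta_2}\lambda=0$, whose roots are $0$ and a negative number, and you then conclude the equilibrium is ``not stable, giving (i)''. That does not prove (i) as stated; it argues (if anything) its negation, exactly as the paper's Case 1 declared the analogous point $\bigl(\tfrac{4\delta}{\beta},\tfrac{4(\delta+\eta_1)}{\beta}\bigr)$ unstable. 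So either you should state explicitly that the symmetry argument forces the conclusion ``not (asymptotically) stable'' and that (i) as printed is inconsistent with Case 1, or, if one truly wants to settle stability at this borderline equilibrium, the linearization is inconclusive because of the zero eigenvalue and a center-manifold (saddle-node) analysis is required --- something neither you nor the paper supplies. As written, your argument cannot establish part (i) of the statement by this method, and the sentence ``giving (i)'' papers over that gap.
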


\subsubsection{Case 3: $\eta_1>0, \eta_2>0$.} In the case of $\eta_1>0, \eta_2>0$, the equilibrium of model \eqref{TYC_3} is given by
\beq%\begin{equation}
m^*=\frac{\delta+\eta_1}{\delta-\eta_2}f^*,
\eeq%\end{equation}
where $f^*$ satisfies 
\beq%\begin{equation}
(-2\beta \delta-\beta \eta_1+\beta \eta_2) f^2+(K\beta\delta-K\beta \eta_2) f-2K\delta^2+4K\delta \eta_2-2K\eta_2^2=0.
\eeq%\end{equation}

If $\beta K=8 (2\delta+\eta_1-\eta_2)$, then $f^*=\frac{K\beta(\delta-\eta_2)}{2\beta(2\delta+\eta_1-\eta_2)}$ and $m^*=\frac{K\beta(\delta+\eta_1)}{2\beta(2\delta+\eta_1-\eta_2)}$. The corresponding characteristic equation is 
\beq%\begin{equation}
\lambda\left(\lambda+\frac{2(\delta-\eta_2)(\delta+\eta_1)}{2\delta+\eta_1-\eta_2}\right)=0.
\eeq%\end{equation}
It is easy to see $(\frac{K\beta(\delta-\eta_2)}{2\beta(2\delta+\eta_1-\eta_2)},\frac{K\beta(\delta+\eta_1)}{2\beta(2\delta+\eta_1-\eta_2)})$ is unstable because one of the eigenvalues is $0$. 

If $\beta K>8 (2\delta+\eta_1-\eta_2)$, then we have two positive equilibria, $(f^*_+, m^*_+$ and $(f^*_-, m^*_-)$, where
\beq%\begin{equation}
\begin{split}
& f^*_ \pm=\frac{(K\beta \pm \sqrt{K \beta(K\beta-16 \delta-8\eta_1+8\eta_2)})(\delta-\eta_2)}{2\beta(2\delta+\eta_1-\eta_2)},\\
& m^*_\pm=\frac{(K\beta \pm \sqrt{K \beta(K\beta-16 \delta-8\eta_1+8\eta_2)})(\delta+\eta_1)}{2\beta(2\delta+\eta_1-\eta_2)}.
\end{split}
\eeq%\end{equation}
The characteristic equation about $(f^*_+,m^*_+)$ is 
\beq%\begin{equation}
\label{chara_6}
\lambda^2+k_{61}\lambda+k_{60}=0
\eeq%\end{equation}
with
\beq%\begin{equation}
\begin{split}
k_{61}=& \frac{-8\delta^3-12\delta^2 \eta_1+12\delta^2 \eta_2-4\delta\eta_1^2+16\delta \eta_1 \eta_2-4\delta \eta_2^2+4\eta_1^2\eta_2-4\eta_1 \eta_2^2}{2(2\delta+\eta_1-\eta_2)^2}\\
&+ \frac{\sqrt{K\beta (K\beta-16\delta-8\eta_1+8\eta_2)}(\delta^2+\delta \eta_1-\delta \eta_2-\eta_1 \eta_2)}{2(2\delta+\eta_1-\eta_2)^2}\\
&+\frac{K\beta(\delta^2+\delta \eta_1-\delta \eta_2-\eta_1 \eta_2)}{2(2\delta+\eta_1-\eta_2)^2}
\end{split}
\eeq%\end{equation}
and
\beq%\begin{equation}
\begin{split}
k_{60}=& \frac{-16\delta^3-24\delta^2 \eta_1+24\delta^2 \eta_2-8\delta\eta_1^2+32\delta \eta_1 \eta_2-8\delta \eta_2^2+8\eta_1^2\eta_2-8\eta_1 \eta_2^2}{4(2\delta+\eta_1-\eta_2)}\\
&+ \frac{\sqrt{K\beta (K\beta-16\delta-8\eta_1+8\eta_2)}(\delta^2+\delta \eta_1-\delta \eta_2-\eta_1 \eta_2)}{4(2\delta+\eta_1-\eta_2)}\\
&+\frac{K\beta(\delta^2+\delta \eta_1-\delta \eta_2-\eta_1 \eta_2)}{4(2\delta+\eta_1-\eta_2)}.
\end{split}
\eeq%\end{equation}
Since $\beta K>8 (2\delta+\eta_1-\eta_2)$ and $\delta>\eta_2$, then 
\beq%\begin{equation}
\begin{split}
&\sqrt{K\beta (K\beta-16\delta-8\eta_1+8\eta_2)}(\delta^2+\delta \eta_1-\delta \eta_2-\eta_1 \eta_2)\\
&\quad =\sqrt{K\beta (K\beta-16\delta-8\eta_1+8\eta_2)}(\delta+\eta_1)(\delta-\eta_2) > 0.
\end{split}
\eeq%\end{equation}
Likewise, the remaining two terms in the numerator of $k_{61}$ can be combined and  shown to be greater than zero, namely,
\beq%\begin{equation}
\begin{split}
%&-8\delta^3-12\delta^2 \eta_1+12\delta^2 \eta_2-4\delta\eta_1^2+16\delta \eta_1 \eta_2-4\delta \eta_2^2+4\eta_1^2\eta_2-4\eta_1 \eta_2^2 \\
%&+K\beta(\delta^2+\delta \eta_1-\delta \eta_2-\eta_1 \eta_2)\\
&-8(\delta+\eta_1)(\delta-\eta_2)(\delta+\frac{1}{2}\eta_1-\frac{1}{2}\eta_2)+K\beta(\delta+\eta_1)(\delta-\eta_2)\\
&\quad = (\delta+\eta_1)(\delta-\eta_2)(K\beta-8(\delta+\frac{1}{2}\eta_1-\frac{1}{2}\eta_2))\\
&\quad > (\delta+\eta_1)(\delta-\eta_2)(K\beta-16(\delta+\frac{1}{2}\eta_1-\frac{1}{2}\eta_2))\\
&\quad =(\delta+\eta_1)(\delta-\eta_2)(K\beta-8 (2\delta+\eta_1-\eta_2)) > 0.
\end{split}
\eeq%\end{equation*}
Therefore $k_{61}>0$.  In addition, since
\beq%\begin{equation}
\begin{split}
&-16\delta^3-24\delta^2 \eta_1+24\delta^2 \eta_2-8\delta\eta_1^2+32\delta \eta_1 \eta_2-8\delta \eta_2^2+8\eta_1^2\eta_2-8\eta_1 \eta_2^2\\
&+K\beta(\delta^2+\delta \eta_1-\delta \eta_2-\eta_1 \eta_2)\\
&=(\delta+\eta_1)(\delta-\eta_2)(\beta K-8 (2\delta+\eta_1-\eta_2)) >0,
\end{split}
\eeq%\end{equation}
then $k_{60}>0.$ Therefore, $(f^*_+,m^*_+)$ is locally stable. 

As for $(f^*_-,m^*_-)$, the corresponding characteristic equation is 
\beq%\begin{equation}
\label{chara_7}
\lambda^2+k_{71}\lambda+k_{70}=0
\eeq%\end{equation}
where 
\beq%\begin{equation}
\begin{split}
k_{71}=& \frac{-8\delta^3-12\delta^2 \eta_1+12\delta^2 \eta_2-4\delta\eta_1^2+16\delta \eta_1 \eta_2-4\delta \eta_2^2+4\eta_1^2\eta_2-4\eta_1 \eta_2^2}{2(2\delta+\eta_1-\eta_2)^2}\\
&+ \frac{\sqrt{K\beta (K\beta-16\delta-8\eta_1+8\eta_2)}(-\delta^2-\delta \eta_1+\delta \eta_2+\eta_1 \eta_2)}{2(2\delta+\eta_1-\eta_2)^2}\\
&+\frac{K\beta(\delta^2+\delta \eta_1-\delta \eta_2-\eta_1 \eta_2)}{2(2\delta+\eta_1-\eta_2)^2}
\end{split}
\eeq%\end{equation}
and
\beq%\begin{equation}
\begin{split}
k_{70}=& \frac{-16\delta^3-24\delta^2 \eta_1+24\delta^2 \eta_2-8\delta\eta_1^2+32\delta \eta_1 \eta_2-8\delta \eta_2^2+8\eta_1^2\eta_2-8\eta_1 \eta_2^2}{4(2\delta+\eta_1-\eta_2)}\\
&+ \frac{\sqrt{K\beta (K\beta-16\delta-8\eta_1+8\eta_2)}(-\delta^2-\delta \eta_1+\delta \eta_2+\eta_1 \eta_2)}{4(2\delta+\eta_1-\eta_2)}\\
&+\frac{K\beta(\delta^2+\delta \eta_1-\delta \eta_2-\eta_1 \eta_2)}{4(2\delta+\eta_1-\eta_2)}.
\end{split}
\eeq%\end{equation}
The denominator of $k_{70}$ and $k_{71}$ are positive.  The numerator of $k_{70}$ can be simplified to
\begin{equation}
\label{sign_7}
\begin{split}
%& -16\delta^3-24\delta^2 \eta_1+24\delta^2 \eta_2-8\delta\eta_1^2+32\delta \eta_1 %\eta_2-8\delta \eta_2^2+8\eta_1^2\eta_2-8\eta_1 \eta_2^2\\
%& \sqrt{K\beta (K\beta-16\delta-8\eta_1+8\eta_2)}(-\delta^2-\delta \eta_1+\delta %\eta_2+\eta_1 \eta_2)\\
%& K\beta(\delta^2+\delta \eta_1-\delta \eta_2-\eta_1 \eta_2)\\
%=&
(\delta+\eta_1)(\delta-\eta_2)(K\beta-16\delta-8 \eta_1+8 \eta_2-\sqrt{K\beta (K\beta-16\delta-8\eta_1+8\eta_2)}).
\end{split}
\end{equation}
The sign of \eqref{sign_7} depends on $(K\beta-16\delta-8 \eta_1+8 \eta_2)-\sqrt{K\beta (K\beta-16\delta-8\eta_1+8\eta_2)}$.  It is enough to compare $(K\beta-16\delta-8 \eta_1+8 \eta_2)^2$ and $(\sqrt{K\beta (K\beta-16\delta-8\eta_1+8\eta_2)})^2$ because both of them are positive. Therefore, 
\begin{equation*}
\begin{split}
& (K\beta-16\delta-8 \eta_1+8 \eta_2)^2 - (\sqrt{K\beta (K\beta-16\delta-8\eta_1+8\eta_2)})^2 \\
&\quad = (K\beta-16\delta-8 \eta_1+8 \eta_2)(K\beta-16\delta-8 \eta_1+8 \eta_2-K\beta)\\
&\quad = (K\beta-16\delta-8 \eta_1+8 \eta_2)(-16\delta-8 \eta_1+8 \eta_2)\\
&\quad = -8((\delta+\eta_1)+(\delta-\eta_2))(K\beta-16\delta-8 \eta_1+8 \eta_2) <0.
\end{split}
\end{equation*}
This shows that $k_{70}<0$.  Subsequently, $(f^*_-,m^*_-)$ is unstable by Routh Hurwitz criterion. 

The local stability for the equilibria in each case is now understood. The global stability of the trivial equilibrium $(0,0)$ is stated in the following theorem.
\begin{theorem}
\label{thm_global_3}
Consider the model \eqref{TYC_3}, the equilibrium $(0,0)$ is globally asymptotically stable if $\beta K<2\delta+\eta_1- \eta_2.$
\end{theorem}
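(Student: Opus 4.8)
The plan is a Lyapunov argument on the positively invariant first quadrant, using the natural candidate $V(f,m)=f+m$, after first trapping all trajectories in a compact absorbing region. First I would record that the nonnegative quadrant $\{f\ge 0,\ m\ge 0\}$ is positively invariant for \eqref{TYC_3} (since $\dot f=0$ on $\{f=0\}$ and $\dot m=0$ on $\{m=0\}$), and then show that the simplex $\Omega=\{f\ge 0,\ m\ge 0,\ f+m\le K\}$ is positively invariant and globally attracting within that quadrant. Writing $s=f+m$, one has
\beq
\dot s=\beta fmL-(\delta+\eta_1)f-(\delta-\eta_2)m ,
\eeq
so on $\{s=K\}$, where $L=0$, $\dot s=-(\delta+\eta_1)f-(\delta-\eta_2)m\le 0$, which prevents exit; and for $s>K$, where $L<0$, $\dot s\le -(\delta+\eta_1)f-(\delta-\eta_2)m\le -(\delta-\eta_2)s<-(\delta-\eta_2)K<0$, so $s$ drops into $\Omega$ in finite time. (This is where the standing assumption $\delta>\eta_2$ enters; it is also what makes the hypothesis $2\delta+\eta_1-\eta_2>0$ nonvacuous, since $2\delta+\eta_1-\eta_2=(\delta+\eta_1)+(\delta-\eta_2)$.) Hence it is enough to prove attraction to $(0,0)$ for initial data in the compact set $\Omega$, on which $0\le L\le 1$.

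On $\Omega$ I would show that $V=s$ is a strict Lyapunov function. If $f=0$ and $m>0$ then $\dot V=-(\delta-\eta_2)m<0$, and if $m=0$ and $f>0$ then $\dot V=-(\delta+\eta_1)f<0$. For $f,m>0$ in $\Omega$ one has $f\le K$, $m\le K$, $L\le 1$, hence
\beq
\frac{(\delta+\eta_1)f+(\delta-\eta_2)m}{fm}=\frac{\delta+\eta_1}{m}+\frac{\delta-\eta_2}{f}\ge\frac{\delta+\eta_1}{K}+\frac{\delta-\eta_2}{K}=\frac{2\delta+\eta_1-\eta_2}{K}>\beta ,
\eeq
the final inequality being exactly the hypothesis $\beta K<2\delta+\eta_1-\eta_2$. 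Multiplying through by $fm>0$ yields $(\delta+\eta_1)f+(\delta-\eta_2)m>\beta fm\ge\beta fmL$, i.e. $\dot V<0$ on $\Omega\setminus\{(0,0)\}$. Since $V$ is positive definite on $\Omega$ with $V(0,0)=0$, classical Lyapunov theory — or LaSalle's invariance principle, with $\{(0,0)\}$ the only invariant subset of $\{\dot V=0\}$ — gives asymptotic stability of $(0,0)$ with basin of attraction containing $\Omega$; together with the first paragraph this gives global asymptotic stability on $\{f\ge0,\ m\ge0\}$.

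The two nonroutine points are the construction of the absorbing simplex $\Omega$ in the presence of the destabilizing stocking term $+\eta_2 m$ (a priori $m$ could grow, so one must check the flow on $\{s\ge K\}$, which is where $\delta>\eta_2$ is used), and calibrating the estimate on the interaction term so that it lands exactly on the stated threshold. For the latter, the efficient choice is to discard $L$ by $L\le 1$ and then apply the pointwise bound $\frac{\delta+\eta_1}{m}+\frac{\delta-\eta_2}{f}\ge\frac{2\delta+\eta_1-\eta_2}{K}$ on $\Omega$; retaining the factor $L=1-s/K$ rather than bounding it by $1$ would in fact enlarge the admissible range of $\beta K$, so the threshold in the statement should be regarded as one convenient sufficient condition rather than a sharp one. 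Everything else is routine.
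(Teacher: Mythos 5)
Your proposal is correct and follows essentially the same route as the paper: the Lyapunov function $V=f+m$, discarding the logistic factor via $L\le 1$, and bounding $\frac{\delta+\eta_1}{m}+\frac{\delta-\eta_2}{f}$ from below by $\frac{2\delta+\eta_1-\eta_2}{K}$ using $f,m\le K$, which lands exactly on the stated threshold. The only difference is that you additionally verify positive invariance and attraction of the simplex $\{f\ge 0,\ m\ge 0,\ f+m\le K\}$ (where $\delta>\eta_2$ enters) and treat the boundary cases $f=0$ or $m=0$ separately — details the paper's proof takes for granted.
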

\begin{proof}
Consider the Lyapunov function $V = f+m$. It is left to show that $\frac{dV}{dt}<0$ for all $(f,m) \not =(0,0)$.
Consider 
\beq%\begin{equation}
\begin{split}
\frac{dV}{dt}&=\frac{df}{dt}+\frac{dm}{dt}\\
&=fm\beta L-(\delta+\eta_1)f-(\delta-\eta_2)m\\
&\leq fm\beta-(\delta+\eta_1)f-(\delta-\eta_2)m, \quad \mathrm{since} \quad L \leq 1 \\
&\leq [\beta-\frac{1}{m}(\delta+\eta_1)-\frac{1}{f}(\delta-\eta_2)] fm\\
&\leq [\beta-\frac{1}{K}(\delta+\eta_1)-\frac{1}{K}(\delta-\eta_2)] fm.
\end{split}
\eeq%\end{equation}
It is enough to show $\beta-\frac{1}{K}(\delta+\eta_1)-\frac{1}{K}(\delta-\eta_2)<0$. By direct calculation, we have $\beta K<2\delta+\eta_1-\eta_2$, which proves theorem \eqref{thm_global_3}.
\end{proof}

Various other forms of harvesting for FHMS are also tried, but we maintain the same general structure that is females are harvested via a term $-\eta_1 G_1(f)$, whilst males are added to the system via a term $\eta_2 G_2(m)$. The results spanning various forms of $G_{1}, G_{2}$ are relegated to the appendix, section \ref{AppendixB}.

\subsection{Optimal Control Analysis}
The goal of this section is to further investigate controls the female-male system. In particular, assume that the removal rate $\eta_{1}$ or the addition rate $\eta_{2}$ are not known \emph{a priori} and enter the system as time-dependent controls. The responses are over the range $0 \leq \eta_1, \eta_2 \leq 1$. For clarity, the model is restated with the temporal dependence in $\eta_i$,
\begin{equation}
\label{TYC_3_OCT}
\begin{split}
& \frac{df}{dt}=\frac{1}{2}fm\beta L-\delta f-\eta_1(t) f,\\
& \frac{dm}{dt}=\frac{1}{2}fm\beta L-\delta m+\eta_2(t) m.
\end{split}
\end{equation}

Consider the objective function
\beq%\begin{equation}
  J_1(\eta_{1},\eta_{2})= \int^{T}_{0} -(f+m)- \frac{1}{2}(\eta_{1}^{2} +\eta_{2}^{2} ) dt.
\eeq%\end{equation}
subject to the governing equations in \eqref{TYC_3_OCT} and specified initial conditions. Optimal strategies are derived for the following objective function, where we minimize both female and male populations while minimizing the harvesting and addition rates. Optimal controls are searched for within the set $U_1$, namely,
\beq%\begin{equation}
 U_1= \{ (\eta_{1},\eta_{2})|\eta_{i} \ \mbox{measurable}, \  0 \leq \eta_{1} \leq 1, 0 \leq \eta_{2} < \delta, \  t \in [0,T], \ \forall T\}.
\eeq%\end{equation}
The goal is to seek an optimal $(\eta_{1}^{*}, \eta_{2}^{*})$ such that
\beq%\begin{equation}
  J_1(\eta_{1}^{*}, \eta_{2}^{*})= \underset{(\eta_{1},\eta_{2})}{\max} \int^{T}_{0} (-(f+m) - \frac{1}{2}(\eta_1^2+\eta_{2}^{2})  dt
\eeq%\end{equation}
The following existence theorem is stated.

\begin{theorem}
\label{thm_tyc_3_oct}
Consider the optimal control problem \eqref{TYC_3_OCT}. There exists $(\eta_{1}^{*}, \eta_{2}^{*}) \in U_1$ such that
\beq%\begin{equation}
\begin{split}
  J_1(\eta_{1}^{*}, \eta_{2}^{*}) &= \underset{(\eta_{1},\eta_{2})}{\max} \int^{T}_{0} -(f+m) - \frac{1}{2}(\eta_1^2+\eta_{2}^{2})  dt\\
  & = \underset{(\eta_{1},\eta_{2})}{\min} \int^{T}_{0} f+m + \frac{1}{2}(\eta_1^2+\eta_{2}^{2})  dt.\\
  \end{split}
\eeq%\end{equation}
\end{theorem}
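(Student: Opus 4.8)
The plan is to establish existence of an optimal control pair via the standard Fleming--Rishel framework, exactly paralleling the argument used for Theorem \ref{thm_tyc_1_oct}. First I would verify that the set of admissible controls $U_1$ is nonempty and that for each $(\eta_1,\eta_2)\in U_1$ the state system \eqref{TYC_3_OCT} has a unique solution on $[0,T]$ that remains nonnegative and uniformly bounded; boundedness follows from the logistic structure (the total population $f+m$ is controlled by the carrying capacity $K$, using $\eta_2<\delta$ and $\eta_1\ge 0$), and this a priori bound is uniform over all admissible controls. This uniform bound on the states, together with the explicit bound $0\le\eta_1\le 1$, $0\le\eta_2<\delta$ on the controls, is what makes the relevant sets compact in the required sense.

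Next I would check the hypotheses of the Fleming--Rishel existence theorem \cite{FR75}: (i) the admissible control set $U_1$ is convex and closed; (ii) the right-hand side of the state system is bounded by a linear function of the state and control (immediate from the polynomial right-hand sides and the uniform state bound); (iii) the integrand of the objective functional, $-(f+m)-\tfrac12(\eta_1^2+\eta_2^2)$, is concave in $(\eta_1,\eta_2)$ — indeed it is strictly concave, being $-\tfrac12\|(\eta_1,\eta_2)\|^2$ plus a term independent of the controls; and (iv) there exist constants $c_1>0$, $c_2$ and $\rho>1$ such that the integrand is bounded above by $c_2 - c_1(|\eta_1|^2+|\eta_2|^2)^{\rho/2}$, which holds with $\rho=2$ and $c_1=\tfrac12$ since $-(f+m)\le 0$. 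Then I would invoke the theorem to conclude existence of a maximizing pair $(\eta_1^*,\eta_2^*)\in U_1$, and note that the two displayed formulations (max of the negative integrand and min of the positive integrand) are trivially equivalent.

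I expect the only genuine point requiring care — the "main obstacle," though it is mild — to be the uniform boundedness of the state variables over the whole admissible control class, because the remark after the $\mu=0$ analysis warns that TYC-type models can blow up in finite time. Here, however, the stocking term is $+\eta_2(t)m$ with the constraint $\eta_2<\delta$, so adding $\tfrac{df}{dt}+\tfrac{dm}{dt}$ gives $\tfrac{d}{dt}(f+m)=fm\beta L-(\delta+\eta_1)f-(\delta-\eta_2)m\le fm\beta L$, and the logistic factor $L=1-\frac{f+m}{K}$ forces $f+m$ to stay bounded by $\max\{f(0)+m(0),K\}$ for all time and all admissible $(\eta_1,\eta_2)$; a short comparison/invariant-region argument closes this. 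With that bound in hand, the remaining verifications are the routine checks listed above, so the proof can be stated concisely in the same style as the proof of Theorem \ref{thm_tyc_1_oct}.
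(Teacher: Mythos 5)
Your proposal is correct and follows essentially the same route as the paper: the paper's proof likewise invokes the Fleming--Rishel framework \cite{FR75}, deducing existence from the global boundedness of the states (via $f+m\leq K$) and the bounded controls together with concavity of the integrand in $(\eta_1,\eta_2)$. Your version simply spells out the boundedness and coercivity checks that the paper leaves implicit, which is a sound elaboration rather than a different argument.
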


\begin{proof}
Since $f+m \leq K,~ 0 \leq \eta_1 \leq 1$ and $0 \leq \eta_2 \leq \delta \leq 1$, then the compactness of the functional $J$ follows from the global boundedness of the state variables and the controls $\eta_1$ and $\eta_2$. Also the functional $J$ is concave in the both of its arguments $\eta_{1}$ and $\eta_{2}$. This is easily verified via standard application \cite{FR75}. Subsequently, these facts in conjunction provide the existence of an optimal control.
\end{proof}

We use the Pontryagin's maximum principle to derive the necessary conditions on the optimal controls. Consider the Hamiltonian for $J_1$, namely,
\beq%\begin{equation}
\label{OCT_ham}
H_1=-(f+m) - \frac{1}{2}(\eta_1^2+\eta_{2}^{2}) +\lambda_1 f'+\lambda_2 m'.
\eeq%\end{equation}
The Hamiltonian is used to establish a differential equation of the adjoint $\lambda_i, i=1,2.$ That is,
\beq%\begin{equation}
\begin{split}
&\lambda_1'(t)=1-\lambda_1 [\frac{m \beta }{2} (1-\frac{f+m}{K})-\frac{f m \beta}{2K}-\delta-\eta_1]-\lambda_2 [\frac{ m \beta}{2} (1-\frac{f+m}{K})-\frac{ f m \beta}{2K}],\\
&\lambda_2'(t)=1-\lambda_1 [\frac{f \beta }{2} (1-\frac{f+m}{K})-\frac{f m \beta}{2K}]-\lambda_2 [\frac{ f \beta}{2} (1-\frac{f+m}{K})-\frac{ f m \beta}{2K}-\delta+\eta_2].
\end{split}
\eeq%\end{equation}
with the transversality condition of $\lambda_1(T)=\lambda_2(T)=0.$ In consideration of the optimality conditions, the Hamiltonian is differentiated with respect to the control variables $\eta_1$ and $\eta_2$ resulting in
\beq%\begin{equation}
\begin{split}
& \frac{\partial H_1}{\partial \eta_1}=-f\lambda_1-\eta_1,\\
& \frac{\partial H_1}{\partial \eta_2}=m\lambda_2-\eta_2.
\end{split}
\eeq%\end{equation}
We find a characterization of $\eta_1$ by considering three cases,

\begin{enumerate}
\item If $\frac{\partial H_1}{\partial \eta_{1}}<0$ then $\eta_{1}^{*}=0$.  This implies that $-f\lambda_1<0$;
\item If $\frac{\partial H_1}{\partial \eta_{1}}=0$ then $\eta_{1}^{*}=-f\lambda_1$. This implies that $0 \leq - f\lambda_1 \leq 1$;
\item If $\frac{\partial H_1}{\partial \eta_{1}}>0$ then $\eta_{1}^{*}=1$.  This implies that $1<- f\lambda_1$.
\end{enumerate}

Notice the recurrence of the expression $-f\lambda_1$.  This expression is strictly less than 0 when the control is at the lower bound.  In constrast, when the control is at the upper bound, then $-f\lambda_1>1$. Thus a compact way of writing the optimal control $\eta_1$ is 
\begin{equation}
\label{opt_1}
\eta_1(t)^{*}=\min(1, \max(0, -f\lambda_1)).
\end{equation}
Similarly, for $\eta_2$, 
\begin{equation}
\label{opt_2}x
\eta_2(t)^{*}=\min(1, \max(0, m \lambda_2)).
\end{equation}
\begin{theorem}
An optimal control $(\eta_1^*, \eta_2^*) \in U_1$ for the system \eqref{TYC_3} that maximizes the objective functional $J$ is characterized by \eqref{opt_1}-\eqref{opt_2}.
\end{theorem}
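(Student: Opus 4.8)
The plan is to apply Pontryagin's maximum principle, whose hypotheses are in place: by Theorem~\ref{thm_tyc_3_oct} an optimal pair $(\eta_1^*,\eta_2^*)\in U_1$ exists, the state variables $f,m$ are globally bounded (so the right-hand sides of \eqref{TYC_3_OCT} are $C^1$ on the relevant region), and $U_1$ is a product of compact intervals. Consequently, following the standard ODE optimal control machinery (cf.~\cite{FR75}), there exist absolutely continuous adjoint functions $\lambda_1,\lambda_2$ solving the adjoint system $\lambda_i'=-\partial H_1/\partial x_i$ with terminal (transversality) conditions $\lambda_1(T)=\lambda_2(T)=0$ --- precisely the equations already displayed above, obtained by differentiating the Hamiltonian \eqref{OCT_ham} with respect to $f$ and $m$ --- and for almost every $t\in[0,T]$ the optimal control maximizes $H_1$ pointwise over $(\eta_1,\eta_2)\in U_1$.

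The second step is the pointwise maximization, which is elementary because $H_1$ is separably quadratic in the controls: since $f'$ is linear in $\eta_1$ and $m'$ is linear in $\eta_2$, one has $H_1=-\tfrac12\eta_1^2-\lambda_1 f\,\eta_1-\tfrac12\eta_2^2+\lambda_2 m\,\eta_2+(\text{terms independent of }\eta_1,\eta_2)$, so $\partial^2 H_1/\partial\eta_1^2=\partial^2 H_1/\partial\eta_2^2=-1<0$ and each parabola opens downward. The stationarity condition $\partial H_1/\partial\eta_1=-f\lambda_1-\eta_1=0$ gives the unconstrained maximizer $\eta_1=-f\lambda_1$, and downward concavity forces the constrained maximizer over $[0,1]$ to be the truncation $\eta_1^*=\min(1,\max(0,-f\lambda_1))$, which is \eqref{opt_1}; the three-case sign analysis shown just before the theorem is exactly this truncation written out. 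The identical argument applied to $\partial H_1/\partial\eta_2=m\lambda_2-\eta_2$ yields $\eta_2^*=\min(1,\max(0,m\lambda_2))$, i.e.~\eqref{opt_2} (the admissible cap $\delta\le 1$ is consistent with writing the upper bound as $\min(1,\cdot)$ under the standing assumption $\delta<1$).

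The only genuine subtlety --- the step I would treat with the most care --- is ensuring the necessary conditions are valid on the entire interval $[0,T]$: one must check that for every admissible control the solution of \eqref{TYC_3_OCT} stays nonnegative and bounded (by $K$), so that no finite-time blow-up occurs and $H_1$ is smooth where needed. This is where the standing hypothesis $\eta_2<\delta$ and the restriction on initial data (cf.~the Remark following the $\mu=0$ analysis) are used. Everything else --- reading off the adjoint ODEs, imposing the zero terminal conditions, and projecting the unconstrained Hamiltonian-maximizer onto $U_1$ --- is routine, so the theorem follows by assembling these pieces.
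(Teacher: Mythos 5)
Your proposal is correct and follows essentially the same route as the paper: apply Pontryagin's maximum principle using the existence result of Theorem~\ref{thm_tyc_3_oct} and the boundedness of the states, write the Hamiltonian \eqref{OCT_ham}, derive the adjoint equations with zero terminal conditions, and obtain \eqref{opt_1}--\eqref{opt_2} by maximizing the concave quadratic dependence of $H_1$ on $(\eta_1,\eta_2)$, which is exactly the truncation/case analysis the paper carries out before stating the theorem. Your explicit remark on concavity ($\partial^2 H_1/\partial\eta_i^2=-1<0$) and on state positivity/boundedness only makes the same argument slightly more careful, not different in substance.
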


\begin{remark}
The optimal control anlysis of the nonlinear forms of FHMS model is relegated to the appendix, section \ref{AppendixB}.
\end{remark}

\section{Optimal Strategy Comparisons between Classic TYC Model and FHMS Model}
% To be finished
In this section numerical simulations are used to compare the optimal controls for the classic TYC model to the FHMS model.  The wild-type female and male populations are compared for each control. The Guppy fish population data from USGS is utilized to run the simulations with our best fit parameters, that is, $\beta=0.005774, ~\delta=.0.0648,$ and $K=405.0705.$

\begin{table}[H]
\caption{Parameters used for numerical simulations}
\begin{tabular}{|c|l|r|}
\hline
\textbf{Parameter }            & ~~\textbf{Description}       & \textbf{Value} \\ \hline
$ \beta $  & ~~Birth rate        &~~ 0.0057 ~~\\ \hline
$ \delta $ & ~~Death rate        &~~ 0.0648 ~~\\ \hline
$K$                     & ~~Carrying capacity & 405 ~~   \\ \hline
$T$                     & ~~Terminal time     & 200 ~~ \\ \hline
\end{tabular}
\end{table}

\begin{figure}[H]
%\label{fig_opt}
        \begin{minipage}[b]{0.48\linewidth}
            \centering
            \includegraphics[width=\textwidth]{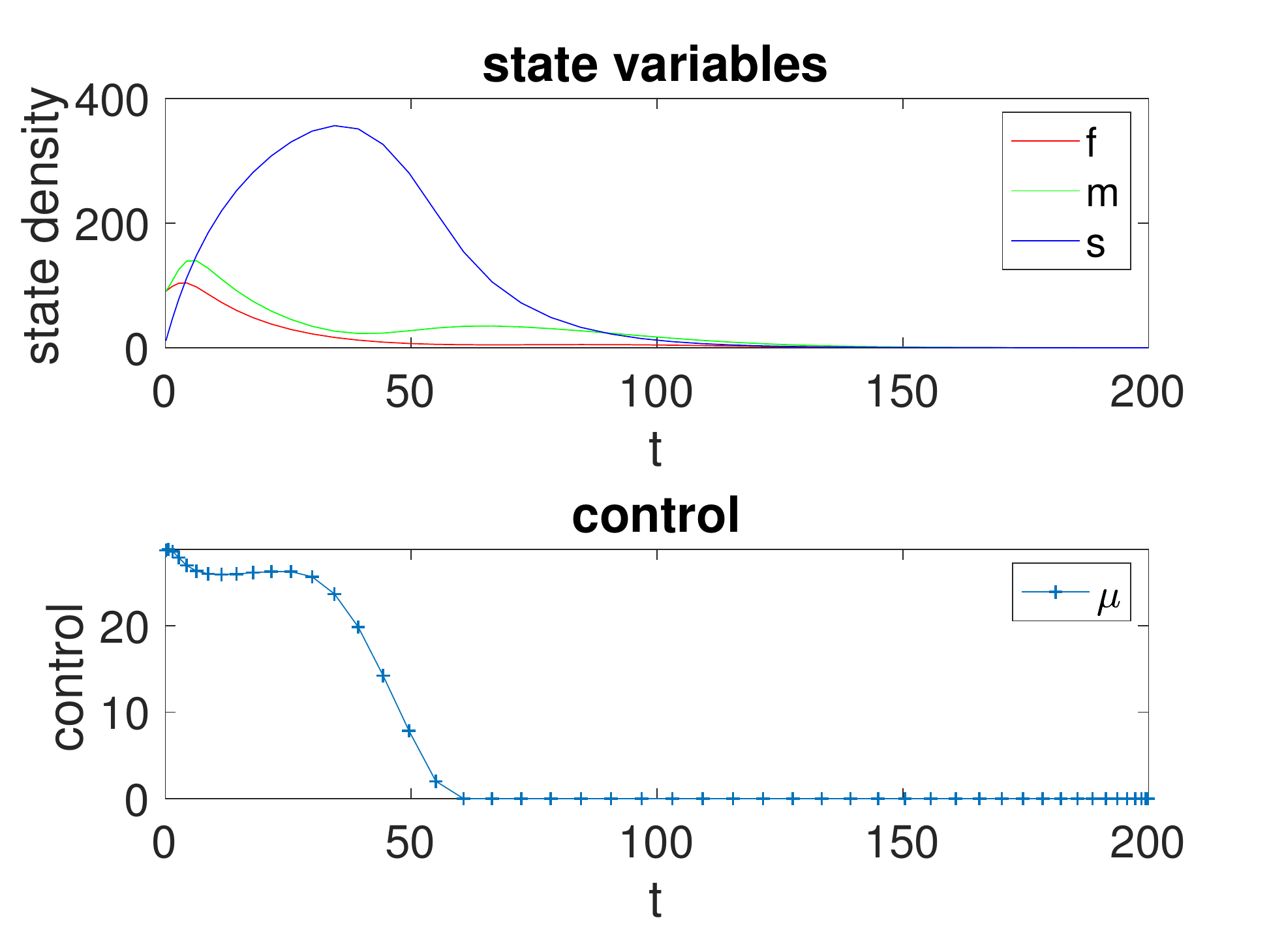}
            \subcaption{ }
             \label{fig:fig_1A}
        \end{minipage}
        \hspace{0.10cm}
        \begin{minipage}[b]{0.480\linewidth}
            \centering
            \includegraphics[width=\textwidth]{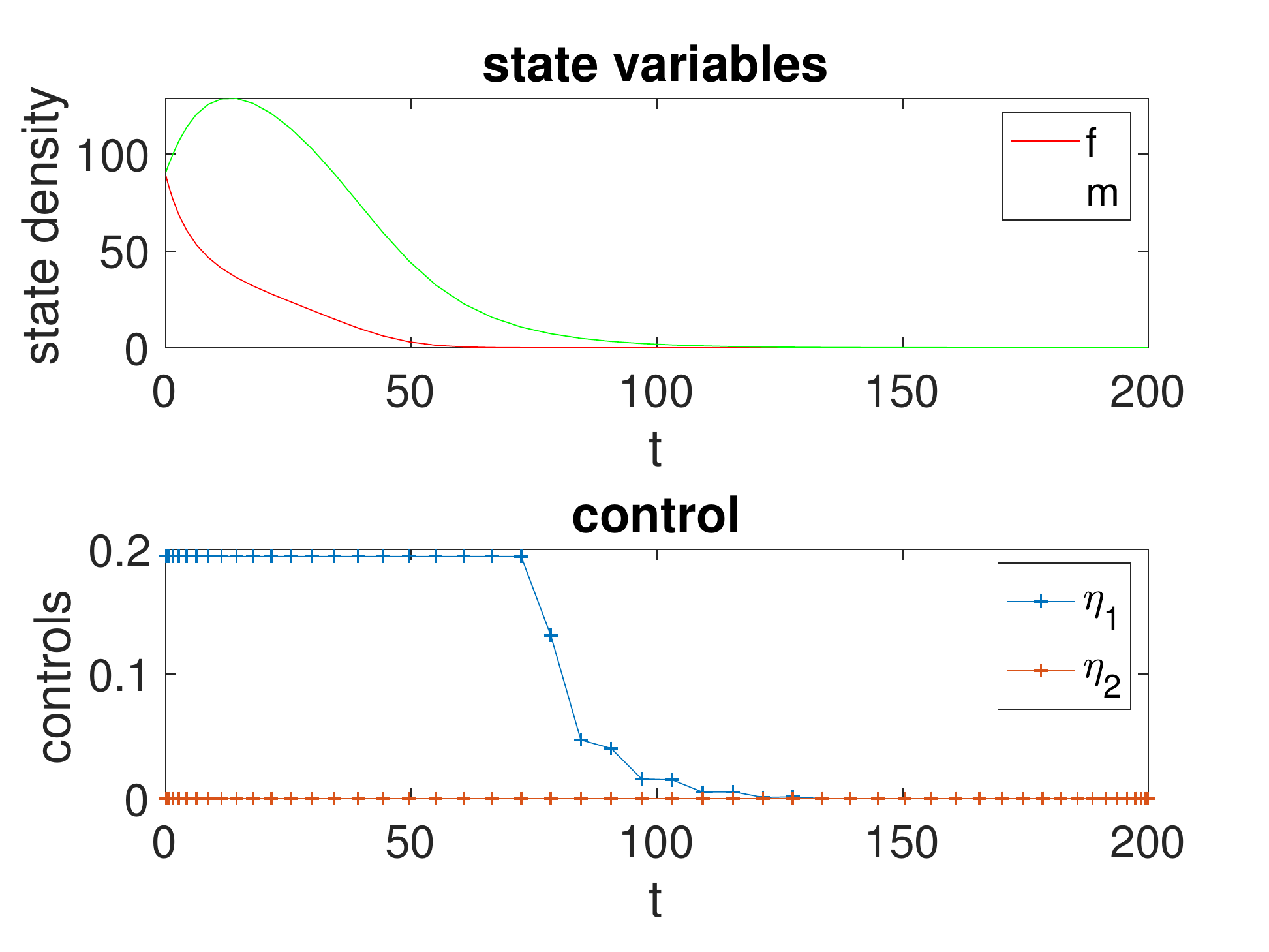}
            \subcaption{ }
             \label{fig:fig_1B}
        \end{minipage}
        %\label{fig_opt}
\caption{(A) Female (red), male (green) and supermale (blue) densities and optimal control $\mu(t)$ in \eqref{TYC_1} change with time $t$. (B) Female (red) and male (green) densities and optimal controls $\eta_{1}(t)$ and $\eta_{2}(t)$ in \eqref{TYC_3_OCT} change with time $t$.  }
\end{figure}

\begin{figure}[H]
%\label{fig_opt}
        \begin{minipage}[b]{0.480\linewidth}
            \centering
            \includegraphics[width=\textwidth]{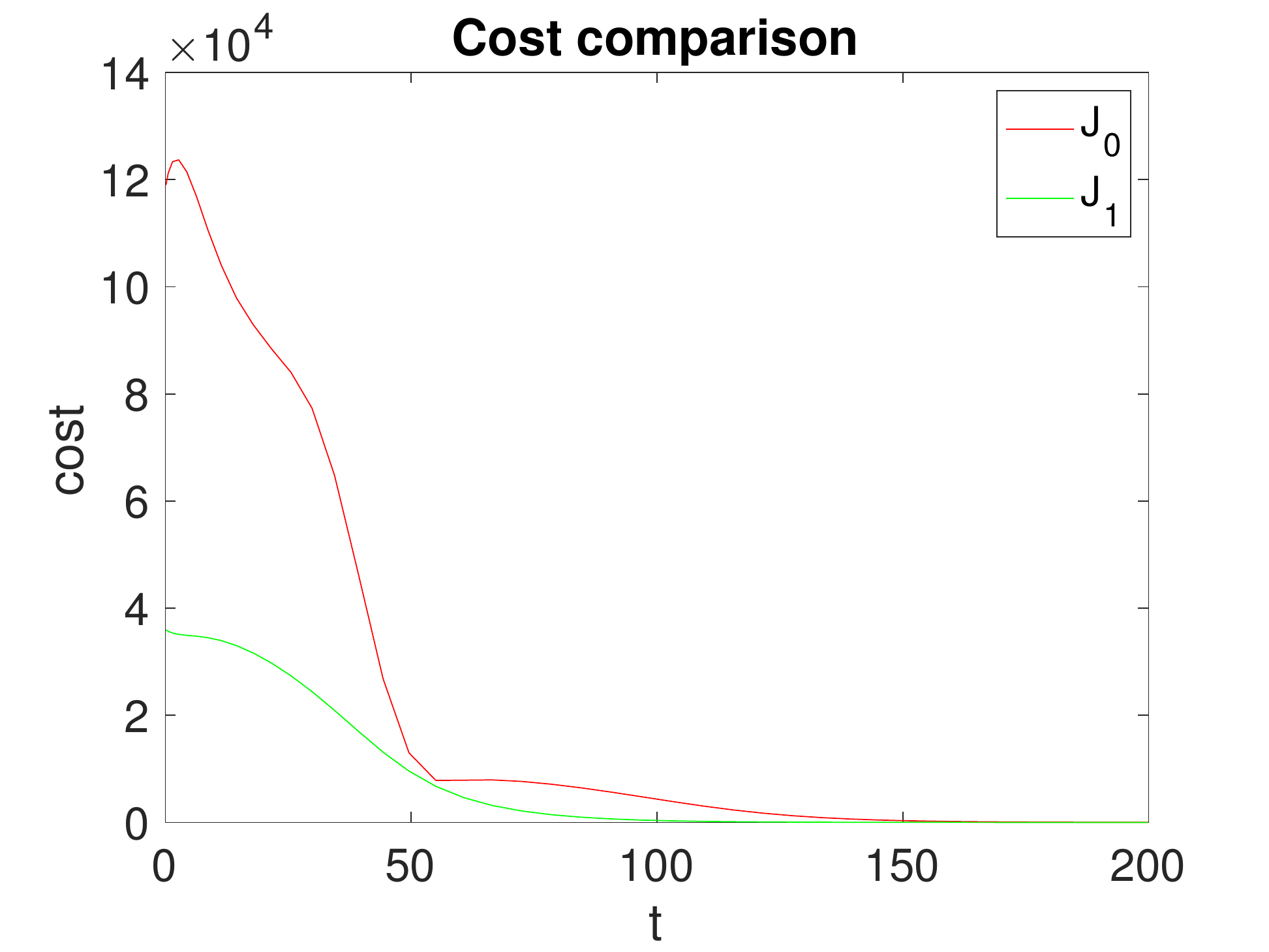}
            \subcaption{ }
             \label{fig:fig6_1}
        \end{minipage}
        \hspace{0.10cm}
        \begin{minipage}[b]{0.480\linewidth}
            \centering
            \includegraphics[width=\textwidth]{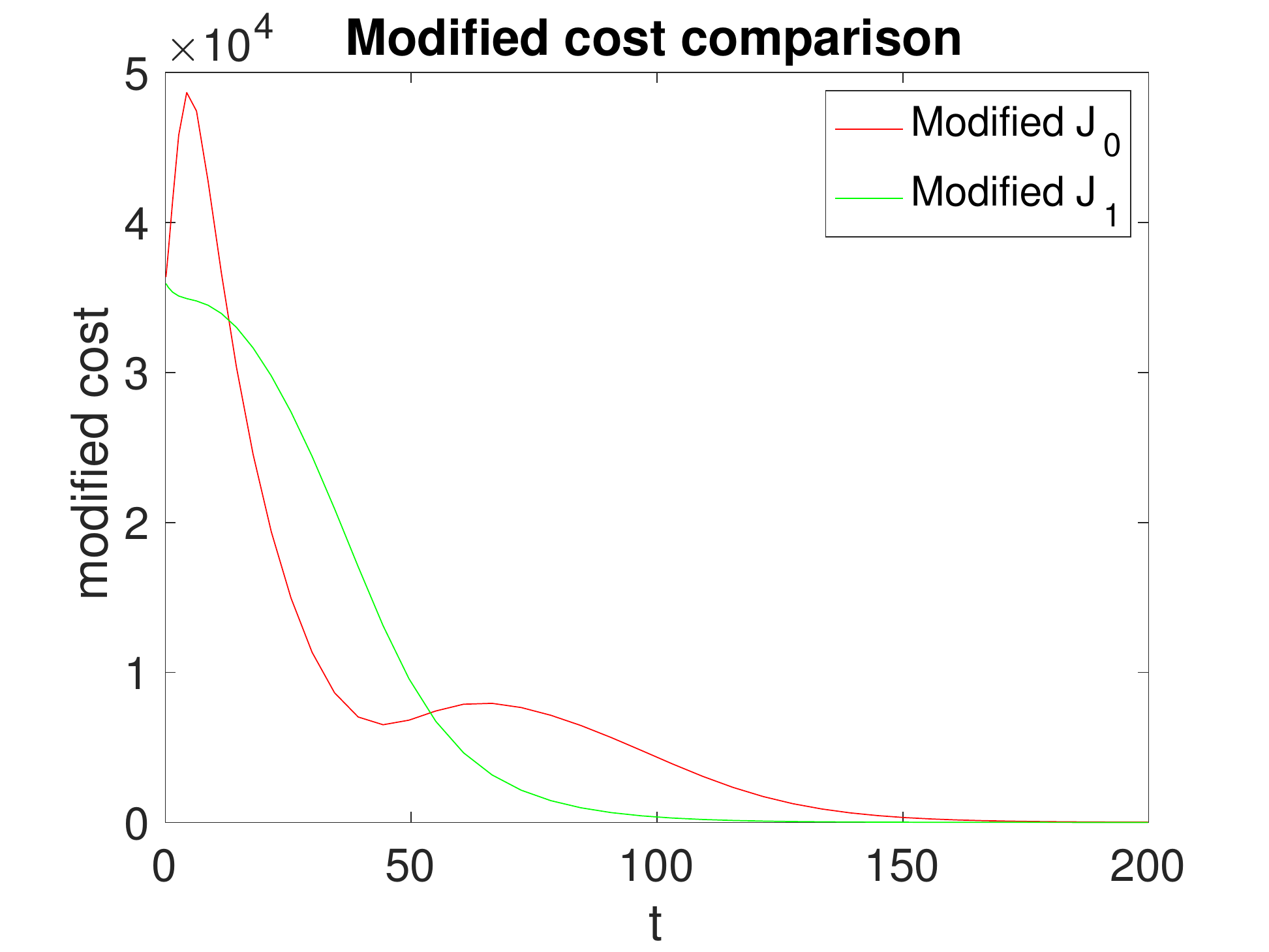}
            \subcaption{ }
             \label{fig:fig6_2}
        \end{minipage}
        %\label{fig_opt}
\caption{(A) In this simulation we look at the objective function towards the optimal implement for each model, we clearly see the cost function of TYC model (Model 0) is much larger than FHMS model (Model 1). (B) Since the constraints and scales for $\mu$ and $\eta_{1}, \eta_{2}$ are different, it's hard to compare the effectiveness for each strategy, so here we also look at the cost deducting the controls, that is, $\int^{T}_{0} f+m$  $dt $ under optimal control.}
\end{figure}

In the classical TYC case, supermales are introduced at high rate for longer time as shown in Fig.~\ref{fig:fig_1A}. Once the females are brought below a particular threshold, the introduction of supermales decreases and finally turns off as extinction nears. Similarly to the TYC strategy, the female removal rate $\eta_1$ keeps high for a certain time to reduce the female population and gradually declines as the entire population approaches extinction.

\begin{remark}
From Fig.~\ref{fig:fig_1B}, we see that introducing males to the entire population is not helpful for the eradication. Note, the optimal control $\eta_{2}$ turns out to be essentially 0.
\end{remark}

From Fig.~\ref{fig:fig6_1}, it is clear that the cost function of FHMS strategy at each time is much less than the classical TYC strategy. We also consider that the range for $\mu (0<\mu<\infty)$ and $\eta_1, \eta_2 (0 \leq \eta_1, \eta_2 \leq 1)$ are different, we calculate the the cost excluding the controls under the optimal control in each model as shown in the Fig.~\ref{fig:fig6_2}. And we get the following result:

%\begin{figure}[H]
%          \centering
%            \includegraphics[width=0.5\textwidth]{cost_comp.eps}
%\caption{In this simulation we look at cost function towards the optimal implement for each model, we clearly see the cost function of TYC model is much larger than pure harvesting model.}
% \label{fig_cost}
%\end{figure} 

\begin{table}[H]
\caption{Comparison of TYC and FHMS strategy results}
\begin{tabular}{|l|r|r|}
\hline
\quad\quad\textbf{Results}             & ~~\textbf{TYC}       & ~~\textbf{FHMS} \\ \hline
~Objective function  & 270820~~        & 84168  \\ \hline
~Cost deducting controls  & 106782~~        & 84156  \\ \hline
~Female population in final time  &  0.0184  & 0  \\ \hline
~Male population in final time &  0.005 & 0.0028  \\ \hline
~Female Approximate Eradication Time (month  )                  & 140 & 60   \\ \hline
~Male Approximate Eradication Time (month)                   & 156 & 110  \\ \hline
\end{tabular}
\end{table}

\begin{remark}
Note the approximate eradication time is defined as the time that the population is less than some $\epsilon$ where $0<\epsilon <1.$ Through this paper, we set $\epsilon=0.5$.
\end{remark}

\section{Alternative Harvesting Approach (FHMH)}
In this section we try an alternative harvesting approach.
\begin{remark}
When we compare the linear harvesting model to the classical TYC model, it is observed via Fig.~\ref{fig:fig_1B}, that the optimal levels $\eta_{2}$ of adding males in the harvesting model, is virtually zero. This is also seen in our various nonlinear models of harvesting, where we harvest females and introduce males, see section \ref{AppendixB}. Motivated by this observation, we analyze the optimal harvesting levels when both females and males are removed/harvested from the population. We call these \emph{female harvesting and male harvesting} models (FHMH).
\end{remark}

The general model is given by 
\begin{equation}
\label{NPH_0}
\begin{split}
& \frac{df}{dt}=\frac{1}{2}fm\beta L-\delta f-\eta_1 G_1(f),\\
& \frac{dm}{dt}=\frac{1}{2}fm\beta L-\delta m-\eta_2 G_2(m),
\end{split}
\end{equation}
where $G_1(f)$ and $G_2(m)$ are nonnegative functions. Again, assume that $\eta_1$ and $\eta_2$ are nonnegative. 

\subsection{Model 4: $\boldsymbol{G_1(f)=f,G_2(m)=m}$}
For clarity we restate the model:
\begin{equation}
\label{NPH_1}
\begin{split}
& \frac{df}{dt}=\frac{1}{2}fm\beta L-\delta f-\eta_1 f,\\
& \frac{dm}{dt}=\frac{1}{2}fm\beta L-\delta m-\eta_2 m.
\end{split}
\end{equation}
\subsubsection{Global stability}

The global stability of the trivial equilibrium $(0,0)$ is stated in the following theorem.

\begin{theorem}
\label{NPH1_global}
The trivial equilibrium is globally asymptotically stable in \eqref{NPH_1} if $\beta K<2\delta+\eta_1+\eta_2$.
\end{theorem}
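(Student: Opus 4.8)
The plan is to reuse the strategy from the proof of Theorem~\ref{thm_global_3}: I would take the linear Lyapunov function $V=f+m$ on the closed positive quadrant $Q=\{(f,m):f,m\ge 0\}$ and show $\dot V<0$ off the origin. Before differentiating, I would record that the triangle $\mathcal D=\{(f,m)\in Q: f+m\le K\}$ is positively invariant and absorbing: the axes are invariant ($\dot f=0$ when $f=0$, $\dot m=0$ when $m=0$), and on $\{f+m>K\}$ one has $L=1-\tfrac{f+m}{K}<0$, so every term of $\dot f$ and of $\dot m$ is non-positive; hence there $\dot V=fm\beta L-(\delta+\eta_1)f-(\delta+\eta_2)m<0$, which both prevents crossing $f+m=K$ from inside and drives every trajectory started outside into $\mathcal D$ in finite time (so all solutions are global, the vector field being bounded on $\mathcal D$). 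It then suffices to prove $\dot V<0$ on $\mathcal D\setminus\{(0,0)\}$.

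On $\mathcal D$ I would compute
\[
\dot V=\frac{df}{dt}+\frac{dm}{dt}=fm\beta L-(\delta+\eta_1)f-(\delta+\eta_2)m\;\le\;fm\beta-(\delta+\eta_1)f-(\delta+\eta_2)m
\]
using $L\le 1$. For interior points ($f,m>0$), factor out $fm$ and use $0<f\le K$, $0<m\le K$:
\[
\dot V\;\le\;fm\Big[\beta-\tfrac{\delta+\eta_1}{m}-\tfrac{\delta+\eta_2}{f}\Big]\;\le\;fm\Big[\beta-\tfrac{\delta+\eta_1}{K}-\tfrac{\delta+\eta_2}{K}\Big]\;=\;\tfrac{fm}{K}\big(\beta K-(2\delta+\eta_1+\eta_2)\big),
\]
which is negative exactly because $\beta K<2\delta+\eta_1+\eta_2$. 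On the boundary pieces the estimate is immediate: $\dot V=-(\delta+\eta_2)m<0$ when $f=0,m>0$, and $\dot V=-(\delta+\eta_1)f<0$ when $m=0,f>0$. So $\dot V<0$ throughout $\mathcal D\setminus\{(0,0)\}$.

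To conclude I would invoke LaSalle's invariance principle: $V$ is continuous, positive definite and radially unbounded on $Q$, every trajectory eventually enters $\mathcal D$, and the largest invariant subset of $\{\dot V=0\}\cap\mathcal D$ is $\{(0,0)\}$; hence $(0,0)$ is globally asymptotically stable, which is Theorem~\ref{NPH1_global}. The algebra is identical to that of Theorem~\ref{thm_global_3} with $\delta-\eta_2$ replaced by $\delta+\eta_2$; the only genuinely nontrivial bookkeeping — and the step I would be most careful with — is establishing invariance/absorption of $\mathcal D$, which is what legitimizes dividing by $f$ and $m$ and what upgrades the one-step Lyapunov inequality to a global attractivity statement.
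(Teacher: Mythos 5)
Your proposal is correct and follows essentially the same argument as the paper: the Lyapunov function $V=f+m$, the bound $L\le 1$, factoring out $fm$, and the estimates $\tfrac{1}{f},\tfrac{1}{m}\ge\tfrac{1}{K}$ leading to $\dot V\le \tfrac{fm}{K}\bigl(\beta K-(2\delta+\eta_1+\eta_2)\bigr)<0$. The additional bookkeeping you supply (invariance/absorption of the region $f+m\le K$, the boundary cases $f=0$ or $m=0$, and the explicit appeal to a Lyapunov/LaSalle theorem) is not in the paper's proof but only makes the same argument more careful.
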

\begin{proof}
Again, consider the Lyapunov function $V=f+m$. It is left to to show $\frac{dV}{dt}<0$ for all $(f,m) \not =(0,0)$. Taking the derivative yields,
\beq%\begin{equation}
\begin{split}
\frac{dV}{dt}&=\frac{df}{dt}+\frac{dm}{dt}\\
&=fm\beta L-(\delta+\eta_1)f-(\delta+\eta_2)m\\
&\leq fm\beta-(\delta+\eta_1)f-(\delta+\eta_2)m, \quad \mathrm{since} \quad L \leq 1 \\
&\leq [\beta-\frac{1}{m}(\delta+\eta_1)-\frac{1}{f}(\delta+\eta_2)] fm\\
&\leq [\beta-\frac{1}{K}(\delta+\eta_1)-\frac{1}{K}(\delta+\eta_2)] fm.
\end{split}
\eeq%\end{equation}
It is enough to show $\beta-\frac{1}{K}(\delta+\eta_1)-\frac{1}{K}(\delta+\eta_2)<0$. By direct calculation, we have $\beta K<2\delta+\eta_1+\eta_2$, which completes the proof.
\end{proof}

\begin{remark}
If one considers the condition for global asymptotic stability via theorem \ref{NPH1_global} that is $\beta K<2\delta+\eta_1+\eta_2$, to 
the condition for global asymptotic stability via theorem \ref{thm_global_3} that is $\beta K<2\delta+\eta_1- \eta_2$, we see this is stronger, and the weaker condition via 
theorem \ref{NPH1_global} enables global asymptotic stability - or extinction. Thus there is more merit in harvesting both males and females. Although counter intuitive to TYC methodology, the reason is that when we introduce normal males, there is always a chance they will mate with females - producing more females.
\end{remark}

\subsubsection{Optimal control analysis}
Assume that the removal rates are not known \emph{a priori} and enter the system as time-dependent controls. Optimal controls are sought within the range $0 \leq \eta_1, \eta_2 \leq 1$. Consider the objective function,
\beq%\begin{equation}
  J_4(\eta_{1},\eta_{2})= \int^{T}_{0} -(f+m)- \frac{1}{2}(\eta_{1}^{2} +\eta_{2}^{2} ) dt
\eeq%\end{equation}
subject to \eqref{NPH_1} and with initial conditions $f(t_{0})=f_{0}, m(t_{0})=m_0$.  Optimal controls are sought that minimize the populations and remain in the set $U_4$, namely,
\beq%\begin{equation}
 U_4= \{ (\eta_{1},\eta_{2})|\eta_{i} \ \mbox{measurable}, \  0 \leq \eta_{1} \leq 1, 0 \leq \eta_{2} \leq 1, \  t \in [0,T], \ \forall T\}.
\eeq%\end{equation}
Optimal functions $(\eta_{1}^{*}, \eta_{2}^{*})$ are sought such that,
\begin{equation}
\begin{split}
  J_4(\eta_{1}^{*}, \eta_{2}^{*}) &= \underset{(\eta_{1},\eta_{2})}{\max} \int^{T}_{0} (-(f+m) - \frac{1}{2}(\eta_1^2+\eta_{2}^{2})  dt\\
  & = \underset{(\eta_{1},\eta_{2})}{\min} \int^{T}_{0} f+m + \frac{1}{2}(\eta_1^2+\eta_{2}^{2})  dt.\\
  \end{split}
\end{equation}
Consider the following existence theorem.

\begin{theorem}
\label{thm_NPH1_oct}
Consider the optimal control problem \eqref{NPH_1}. There exists $(\eta_{1}^{*}, \eta_{2}^{*}) \in U_4$ such that
\begin{equation}
  J_4(\eta_{1}^{*}, \eta_{2}^{*}) = \underset{(\eta_{1},\eta_{2})}{\max} \int^{T}_{0} -(f+m) - \frac{1}{2}(\eta_1^2+\eta_{2}^{2})  dt
\end{equation}
\end{theorem}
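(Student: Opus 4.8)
The plan is to mirror the proof of Theorem \ref{thm_tyc_3_oct} verbatim, since the control set $U_4$ and the objective functional $J_4$ have exactly the same structure as $U_1$ and $J_1$, the only difference being the sign on the $\eta_2$-term in the state equations (harvesting males rather than stocking them) and the slightly simpler admissible range $0\le\eta_2\le1$. First I would establish that the state variables remain nonnegative and globally bounded: for any admissible pair $(\eta_1,\eta_2)\in U_4$, the region $\{f\ge0,\ m\ge0,\ f+m\le K\}$ is forward invariant for \eqref{NPH_1}, because on $f=0$ one has $f'=0$, on $m=0$ one has $m'=0$, and along $f+m=K$ one has $L=0$ so $f'+m'=-(\delta+\eta_1)f-(\delta+\eta_2)m\le0$. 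Hence $0\le f+m\le\max(f_0+m_0,K)$ for all $t\in[0,T]$, and the controls live in a fixed compact box $[0,1]^2$.

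Next I would invoke the standard existence theorem for optimal controls of Fleming--Rishel type \cite{FR75}. The required hypotheses are: (i) the set of admissible controls is nonempty — clear, since $(\eta_1,\eta_2)\equiv(0,0)\in U_4$; (ii) the admissible control set is closed and convex — $[0,1]^2$ is; (iii) the right-hand side of the state system is bounded by a linear function of the state and control, which follows from the boundedness just established together with the polynomial form of the vector field; (iv) the integrand $-(f+m)-\tfrac12(\eta_1^2+\eta_2^2)$ is concave in $(\eta_1,\eta_2)$ — it is, being $-(f+m)$ minus a positive-definite quadratic; and (v) a coercivity/convexity bound of the form (integrand) $\le c_1-c_2(|\eta_1|^2+|\eta_2|^2)$ for some $c_2>0$, which holds with $c_2=\tfrac12$ after using $-(f+m)\le0$. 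These are precisely the conditions verified for Theorem \ref{thm_tyc_3_oct}, so the conclusion — existence of a maximizing pair $(\eta_1^*,\eta_2^*)\in U_4$ — follows.

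There is essentially no serious obstacle here; the statement is routine once the boundedness of the state variables is in hand, and that boundedness is the only genuinely model-specific input. If one wanted to be terse, the proof can be compressed to a single sentence: since $f+m\le K$ and $0\le\eta_1,\eta_2\le1$, the functional $J_4$ is defined over a compact admissible set and is concave in $(\eta_1,\eta_2)$, so by the standard argument of \cite{FR75} an optimal control exists. I would write it at roughly that level of detail, matching the brevity of the earlier existence proofs in the paper, perhaps adding one line noting that invariance of $\{f+m\le K\}$ is what supplies the a priori bound.

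\begin{proof}
Since $f+m \leq K$ (the region $\{f\geq 0,\ m\geq 0,\ f+m\leq K\}$ is forward invariant for \eqref{NPH_1}, because $f'=0$ on $f=0$, $m'=0$ on $m=0$, and $f'+m' = -(\delta+\eta_1)f-(\delta+\eta_2)m \leq 0$ on $f+m=K$ since $L=0$ there) and $0 \leq \eta_1,\eta_2 \leq 1$, the compactness of the admissible set for $J_4$ follows from the global boundedness of the state variables and the controls $\eta_1$ and $\eta_2$. Also the functional $J_4$ is concave in both of its arguments $\eta_1$ and $\eta_2$, being $-(f+m)$ minus the positive-definite quadratic $\tfrac12(\eta_1^2+\eta_2^2)$. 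This is easily verified via standard application \cite{FR75}. Subsequently, these facts in conjunction provide the existence of an optimal control.
\end{proof}
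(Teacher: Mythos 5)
Your proof is correct and follows essentially the same route as the paper's: boundedness of the states and controls plus concavity of the integrand in $(\eta_1,\eta_2)$, with existence then following from the standard Fleming--Rishel argument \cite{FR75}. The only difference is that you explicitly justify $f+m\le K$ via forward invariance of $\{f\ge 0,\ m\ge 0,\ f+m\le K\}$, which the paper simply asserts; this is a harmless (indeed welcome) addition, not a departure.
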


\begin{proof}
Since $f+m \leq K,~ 0 \leq \eta_1 \leq 1$ and $0 \leq \eta_2  \leq 1$, the compactness of the functional $J$ follows from the global boundedness of the state variables and the controls $\eta_1$ and $\eta_2$. Also the functional $J_4$ is concave in both arguments, $\eta_{1}$ and $\eta_{2}$. These facts in conjunction give the existence of an optimal control.
\end{proof}

Pontryagin's maximum principle is used to derive the necessary conditions on the optimal controls. Consider the Hamiltonian for $J_4$, namely,
\beq%\begin{equation}
\label{NPH1_ham}
H_4=-(f+m) - \frac{1}{2}(\eta_1^2+\eta_{2}^{2}) +\lambda_1 f'+\lambda_2 m'.
\eeq%\end{equation}
The Hamiltonian is used to find a differential equation of the adjoint $\lambda_i, i=1,2.$ That is,
\beq%\begin{equation}
\begin{split}
&\lambda_1'(t)=1-\lambda_1 \left[\frac{m \beta }{2} (1-\frac{f+m}{K})-\frac{f m \beta}{2K}-\delta-\eta_1\right]-\lambda_2 \left[\frac{ m \beta}{2} (1-\frac{f+m}{K})-\frac{ f m \beta}{2K}\right],\\
&\lambda_2'(t)=1-\lambda_1 \left[\frac{f \beta }{2} (1-\frac{f+m}{K})-\frac{f m \beta}{2K}\right]-\lambda_2\left[\frac{ f \beta}{2} (1-\frac{f+m}{K})-\frac{ f m \beta}{2K}-\delta-\eta_2\right]
\end{split}
\eeq%\end{equation}
with the transversality condition $\lambda_1(T)=\lambda_2(T)=0$.

The Hamiltonian function is differentiated with respect to the control variables $\eta_1$ and $\eta_2$ resulting in
\beq%\begin{equation}
\begin{split}
& \frac{\partial H_4}{\partial \eta_1}=-f\lambda_1-\eta_1,\\
& \frac{\partial H_4}{\partial \eta_2}=-m\lambda_2-\eta_2
\end{split}
\eeq%\end{equation}
As shown previously, a compact way of writing the optimal control for $\eta_i$ is
%We find a characterization of $\eta_1$ by considering three cases,
%
%\begin{equation}
%\begin{split}
%\frac{\partial H_4}{\partial \eta_{1}}<0 & \Rightarrow \eta_{1}^{*}=0  \qquad %\Rightarrow 0>- f\lambda_1  \\
%\frac{\partial H_4}{\partial \eta_{1}}=0 & \Rightarrow \eta_{1}^{*}=-f\lambda_1 %\Rightarrow 0 \leq - f\lambda_1 \leq 1\\
%\frac{\partial H_4}{\partial \eta_{1}}>0 & \Rightarrow \eta_{1}^{*}=1 \qquad %\Rightarrow 1<- f\lambda_1
%\end{split}
%\end{equation}
%Notice the recurrence of the expression $- f\lambda_1$. When the control is at the lower bound, the expression is strictly less than 0. When the control is at the upper bound, the expression is strictly larger than 1. Thus a compact way of writing the optimal control $\eta_1$ is 
%
\begin{eqnarray}
\label{NPH1_opt_1}
\eta_1(t)^{*}&=&\min(1, \max(0, -f\lambda_1)),\\
\label{NPH1_opt_2}
\eta_2(t)^{*}&=&\min(1, \max(0, -m \lambda_2)).
\end{eqnarray}

\begin{theorem}
An optimal control $(\eta_1^*, \eta_2^*) \in U_4$ for the system \eqref{NPH_1} that maximizes the objective functional $J$ is characterized by \eqref{NPH1_opt_1}-\eqref{NPH1_opt_2}.
\end{theorem}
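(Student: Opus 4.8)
\emph{Proof proposal.} The plan is to obtain the characterization as the standard consequence of Pontryagin's maximum principle, exactly as was done for Model~1 in \eqref{opt_1}--\eqref{opt_2}. First I would invoke the existence Theorem~\ref{thm_NPH1_oct} so that an optimal pair $(\eta_1^*,\eta_2^*)\in U_4$ with associated state trajectory $(f,m)$ is in hand. Then, since the right-hand sides of \eqref{NPH_1} and the integrand of $J_4$ are $C^1$ in the states and controls on the relevant (bounded) region, Pontryagin's maximum principle applies: there exist absolutely continuous adjoint functions $\lambda_1,\lambda_2$ solving the adjoint system displayed above (derived from $-\partial H_4/\partial f$ and $-\partial H_4/\partial m$ with the sign conventions coming from the Hamiltonian $H_4$ in \eqref{NPH1_ham}) together with the transversality conditions $\lambda_1(T)=\lambda_2(T)=0$, and such that for almost every $t\in[0,T]$ the pair $(\eta_1^*(t),\eta_2^*(t))$ maximizes $H_4$ over the admissible box $[0,1]^2$.

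The key step is the pointwise maximization of $H_4$ in $(\eta_1,\eta_2)$. Because $H_4$ depends on the controls only through the term $-\tfrac12(\eta_1^2+\eta_2^2)$ plus the linear contributions $-\eta_1 f\lambda_1$ and $-\eta_2 m\lambda_2$ coming from $\lambda_1 f'+\lambda_2 m'$, it is separable and strictly concave (leading coefficient $-\tfrac12<0$) in each $\eta_i$. Hence the constrained maximizer decouples into two independent one-dimensional problems, and each is solved by projecting the unconstrained stationary point onto $[0,1]$. Computing $\partial H_4/\partial\eta_1=-f\lambda_1-\eta_1$ and $\partial H_4/\partial\eta_2=-m\lambda_2-\eta_2$, the interior critical values are $\eta_1=-f\lambda_1$ and $\eta_2=-m\lambda_2$. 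I would then run the three-case argument (derivative negative at $\eta_i=0$ forces $\eta_i^*=0$; derivative positive at $\eta_i=1$ forces $\eta_i^*=1$; otherwise the interior stationary point is attained) to conclude $\eta_1^*(t)=\min(1,\max(0,-f\lambda_1))$ and $\eta_2^*(t)=\min(1,\max(0,-m\lambda_2))$, which are \eqref{NPH1_opt_1}--\eqref{NPH1_opt_2}.

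Finally I would note that the resulting formulas indeed define admissible controls: $f,m,\lambda_1,\lambda_2$ are continuous, so the clamped expressions are continuous, hence measurable, and they take values in $[0,1]$ by construction, so $(\eta_1^*,\eta_2^*)\in U_4$. The only mild subtlety — and the part I expect to need the most care — is checking that the concavity of $H_4$ in the controls genuinely makes the box-maximization equal to the coordinatewise clamped stationary point (so that no boundary/corner phenomena are missed), but this is immediate from separability and the negative definiteness of the quadratic part; everything else is routine and parallels the Model~1 and Model~0 derivations already carried out in the paper. $\qed$
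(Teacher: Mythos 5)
Your proposal is correct and follows essentially the same route as the paper: the paper's justification for this theorem is precisely the preceding derivation via Pontryagin's maximum principle (Hamiltonian $H_4$, adjoint system with $\lambda_1(T)=\lambda_2(T)=0$, and $\partial H_4/\partial\eta_i$), with the clamped formulas obtained by the same three-case projection argument "as shown previously" for Model 1. Your added remarks on separability, strict concavity in each $\eta_i$, and admissibility of the resulting controls are consistent with, and slightly more explicit than, the paper's treatment.
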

Various other forms of harvesting for FHMH are also tried, but we maintain the same general structure that is females are harvested via a term $-\eta_1 G_1(f)$ and males are harvested to the system via a term $-\eta_2 G_2(m)$. The results spanning various forms of $G_{1}, G_{2}$ are relegated to the appendix, section \ref{AppendixC}.

\subsection{Numerical Simulations and Comparisons}
In this section, we will numerically simulate the optimal strategy and its corresponding optimal states for the models 4-6 - the FHMH class of models. For models 5,6 the reader is referred to section \ref{AppendixC}. The parameters for simulating are given by:

\begin{table}[H]
\caption{Parameters used for numerical simulations}
\begin{tabular}{|c|l|r|}
\hline
\textbf{Parameter }            & ~~\textbf{Description}       & \textbf{Value} \\ \hline
$ \beta $  & ~~Birth rate        &~~ 0.0057 ~~\\ \hline
$ \delta $ & ~~Death rate        &~~ 0.0648 ~~\\ \hline
$K$                     & ~~Carrying capacity & 405 ~~   \\ \hline
$T$                     & ~~Terminal time     & 200 ~~ \\ \hline
$d_1$                 &  ~~Parameter In $G_1$ & 1~~\\ \hline
$d_2$                 &  ~~Parameter In $G_2$ & 1~~\\ \hline
\end{tabular}
\end{table}

\begin{figure}[H]
%\label{model4}
        \begin{minipage}[b]{0.480\linewidth}
            \centering
            \includegraphics[width=\textwidth]{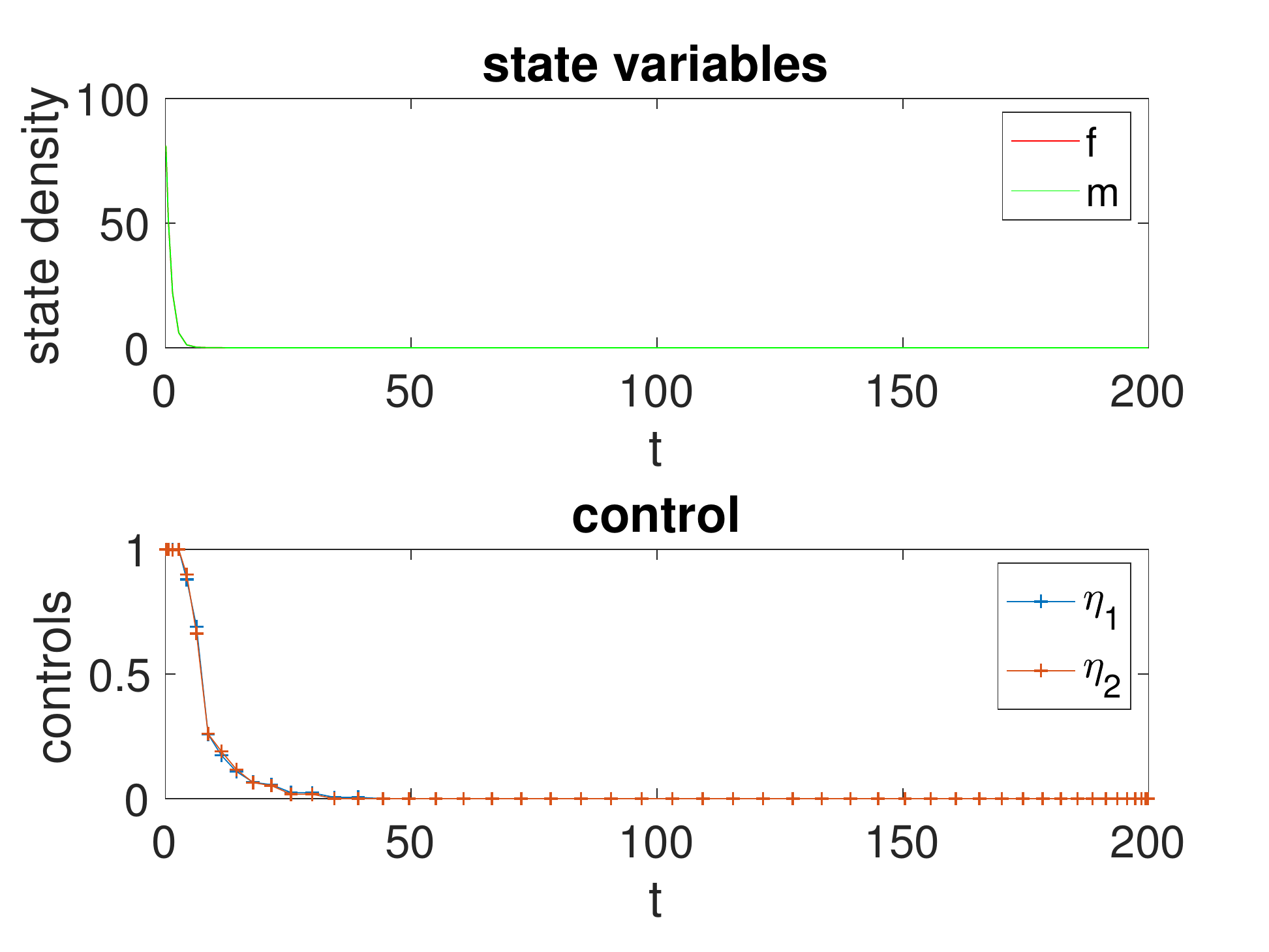}
            \subcaption{}
             %\label{model_4_1}
        \end{minipage}
        \hspace{0.10cm}
        \begin{minipage}[b]{0.480\linewidth}
            \centering
            \includegraphics[width=\textwidth]{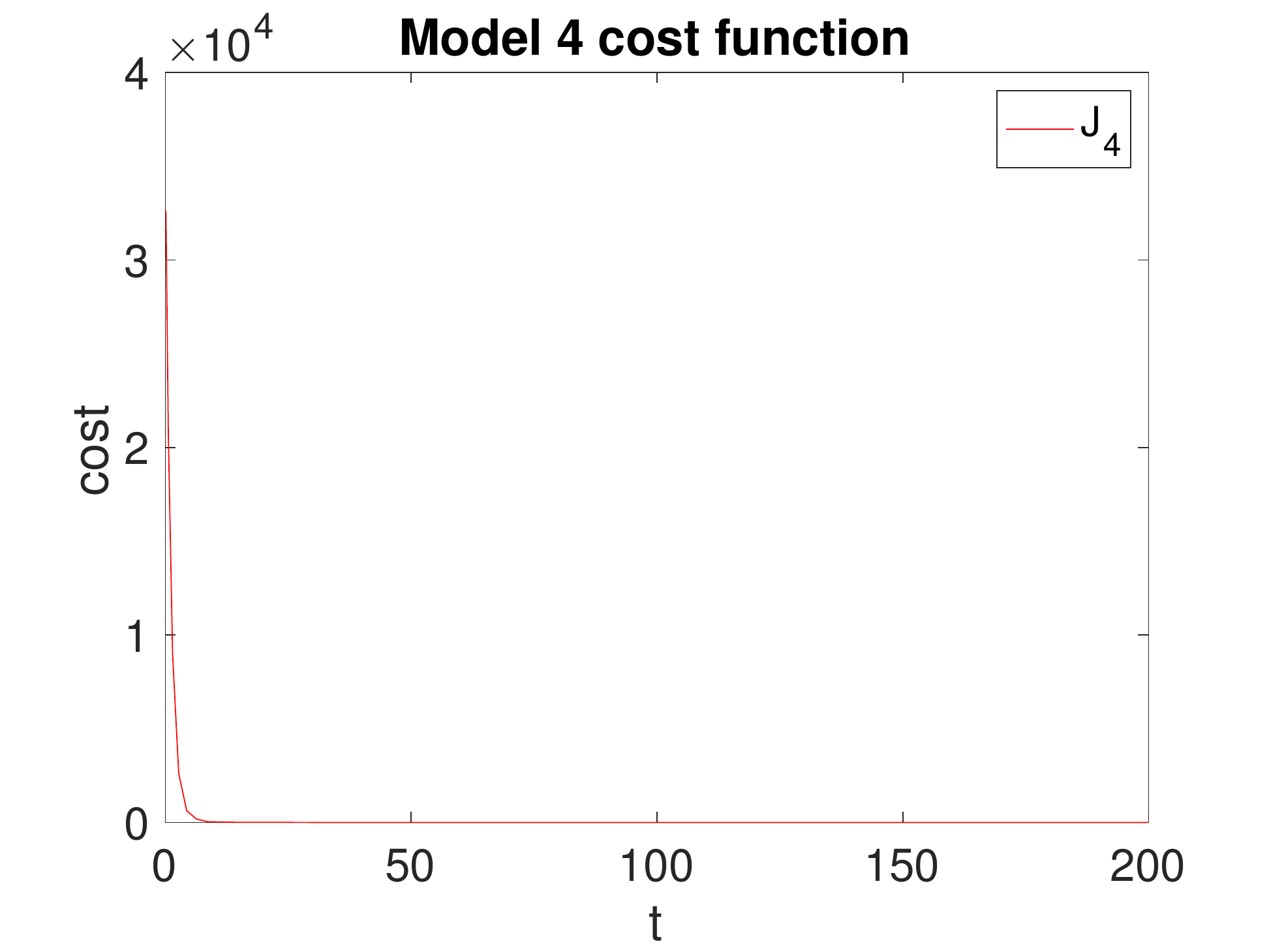}
            \subcaption{}
            % \label{model_4_2}
        \end{minipage}

\caption{(A) Female (red) and male (green) densities change with time $t$ with optimal control $\eta_1^*$ (blue) and $\eta_2^*$ (red) in Model 4. As we can see the controls and densities are indistinguishable. (B) Objective function $J_4$ decrease as increasing time with optimal control.}
        \label{model4}
\end{figure}

\begin{figure}[H]
%\label{model5}
        \begin{minipage}[b]{0.480\linewidth}
            \centering
            \includegraphics[width=\textwidth]{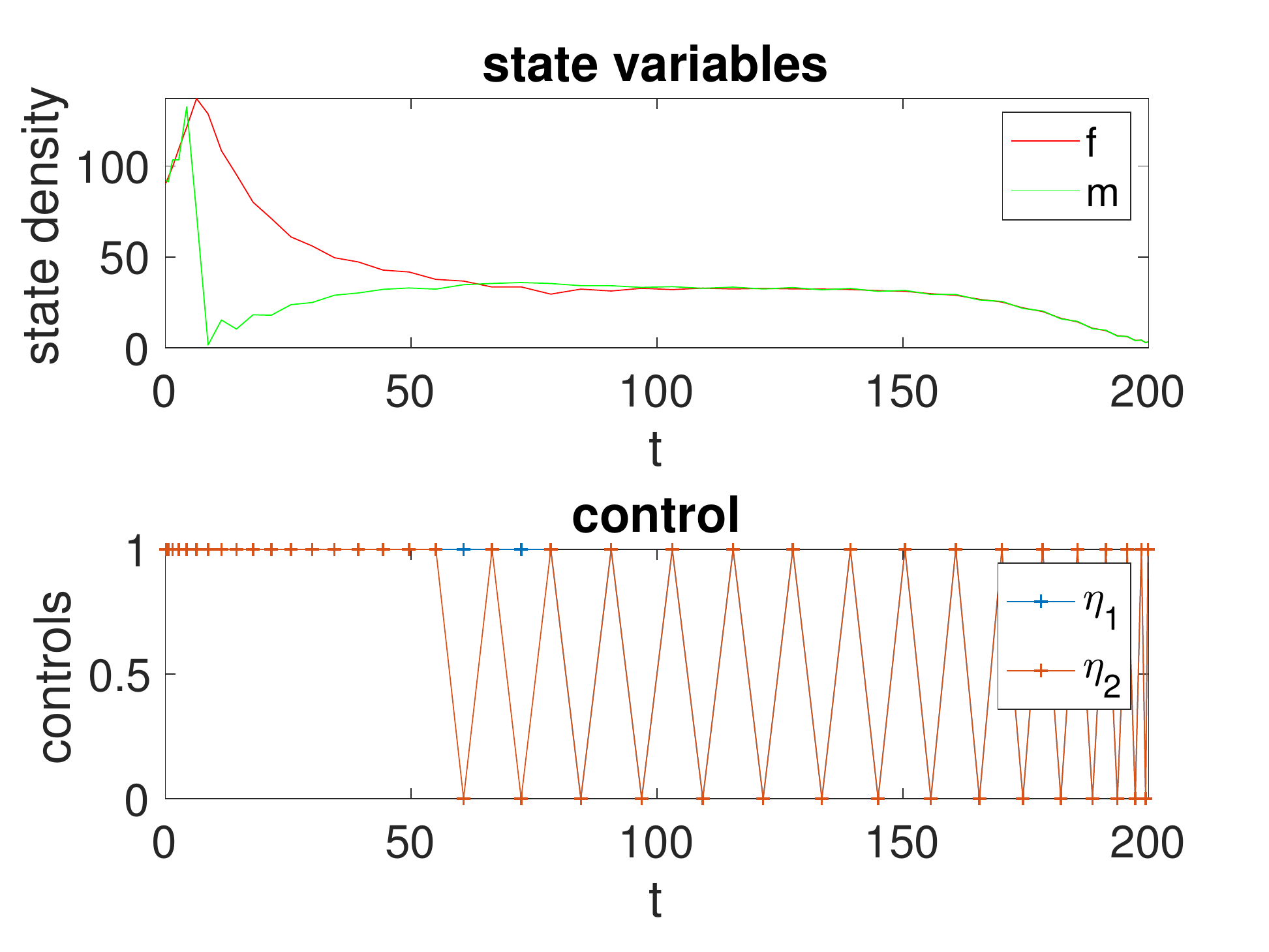}
            \subcaption{}
 %            \label{model_5_1}
        \end{minipage}
        \hspace{0.10cm}
        \begin{minipage}[b]{0.480\linewidth}
            \centering
            \includegraphics[width=\textwidth]{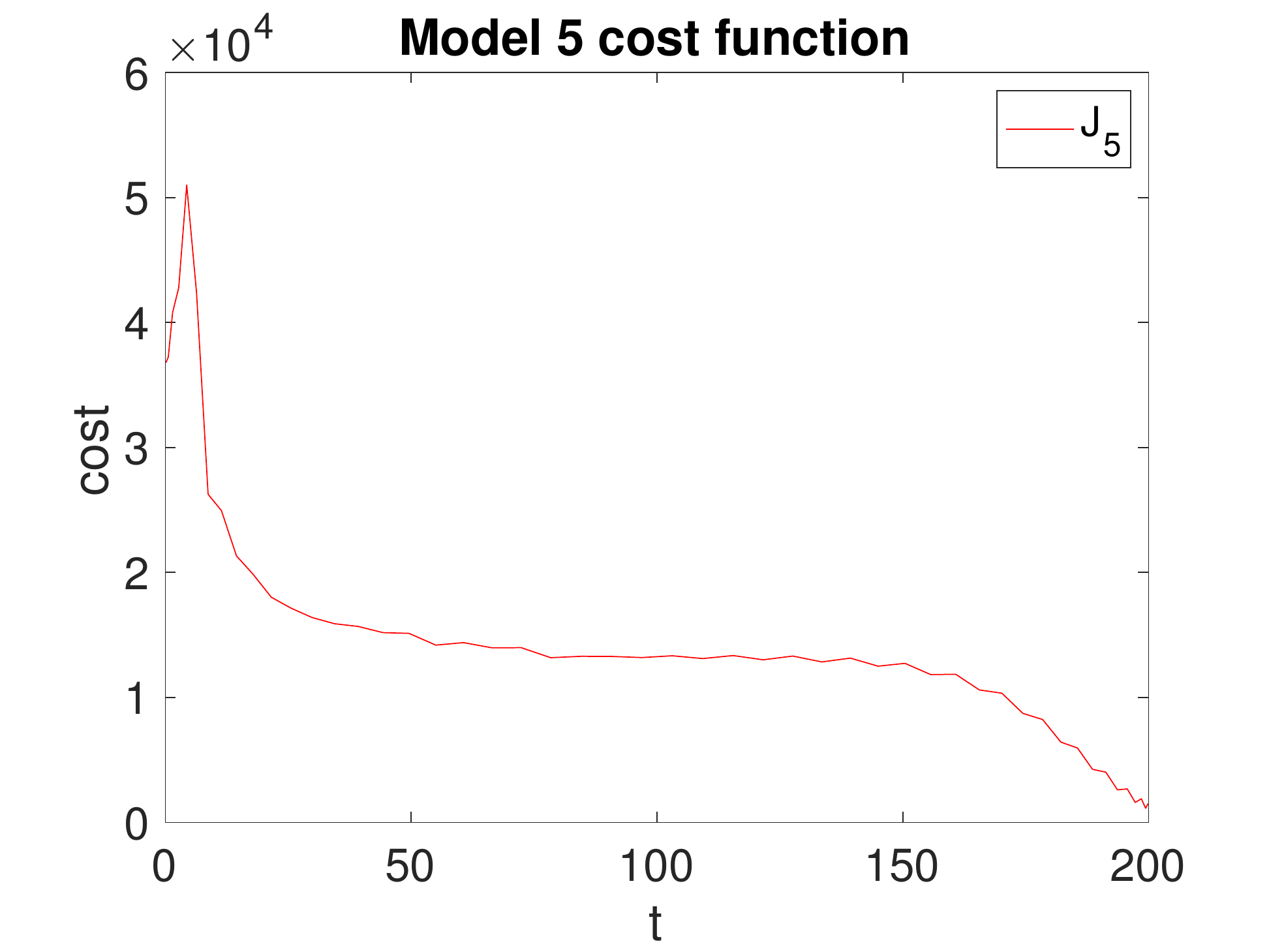}
            \subcaption{}
 %            \label{model_5_2}
        \end{minipage}
\caption{(A) Female (red) and male (blue) densities change with time $t$ with optimal control $\eta_1^*$ (blue) and $\eta_2^*$ (red) in Model 5. (B) Objective function $J_5$ varies as increasing time with optimal control.}
      \label{model5}
\end{figure} 

\begin{figure}[H]
        \begin{minipage}[b]{0.480\linewidth}
            \centering
            \includegraphics[width=\textwidth]{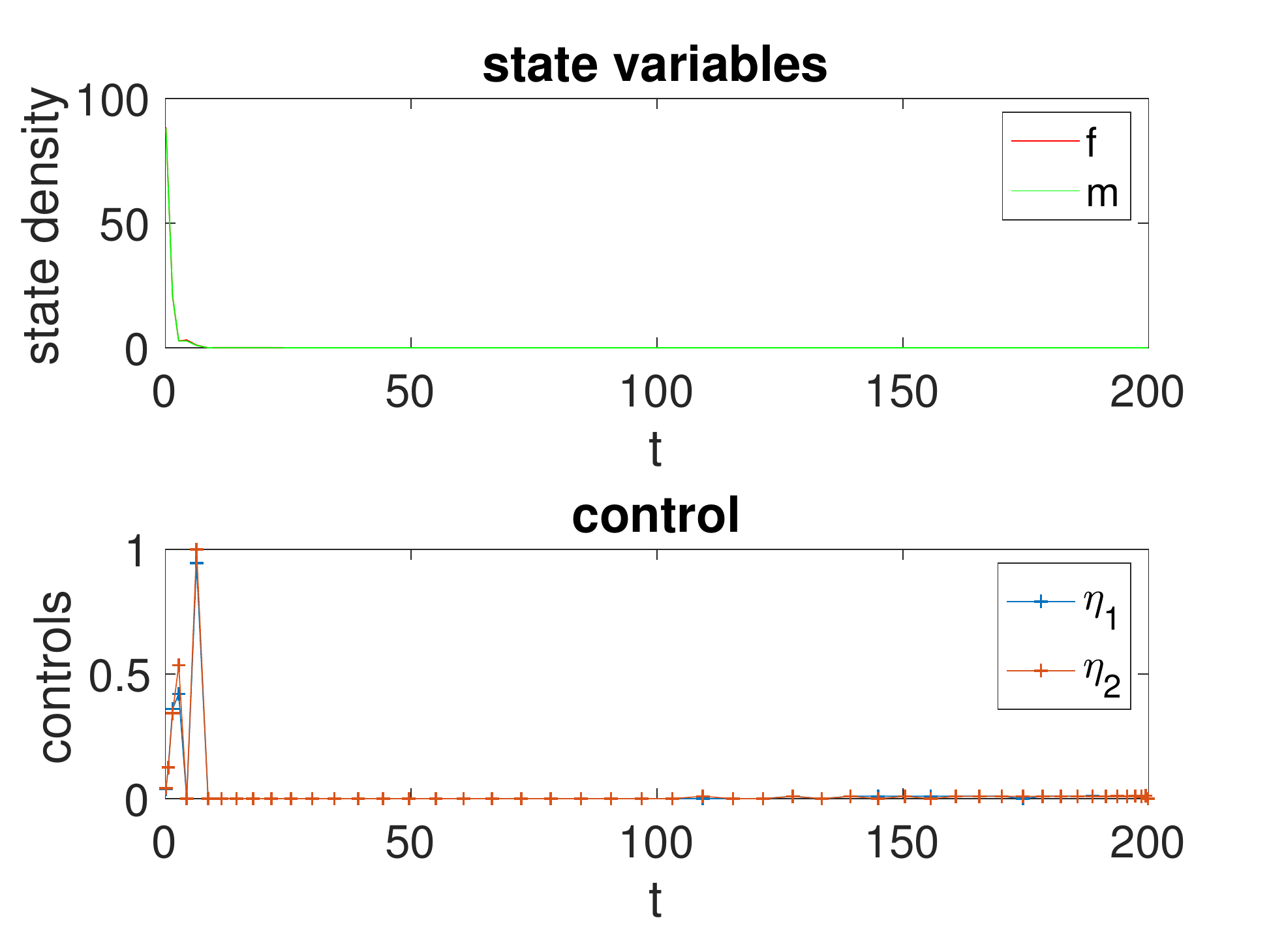}
            \subcaption{}
%             \label{model_6_1}
        \end{minipage}
        \hspace{0.10cm}
        \begin{minipage}[b]{0.480\linewidth}
            \centering
            \includegraphics[width=\textwidth]{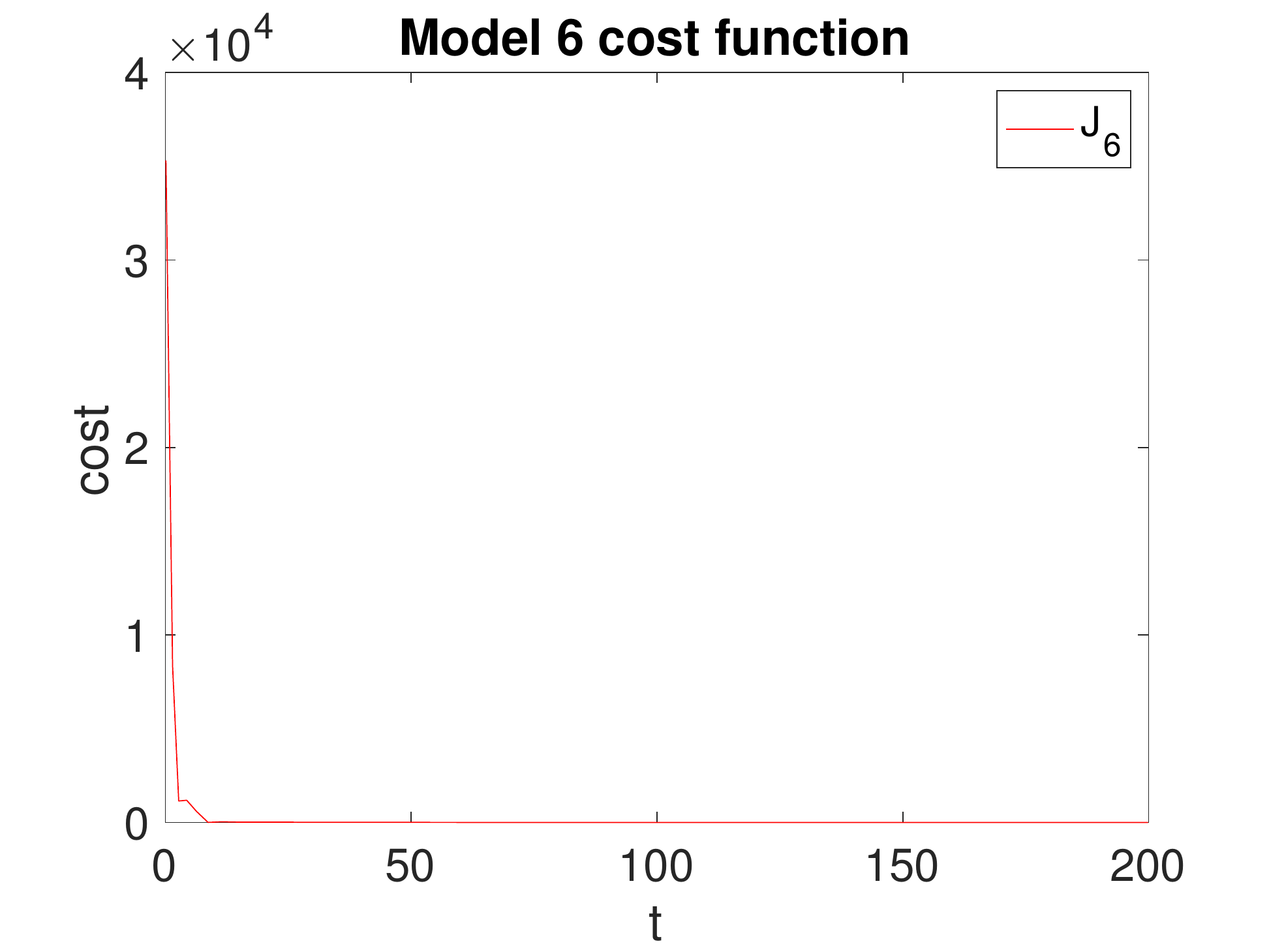}
            \subcaption{}
%             \label{model_6_2}
        \end{minipage}
\caption{(A) Female (red) and male (green) densities over time $t$ with optimal control $\eta_1^*$ (blue) and $\eta_2^*$ (red) in Model 6. We can see that the densities and controls are indistinguishable. (B) Objective function $J_6$ decrease as increasing time with optimal control.}
\label{model6}
\end{figure}

%\begin{comment}

%\begin{table}[H]
%\caption{Comparisons among Model 1-Model 6}
%\begin{tabular}{|c|c|c|c|} 
%\hline
%\textbf{Results}  & \textbf{Model 1}  & \textbf{Model 2}  & \textbf{Model 3} \\ \hline
%Total cost   & 84168 & 185660 & 31050 \\ \hline
%Female pop at final time    & 0  & 171  & 0  \\ \hline 
%Male pop at final time   &  0.0028 & 180  & 0.0002    \\ \hline
%Female Approximate Eradication Time (month  )  & 60 & /  & 2   \\ \hline
%Male Approximate Eradication Time (month)  &  110   & / & 73  \\ \hline
%\end{tabular}
%\end{table}
%
%\begin{table}[H]
%\caption{Continue the above Table }
%\begin{tabular}{|c|c|c|c|} 
%\hline
%\textbf{Results}  & \textbf{Model 4} & \textbf{Model 5} & \textbf{Model 6}\\ \hline
%Total cost   & 14250 & 93600 & 15790  \\ \hline
%Female pop at final time    & 0  & 3.2291  & 0 \\ \hline 
%Male pop at final time    & 0  & 3.2966  & 0  \\ \hline
%Female Approximate Eradication Time (month  )   & 7  & /  & 8  \\ \hline
%Male Approximate Eradication Time (month)    & 7  & /  & 7 \\ \hline
%\end{tabular}
%\end{table}
%%\end{comment}
Table.~\ref{comp_all} compares all harvesting models including nonlinear forms of harvesting for FHMS (Model 2-3) and FHMH (Model 5-6), which are refered to section \ref{AppendixB} and \ref{AppendixC}, respectively. 
\begin{table}[H]
\caption{Comparisons among Model 1-Model 6}
\resizebox{\textwidth}{14mm}{
\begin{tabular}{|c|c|c|c|c|c|c|} 
\hline
\textbf{Results}  & \textbf{Model 1}  & \textbf{Model 2}  & \textbf{Model 3} & \textbf{Model 4} & \textbf{Model 5} & \textbf{Model 6}\\ \hline
Total cost   & 84168 & 185660 & 31050 & 14250 & 93600 & 15790\\ \hline
Females population at final time    & 0  & 171  & 0 & 0  & 3.2291  & 0  \\ \hline 
Males population at final time   &  0.0028 & 180  & 0.0002  & 0  & 3.2966  & 0  \\ \hline
Female Approximate Eradication Time & 60 & /  & 2  & 7  & /  & 9  \\ \hline
Male Approximate Eradication Time   &  110   & / & 73  & 7  & /  & 8\\ \hline
\end{tabular}}
\label{comp_all}
\end{table}

\section{Conclusions}

In this paper we derive and analyze optimal controls for TYC strategy. The population can be driven to extinction with an optimal $\mu(t)$ that requires a large initial introduction of YY super males. Our simulations, see Fig. \ref{fig:fig_1A}, show that one requires to introduce supermales at a continuous rate of 12-14$\%$ of the carrying capacity for at least a month, before this introduction can catapult the population densities toward controllable levels, for which extinction can occur. 
The difficulty in creating the supermale population motivates alternative approaches, such as harvesting, that attempt to mirror the TYC strategy. However, the effectiveness of driving a population to extinction seems to depend delicately on the form of harvesting. With this in mind we introduce two classes of models, 

\begin{itemize}
\item Harvesting females while stocking males - FHMS models. Herein we introduce three subclasses, linear harvesting, saturating density dependent harvesting and unbounded density dependent harvesting.

\item Harvesting females and harvesting males - FHMH models. Herein again we introduce three subclasses, linear harvesting, saturating density dependent harvesting and unbounded density dependent harvesting.
\end{itemize}

Both classes of models can yield extinction - and are generally more effective than the TYC strategy. To this end we compare them via a rigorous optimal control approach, where our metrics of judging how good or bad a model is, by looking at various criteria such as (1) costs of putting in the controls (2) time it takes to eradicate females (3) time it takes to eradicate males. Also note, we use population parameters that are actual outputs from mesocosm experiments at the USGS laboratory in Gainesville, Florida. To the best of our knowledge this is the first study in the literature which attempts to determine the efficay of the TYC strategy, where best fit parameters from population experiments have been used to tune the models.
Based on these criteria we summarize some of our key results.

Fig.~\ref{model4} indicates that extinction occurs with our optimal controls of Model 4. Notice that a large amount of harvesting must first occur to ensure extinction occurs.  This is reasonable to expect since in order for the population to be attracted to the extinction state then the populations must be driven through substantial harvesting to low enough densities. Model 2 and Model 5, see Fig. \ref{model5}, are not able to achieve eradication (for the parameters considered), although Model 5 female and male fish numbers are lower at the terminal time. Hence, this indicates that if harvesting is modeled by rational functions, that is at some saturating rate, then using this form of harvesting is disadvantageous.  Furthermore, the total costs of Model 2 and 5 are \emph{185660} and \emph{93600}, respectively, indicating the hybrid harvesting strategy FHMS, which mirrors the TYC strategy is less effective as an eradication and control tool than its counterpart FHMH models, for such saturating forms of harvesting.
%Since the rational harvesting functions attempt to model harvesting at large populations this may suggest to a habitat controller

A similar situation is observed in comparing Model 6 to Model 3.  Fig.~\ref{model6} indicates that extinction can occur when optimal harvesting controls are used. Again, the total costs of Models 3 and 6 are \emph{31050} and \emph{15790}, respectively which indicates that FHMH model is quantifiably better than FHMS model, even when we have unbounded harvesting functions. 
Fig.~\ref{model_2_1} implies that extinction cannot occur if harvesting obeys the saturating form found in Model 2.  However, females and males can be driven to extinction with our optimal controls for Model 3, depicted in Fig.~\ref{model_3_1}, where we use unbounded harvesting functions. This is further evidence that harvesting via a saturating harvesting term such as in Model 2 indicates unfavorable results. However, if harvesting is modeled by a power function then favorable results are found.  In fact, the total cost of \emph{31050} is lower for Model 3 than that of Model 1 - and so we can conclude that within the class of FHMS models, Model 3 is the best; the total cost of \emph{14250} is lower for Model 4 than that of Model 6 - and so we can conclude that within the class of FHMH models, Model 4 is the best. Also we can probably say the class of FHMH models is better than the class of FHMS models. 

There is one \emph{very interesting} exception to this. If we compare Model 6 to Model 3, we see that although Model 6 has a lower cost, females approximately eradicate at the $2^{nd}$ month in the Model 3, whereas it takes about 9 months to make females extinct in the Model 6. Thus if we are concerned with female eradication speed, for the realistic parameters that we have gotten out of our population experiments, Model 3 actually does better than Model 6. %Thus if we are concerned with female numbers at the terminal time, for the realistic parameters that we have gotten out of our population experiments, model 3 actually does better than model 6.

Note, in various situations the efficacy of harvesting can be indeed questioned.
For example, there is a large body of literature that points to various changes in population parameters as well as body size in fish, under pressure of harvesting. See \cite{sys11}, where it is observed that Lake trout populations  exhibit age reduction at first maturity and clear increases in fecundity when harvested. Also, lake trout body size increased in populations where exploitation caused density reduction \cite{sys11}. Thus in reality, there is strong evidence that fishes will exhibit a density-dependent response to harvest, as seen in these examples - questioning the efficacy of harvest, unless models show otherwise. Clearly their life history parameters change, as their density is affected by harvest, resulting in say increased fecundity. 
One then must consider density dependent life history parameters, such as $\delta, \beta$, when we consider our harvesting class of models. To this end we have begun mesocosm experiments to try and derive the right forms of density dependence under harvesting. Modeling harvesting processes via such density dependent population parameters is our next immediate future goal. The use of harvesting is restricted to certain field situations where the target organisms can be encountered, detected and removed in a practical manner.  Thus, harvesting may not be possible in some situations, such as in vast landscapes that are difficult or impossible to access or when working with organisms that are cryptic or difficult to locate.  Thus, we acknowledge that while it may not be possible in all habitats or with all species, in cases where harvesting is feasible it may be an efficient strategy to reduce or eliminate non-native species populations. Furthermore there is literature on selective harvesting, such as in harvesting females versus males, and its population consequences \cite{MN07}.
Another question is what our results predict about the use of the TYC strategy as an eradication tool. This is very subjective and depends on the resources available to the manager. If one looks at the average rate of harvesting, in our optimally-controlled scenarios it seems the average rate of harvest can vary between $0.03$ for Model 1 (only harvest females) to $0.125$ for Model 4 (harvest both females and males). Note, the natural death rate of the fish ($\delta$) is $0.065$. Thus it really depends if one \emph{can} harvest at these high rates - and what the associated costs are to sustain such a high harvesting rate - again calling into question the efficacy of the harvesting strategy. For example, Model 4 clearly seems the best model in terms of cost and eradication time, but what we don't account for here is the cost of harvesting at approx. $12\%$, when the natural death rate is about $6\%$ - and how feasible this is in the first place. A viable alternative, depending again on what is feasible from a management point of view, could be to try various \emph{hybrid} models, that invoke TYC type dynamics coupled with a certain amount of harvesting. Comparing such hybrid models to pure TYC or pure harvesting strategies, are among some of our future directions.

In essence these harvesting models are a means of mimicking the effect of the TYC strategy, without  the use of the YY super males. The TYC strategy remains a powerful method for invasive species control \cite{GutierrezTeem06, TGP13, Schill2017, Schill16, Schill18}. Our future goal is to use these results to guide further experiments into harvesting strategies. In a closed mesocosm environment this is easily doable as we can always count total populations of the fish periodically. These counts can then be used to harvest/restock exactly those many fish based on the form of harvesting to be used. If favorable results are obtained in the mesocosm, this becomes a confidence booster for natural resource managers to try TYC type strategies coupled with some harvesting, or hybrid strategies for the control of non-native species in the wild. Our results also establish a framework where via harvesting in mesocosm settings - we could actually predict the efficacy of the powerful TYC strategy in the wild.

%  so as to then based on the form of harvesting to be used, harvest/restock exactly those many fish. If favorable results are gotten in the mesocosm, this becomes a confidence booster for agencies such as the USGS to try TYC type strategies coupled with some harvesting, or hybrid strategies for the control of invasive species in the wild.

%{\color{blue}We must be careful here.  We are trying to \textbf{model} harvesting.  We are not stating how to harvest the fish in any particular manner. What this section shows is that improving your model of harvesting by a realistic rational function which provides a maximal harvesting for large populations is not only more realistic BUT is also favorable.  That is, when you include a more realistic harvesting term the cost of harvesting is predicted to be lower than that in other models.  I have reworded this to reflect this.}

\section{Figure Legends}

%Figure 1:
%\newline
%The pedigree tree of the TYC model (that demonstrates {\it Trojan Y-Chromosome} eradication strategy).
% (a) Mating of a wild-type XX female (f) and a wild-type XY male (m). (b) Mating of a wild-type XY male (m) and a sex-reversed YY female (r). (c) Mating of a wild-type XX female (f) and a YY supermale (s). (d) Mating of a sex-reversed YY trojan female (r) and a YY supermale (s). Red color represents wild types, and white color represents phenotypes.

 Figure 1:
\newline
 Left panel: the female/male density change with time without TYC strategy. Right Panel: the female/male density change with time by introducing YY supermales (TYC strategy).

 Figure 2:
\newline
 Male (top) and female (bottom) fancy guppy (\textit{Poecilia reticulata}). Photos were taken by Howard Jelk, USGS.

 Figure 3:
\newline
 Left panel: This is a tank from the USGS facility where the mesocosm experiment was run with fancy guppies (\textit{Poecilia reticulata}). Right panel: preliminary population data over 11-month period. The smooth curve is a result of simulating the basic population model \eqref{TYC_1} (with $s=0$) with best-fit parameters $\beta=0.0057, ~\delta=0.0648,$ and $K=405$.

 Figure 4:
\newline
 (A) Female (red), male (green) and supermale (blue) densities and optimal control $\mu(t)$ in \eqref{TYC_1} change with time $t$. (B) Female (red) and male (green) densities and optimal controls $\eta_{1}(t)$ and $\eta_{2}(t)$ in \eqref{TYC_3_OCT} change with time $t$.

 Figure 5:
\newline
 (A) In this simulation we look at the objective function towards the optimal implement for each model, we clearly see the cost function of TYC model (Model 0) is much larger than FHMS model (Model 1). (B) Since the constraints and scales for $\mu$ and $\eta_{1}, \eta_{2}$ are different, it's hard to compare the effectiveness for each strategy, so here we also look at the cost deducting the controls, that is, $\int^{T}_{0} f+m$  $dt $ under optimal control.

 Figure 6:
\newline
 (A) Female (red) and male (green) densities change with time $t$ with optimal control $\eta_1^*$ (blue) and $\eta_2^*$ (red) in Model 4. As we can see the controls and densities are indistinguishable. (B) Objective function $J_4$ decrease as increasing time with optimal control.

 Figure 7:
\newline
 (A) Female (red) and male (blue) densities change with time $t$ with optimal control $\eta_1^*$ (blue) and $\eta_2^*$ (red) in Model 5. (B) Objective function $J_5$ varies as increasing time with optimal control.

 Figure 8:
\newline
(A) Female (red) and male (green) densities over time $t$ with optimal control $\eta_1^*$ (blue) and $\eta_2^*$ (red) in Model 6. We can see that the densities and controls are indistinguishable. (B) Objective function $J_6$ decrease as increasing time with optimal control.

Figure 9:
\newline
Female (red) and male (green) densities change with time $t$ and the optimal controls $\eta_1^*$ (blue) and $\eta_2^*$ (red) for (A) Model 2 and (B) Model 3. This figure can be found in appendix.

Figure 10:
\newline
Objective function $J_2$ and $J_3$ change with time $t$  with optimal controls $\eta_1^*$ and $\eta_2^*$ for (A) Model 2 and (B) Model 3. This figure is in appendix.

\section{Acknowledgements}

JL and RP would like to acknowledge valuable support from the NSF via DMS-1715377 and DMS-1839993. MB would like to acknowledge valuable support from the NSF via DMS-1715044. U.S. Geological Survey researchers were supported by the Greater Everglades Priority Ecosystem Sciences program. Any use of trade, firm or product names is for descriptive purposes only and does not imply endorsement by the U.S. Government. %Guppy data generated by this study is available at doi: 10.5066/P9WXHRGQ \cite{guppy}.

\section{Appendix A: Stability Analysis of classical TYC model}
\label{app}
\subsection{Equilibrium and Stability Analysis when $\boldsymbol{\mu>0}$}
Let $(f^*,m^*,s^*)$ represent an equilibrium of model \eqref{TYC_1}. We are interested in the region $\Omega = \{(f,m,s)~|~ 0 \leq f \leq K,~ 0 \leq m \leq K,~ 0 \leq s \leq K,~ 0 \leq f+m+s \leq K \}$. The dynamics are investigated for the model with a positive introduction rate, $\mu>0$.

\subsubsection{Equilibrium analysis}  An equilibrium with $f^*=0$ denotes a steady state of model \eqref{TYC_1} at which the wild-type XX females are eradicated. There is only one equilibrium that satisfies $f^*=0$, that is, $\left(0,0,\dfrac{\mu}{\delta}\right)$. 

In contrast, assume $f^*>0$ then by direct calculation the equilibrium satisfies
\beq%\begin{equation}
\label{jj_2}
\begin{split}
& f^*=K-m^*-s^*-\frac{2K \delta}{m^* \beta},\\
& s^*=\frac{\mu}{\delta},
\end{split}
\eeq%end{equation}
where $m^*$ satisfies
\begin{equation}
\label{jj_3}
a m^3+b m^2+c m+d=0.
\end{equation}
The constants $a,~ b,~ c$ and $d$ are given by
\beq%\begin{equation}
\label{jj_4}
\begin{split}
& a=a(s^*)=2 \beta, \\
& b=b(s^*)=3\beta s^*-K\beta, \\
& c=c(s^*)=2\beta s^{*2}+2K\delta-2K\beta s^*,\\
& d=d(s^*)=4K\delta s^*.
\end{split}
\eeq%\end{equation}
Clearly $a, d>0$ since $\mu, ~\beta$, and $K$ are positive. Let 
\beqn%\begin{eqnarray}
s_c^{+}&=& \frac{K}{2}+\sqrt{\frac{K^2}{4}-\frac{K\delta}{\beta}},\\
s_c^{-}&=& \frac{K}{2}-\sqrt{\frac{K^2}{4}-\frac{K\delta}{\beta}},\\
s_b    &=& \frac{K}{3}.
\eeqn%\end{eqnarray}
It is easy to check that $s_c^{\pm}$ and $s_b$ are the nonzero roots for $c(s^*)=0$ and $b(s^*)=0$, respectively.  Define $\alpha=\frac{K \beta}{\delta}$. For illustrative purposes, the population data provides the following estimates: $\alpha \approx 30.375$, $s_c^+ \approx 404.925$, $s_c^-\approx .075014$, and $s_b \approx 135$.  

\begin{lemma} ~ \label{lm_1}
\begin{enumerate}[label=(\alph*)]
\item If $\alpha>\frac{9}{2}$ then $s_c^{-}<s_b<s_c^{+}$;

\item If $4<\alpha<\frac{9}{2}$ then $s_b<s_c^{-}<s_c^{+}$;

\item If $\alpha=4$ then $s_b< s_c^{-}=s_c^{+}$;

\item If $\alpha=\frac{9}{2}$ then $s_b=s_c^{-}<s_c^{+}$.
\end{enumerate}
\end{lemma}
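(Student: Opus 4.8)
The plan is to collapse the whole statement into a single one‑variable comparison by factoring out $K$ and re‑expressing everything in terms of $\alpha = K\beta/\delta$. First I would rewrite the radical as $\sqrt{K^{2}/4 - K\delta/\beta} = \tfrac{K}{2}\sqrt{1 - 4/\alpha}$, so that $s_c^{\pm} = \tfrac{K}{2}\bigl(1 \pm \sqrt{1-4/\alpha}\bigr)$ and $s_b = \tfrac{K}{3}$. In particular $s_c^{\pm}$ are real exactly when $\alpha \geq 4$, which is precisely the range covered by (b)--(d) and is implied by $\alpha > 9/2$ in (a); hence the statement is non‑vacuous in each case.

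Next I would dispose of the easy comparisons. Since $s_c^{+} \geq K/2 > K/3 = s_b$ whenever the radical is defined, we always have $s_b < s_c^{+}$, so that inequality is common to all four conclusions and needs no further comment. Likewise $s_c^{-} < s_c^{+}$ iff $\sqrt{1-4/\alpha} > 0$ iff $\alpha > 4$, and $s_c^{-} = s_c^{+} = K/2$ iff $\alpha = 4$; this settles the ``$s_c^{-}=s_c^{+}$'' in (c) and the ``$s_c^{-}<s_c^{+}$'' in (a), (b), (d).

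The heart of the argument is the comparison of $s_b$ with $s_c^{-}$. Dividing by $K$, the inequality $s_c^{-} < s_b$ is equivalent to $1 - \sqrt{1-4/\alpha} < 2/3$, i.e.\ $\sqrt{1-4/\alpha} > 1/3$; since for $\alpha \geq 4$ both sides are nonnegative, squaring is reversible and gives the equivalent condition $1 - 4/\alpha > 1/9$, i.e.\ $\alpha > 9/2$. Reading this chain of equivalences with $<$, $=$, and $>$ in turn yields $s_c^{-} < s_b \iff \alpha > 9/2$, $\ s_c^{-} = s_b \iff \alpha = 9/2$, and $s_c^{-} > s_b \iff 4 \leq \alpha < 9/2$. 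Combining each of these with the facts already established ($s_b < s_c^{+}$, and $s_c^{-} < s_c^{+}$ unless $\alpha = 4$) produces exactly the four orderings (a)--(d).

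There is no real obstacle here; the only point requiring care is the legitimacy of squaring in the key step, which is fine because $\sqrt{1-4/\alpha}$ and $1/3$ are both nonnegative for $\alpha \geq 4$, so squaring preserves the direction of the inequality. As a sanity check I would note that the data values $\alpha \approx 30.375$, $s_c^{+} \approx 404.925$, $s_c^{-} \approx 0.075$, $s_b \approx 135$ are consistent with case (a), as expected since $30.375 > 9/2$.
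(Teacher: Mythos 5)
Your proposal is correct, and it is precisely the ``direct calculation'' that the paper's proof invokes without writing out: rescaling to $s_c^{\pm}=\tfrac{K}{2}\bigl(1\pm\sqrt{1-4/\alpha}\bigr)$ and reducing everything to the single comparison $\sqrt{1-4/\alpha}\ \lessgtr\ \tfrac13$, which squares (legitimately, since both sides are nonnegative for $\alpha\geq 4$) to the threshold $\alpha=\tfrac92$. Nothing further is needed.
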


\begin{proof}
It is easy to be verified by direct calculations.
\end{proof}

Next, consider the discriminant of \eqref{jj_3}, that is,
$$ \Delta=b^2 c^2-4a c^3 -4b^3 d-27 a^2 d^2+18 a b c d. $$
\begin{proposition}
\label{prop_1}
Assume that $ \Delta >0$. 
\begin{enumerate}[label=(\alph*)]
\item If $\alpha>\frac{9}{2}$, the \eqref{jj_3} has  2 positive roots when $ \frac{\mu}{\delta}<s_c^+ $ and no positive root when $\frac{\mu}{\delta} \geq s_c^+.$ 

\item If $s^* \in [s_b, s_c^-] \cup [s_c^+, \infty)$, then \eqref{jj_3} has 2 positive roots when $\frac{\mu}{\delta}  \in (0, s_b) \cup (s_c^-, s_c^+)$ and no positive root when $\frac{\mu}{\delta}  \in [s_b, s_c^-] \cup [s_c^+, \infty)$. 
\end{enumerate}

\end{proposition}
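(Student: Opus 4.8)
The plan is to reduce both parts to one elementary fact about the cubic $P(m)=am^{3}+bm^{2}+cm+d$ of \eqref{jj_3}, using that by \eqref{jj_4} one always has $a=2\beta>0$ and $d=4K\delta s^{*}>0$ (here $s^{*}=\mu/\delta>0$). Under the standing hypothesis $\Delta>0$ the three roots of $P$ are real and distinct, and none of them is $0$ since $d\neq 0$; moreover their product equals $-d/a<0$, so an odd number of them — one or three — is negative. I claim
\[
\#\{\text{positive roots of }P\}=\begin{cases}0,& b\geq 0\ \text{and}\ c\geq 0,\\ 2,& b<0\ \text{or}\ c<0.\end{cases}
\]
Indeed, if $b\geq 0$ and $c\geq 0$ then $P(m)=am^{3}+bm^{2}+cm+d>0$ for every $m>0$, so $P$ has no positive root and all three real roots are negative. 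Conversely, if all three roots were negative, Vieta's formulas would give $-b/a=\sum r_{i}<0$ and $c/a=\sum_{i<j}r_{i}r_{j}>0$, i.e.\ $b>0$ and $c>0$; hence whenever $b<0$ or $c<0$ exactly one root is negative and the remaining two are positive.

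Next I would record the signs of $b(s^{*})$ and $c(s^{*})$. From \eqref{jj_4}, $b(s^{*})=\beta(3s^{*}-K)$, so $b(s^{*})<0$ iff $s^{*}<s_{b}$ and $b(s^{*})\geq 0$ iff $s^{*}\geq s_{b}$. Also $c(s^{*})=2\beta\bigl(s^{*2}-Ks^{*}+K\delta/\beta\bigr)$ is an upward parabola in $s^{*}$ whose real roots exist precisely when $\alpha=K\beta/\delta\geq 4$ and then equal $s_{c}^{\pm}$; hence $c(s^{*})<0$ iff $s_{c}^{-}<s^{*}<s_{c}^{+}$, and $c(s^{*})\geq 0$ otherwise. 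Combining these two sign descriptions with the displayed dichotomy, the number of positive roots of \eqref{jj_3} is $0$ exactly on the set $\{s^{*}\geq s_{b}\}\cap(\{s^{*}\leq s_{c}^{-}\}\cup\{s^{*}\geq s_{c}^{+}\})$ and $2$ on its complement within $s^{*}>0$.

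It then remains to intersect that set with the orderings of $s_{b},s_{c}^{-},s_{c}^{+}$ supplied by Lemma~\ref{lm_1}. For part (a), $\alpha>9/2$ gives $s_{c}^{-}<s_{b}<s_{c}^{+}$; thus $\mu/\delta\geq s_{c}^{+}$ forces $s^{*}>s_{b}$ and $s^{*}\geq s_{c}^{+}$, hence no positive root, while $\mu/\delta<s_{c}^{+}$ means either $s^{*}<s_{b}$ (so $b<0$) or $s_{b}\leq s^{*}<s_{c}^{+}$ (so $s_{c}^{-}<s^{*}<s_{c}^{+}$ and $c<0$), hence two positive roots. For part (b) I would read the hypothesis as the regime $4<\alpha<9/2$ of Lemma~\ref{lm_1}(b), in which $s_{b}<s_{c}^{-}<s_{c}^{+}$ so that the written intervals $[s_{b},s_{c}^{-}]$ are nonempty; then the set on which \eqref{jj_3} has no positive root is exactly $[s_{b},s_{c}^{-}]\cup[s_{c}^{+},\infty)$ and its complement in $s^{*}>0$ is $(0,s_{b})\cup(s_{c}^{-},s_{c}^{+})$, which is the assertion.

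The elementary root-counting step and the sign computations are routine; the only points I would treat with care are the bookkeeping at the boundary values $s^{*}\in\{s_{b},s_{c}^{-},s_{c}^{+}\}$ (checking these are not excluded by $\Delta>0$, i.e.\ do not force a repeated root) and the precise reading of the hypothesis of (b), since as literally written it coincides with part of the conclusion and is evidently meant to be the parameter constraint $4<\alpha<9/2$ coming from Lemma~\ref{lm_1}.
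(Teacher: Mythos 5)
Your proof is correct, and it reaches the proposition by a mildly different elementary device than the paper. The paper fixes the sign pattern of $(a,b,c,d)$ on each interval cut out by $s_b,s_c^{\pm}$ and invokes Descartes' rule of signs, using $\Delta>0$ (three distinct nonzero real roots) to break the resulting ``2 or 0'' ambiguity, treating the boundary values $s^*=s_c^-,s_b$ as separate subcases; you instead prove one clean dichotomy via Vieta's formulas --- with $a>0$, $d>0$ and all roots real and nonzero, \eqref{jj_3} has no positive root when $b\geq0$ and $c\geq0$ (positivity of the cubic on $(0,\infty)$), and exactly two positive roots when $b<0$ or $c<0$ (the product of roots is $-d/a<0$, so an odd number are negative, and all three negative would force $b>0$, $c>0$) --- and then carry out the same sign bookkeeping for $b(s^*)$, $c(s^*)$ against Lemma~\ref{lm_1}. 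The Vieta packaging handles the borderline values $s^*\in\{s_b,s_c^{\pm}\}$ uniformly, which is a small gain in tidiness over the paper's case-by-case Descartes argument, and the substance is otherwise the same. On part (b), you read the hypothesis as the regime $4<\alpha<\tfrac{9}{2}$ of Lemma~\ref{lm_1}; the paper's own proof reads it as $\alpha\leq\tfrac{9}{2}$ (so that $s_b\leq s_c^-$), which is the same resolution of the awkwardly worded hypothesis except that it nominally also admits $\alpha\leq4$, where $s_c^{\pm}$ cease to be real and the interval statements degenerate, so your restriction loses nothing of what the paper actually establishes.
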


\begin{proof}

Define
\begin{equation}
\label{jj_5}
h(m)=a m^3+b m^2+c m+d.
\end{equation}
Since $a>0, d>0$ and $\Delta>0$, the cubic function \eqref{jj_5} has 3 distinct nonzero real roots. 

\begin{enumerate}
\item [(a)] If $\alpha>\frac{9}{2}$, by lemma \eqref{lm_1}, we have $s_c^{-}<s_b<s_c^{+}$.

\begin{enumerate}[label=(\roman*)]
\item Assume $s^*=\frac{\mu}{\delta} \in (0, s_c^-)$. Then $b<0$ and $c>0$. The signs of coefficients of $h(m)$ and $h(-m)$ are 
\beqn%gin{eqnarray}
\label{jj_6}
\mbox{sign}(a,b,c,d)&=&(+,-,+,+), \\
\label{jj_7}
\mbox{sign}(-a,b,-c,d)&=&(-,-,-,+).
\eeqn%end{eqnarray}
Thus, two sign changes in $h(m)$ and one sign changes in $h(-m)$. By Descartes' rule of signs, $h(m)=0$ has either 2 or 0 positive roots and exactly 1 negative root. Since $h(m)=0$ has three distinct nonzero real roots, the roots of \eqref{jj_5} has to be 2 positive roots and 1 negative root. 

\item Assume $s^*=\frac{\mu}{\delta} \in [s_c^-, s_b)$. If $s^*=s_c^-$, then $c=0, b<0$; if $s^{*} \in (s_c^-, s_b)$, then $c<0, b<0$. By Descartes' rule of signs, it implies $h(m)$ has 2 positive roots and 1 negative one. 

\item Assume $s^*=\frac{\mu}{\delta} \in [s_b, s_c^+)$. If $s^*=s_b$, then $b=0, c<0$; if $s^* \in (s_b, s_c^+)$, then $b>0, c<0$. By a similar argument, $h(m)$ has 2 positive roots and 1 negative root.

\item Assume $s^*=\frac{\mu}{\delta} \in [s_c^+, \infty)$.  Consequently, $b,c>0$. We have no positive roots for $h(m)$ because there is no sign change in the coefficient of $h(m)$.
\end{enumerate}

\item [(b)] If $\alpha \leq \frac{9}{2}$, we always have $s_b \leq s_c^{-}$. Similarly, 
\begin{enumerate}[label=(\roman*)]
\item If $s^* \in (0, s_b) \cup (s_c^-, s_c^+)$, then either $b<0, c>0$ or $b>0, c<0$. There are two sign changes for the coefficient of $h(m)$, therefore $h(m)$ has 2 positive roots. 

\item If $s^* \in [s_b, s_c^-] \cup [s_c^+, \infty)$, $b \geq 0, c \geq 0$, there is no sign change for the coefficient of $h(m)$ and therefore $h(m)$ has no positive root. 
\end{enumerate}
\end{enumerate}

\end{proof}

\begin{proposition}
\label{prop_2}
Assume that $\Delta=0$.
\begin{enumerate}[label=(\alph*)]
\item If $\alpha>\frac{9}{2}$, then \eqref{jj_3} has 2 positive roots when $ \frac{\mu}{\delta}<s_c^+ $ and no positive root when $\frac{\mu}{\delta} \geq s_c^+.$ 

\item If $s^* \in [s_b, s_c^-] \cup [s_c^+, \infty)$, then \eqref{jj_3} has 2 positive roots when $\frac{\mu}{\delta}  \in (0, s_b) \cup (s_c^-, s_c^+)$ and no positive root when $\frac{\mu}{\delta}  \in [s_b, s_c^-] \cup [s_c^+, \infty)$. 
\end{enumerate}
\end{proposition}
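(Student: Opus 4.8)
The plan is to follow the proof of Proposition \ref{prop_1} essentially line by line, the only change being that ``three distinct nonzero real roots'' gets replaced by ``three nonzero real roots counted with multiplicity, at least one of them repeated.'' First I would record the two elementary facts that make this substitution legitimate: for a cubic with real coefficients the condition $\Delta=0$ forces all three roots to be real (a complex root would have to appear with its conjugate, leaving no room for the forced multiple root), and $d=4K\delta s^{*}>0$ keeps $m=0$ off the root set. Hence, under the standing hypothesis $\Delta=0$, the polynomial $h(m)=am^3+bm^2+cm+d$ from \eqref{jj_5} has exactly three real roots, all nonzero, counted with multiplicity, and the rest of the argument is just Descartes' rule of signs.

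Next I would reuse, unchanged, the sign bookkeeping already set up for Proposition \ref{prop_1}: $a=2\beta>0$ and $d>0$; the coefficient $b=3\beta s^{*}-K\beta$ changes sign across $s_b=K/3$; the coefficient $c=2\beta s^{*2}-2K\beta s^{*}+2K\delta$ changes sign across $s_c^{-}$ and $s_c^{+}$; and the relative ordering of $s_b,s_c^{-},s_c^{+}$ is governed by Lemma \ref{lm_1} according to whether $\alpha=\tfrac{K\beta}{\delta}>\tfrac{9}{2}$ or $\alpha\le\tfrac{9}{2}$. For part (a) ($\alpha>\tfrac{9}{2}$, so $s_c^{-}<s_b<s_c^{+}$) I would walk through the subintervals $(0,s_c^{-})$, $[s_c^{-},s_b)$, $[s_b,s_c^{+})$ for $s^{*}=\tfrac{\mu}{\delta}$: in each one the coefficient string $(a,b,c,d)$ has exactly two sign variations (dropping any coefficient that vanishes at an endpoint), while the string of $h(-m)$, namely $(-a,b,-c,d)$, has exactly one; Descartes' rule then gives ``two or zero'' positive roots and ``exactly one'' negative root, and since all three real roots are nonzero this pins the positive count at two. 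On $[s_c^{+},\infty)$ the string $(a,b,c,d)$ has all entries nonnegative, so $h(m)$ has no positive root. Part (b) ($\alpha\le\tfrac{9}{2}$, so $s_b\le s_c^{-}$) is handled by the identical argument applied to $(0,s_b)\cup(s_c^{-},s_c^{+})$ (two sign variations, hence two positive roots) versus $[s_b,s_c^{-}]\cup[s_c^{+},\infty)$ (no sign variation, hence no positive root).

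The one point that genuinely differs from Proposition \ref{prop_1}, and the place where I would slow down rather than where I expect real difficulty, is the reading of ``two positive roots'' when $\Delta=0$. Descartes' rule counts only with multiplicity, and since $\Delta=0$ forbids three distinct real roots while the analysis already forces the lone negative root to be simple, the two positive roots must coincide — the graph of $h$ is tangent to the $m$-axis at a single positive value. This is fully consistent with the statement provided ``positive roots'' is understood with multiplicity, matching the convention in Proposition \ref{prop_1}, and I would add a clause flagging it. Beyond that the proof is a short copy of the sign-counting argument and presents no further obstacle.
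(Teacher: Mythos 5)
Your proposal is correct and follows essentially the same route as the paper, whose proof of Proposition \ref{prop_2} is just the remark that $\Delta=0$ forces a repeated root with all roots real, after which Descartes' rule of signs is applied exactly as in Proposition \ref{prop_1}. You simply spell out the sign-counting details and correctly flag the one subtlety the paper leaves implicit, namely that the ``two positive roots'' are a positive double root, i.e.\ counted with multiplicity.
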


\begin{proof}
If $\Delta=0, h(m)=0$ has a repeated root and all its roots are real. The results follow by Descartes' rule of signs. 
\end{proof}

\begin{proposition}
\label{prop_3}
Assume that $\Delta<0$. The equation \eqref{jj_3} has no positive root. 
\end{proposition}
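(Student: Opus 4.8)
The plan is to exploit the structure that a negative discriminant forces on a real cubic: when $\Delta<0$ the polynomial $h(m)=am^{3}+bm^{2}+cm+d$ has exactly one real root and a pair of non-real complex conjugate roots, and the sign of that single real root can then be read off from Vieta's formula for the product of the roots. This sidesteps any case analysis on the signs of $b$ and $c$.

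First I would record the standing facts of the $\mu>0$ regime. Since $\mu>0$ we have $s^{*}=\mu/\delta>0$, so that $a=2\beta>0$ and $d=4K\delta s^{*}>0$ strictly; in particular $h(0)=d\neq 0$, so $m=0$ is never a root of \eqref{jj_3}. Next, because the discriminant in the excerpt is written in the standard form $\Delta=b^{2}c^{2}-4ac^{3}-4b^{3}d-27a^{2}d^{2}+18abcd$, the hypothesis $\Delta<0$ implies $h$ has precisely one real root $r$ and two roots $z,\bar z\in\mathbb{C}\setminus\mathbb{R}$. Since $z$ is non-real and nonzero, $z\bar z=|z|^{2}>0$ strictly.

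Then I would invoke Vieta: for $am^{3}+bm^{2}+cm+d=0$ the product of the three roots equals $-d/a$. Hence $r\,|z|^{2}=r\,z\,\bar z=-d/a<0$, and dividing through by the positive quantity $|z|^{2}$ yields $r<0$. Therefore the unique real root of \eqref{jj_3} is strictly negative, and \eqref{jj_3} has no positive (indeed, no nonnegative) root, which is exactly the assertion of the proposition.

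I do not anticipate a real obstacle here; the only points needing care are (i) checking that the discriminant convention used in the excerpt is the standard one, so that $\Delta<0$ genuinely corresponds to ``one real, two non-real'' rather than the reverse, and (ii) using $\mu>0$ to guarantee $d>0$ \emph{strictly}, which is what rules out $m=0$ as a root and makes the complex pair genuinely non-real with positive modulus. A more laborious alternative would imitate the Descartes'-rule-of-signs bookkeeping of Proposition \ref{prop_1}, splitting into subcases according to the sign of $b$ and $c$ as $s^{*}$ moves through the intervals determined by $s_{b},s_{c}^{-},s_{c}^{+}$; but the product-of-roots argument handles every configuration of $b,c$ simultaneously and is much shorter.
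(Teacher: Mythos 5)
Your proof is correct, and it takes a genuinely different route from the paper. The paper's proof simply notes that $\Delta<0$ gives one real root and a complex-conjugate pair, and then says ``the results follow by Descartes' rule of signs'' --- implicitly leaning on the sign bookkeeping of Proposition \ref{prop_1}, where every configuration of $\mathrm{sign}(b),\mathrm{sign}(c)$ with $a,d>0$ yields an even number (two or zero) of sign changes, so by the parity clause of Descartes' rule the number of positive roots is even; since only one real root exists, that number must be zero. You instead bypass all case analysis with Vieta's formula: the product of the three roots is $-d/a<0$ (using $\mu>0$ to get $d=4K\delta s^{*}>0$ strictly and $a=2\beta>0$), and since the non-real pair contributes $z\bar z=|z|^{2}>0$, the unique real root must be strictly negative. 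Your argument is shorter, uniform in $b,c$, and gives slightly more (the real root is negative, and $m=0$ is excluded since $h(0)=d>0$), whereas the paper's argument reuses machinery already set up for Propositions \ref{prop_1} and \ref{prop_2}; both are sound, and your check of the discriminant convention is the right point of care. As a side remark, the paper's proof text opens with ``If $\Delta=0$'' where it plainly means $\Delta<0$ --- your version does not inherit that slip.
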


\begin{proof}
If $\Delta=0,$ then $h(m)=0$ has one real root and two non-real complex conjugate roots. The results follow by Descartes' rule of signs. 
\end{proof}

\subsubsection{Stability analysis of equilibria}
The associated Jacobian matrix of the model \eqref{TYC_1} is given by 
\begin{equation} \label{Jac_1} 
\mathrm{J}=\left[ \begin{array}{ccc}
J_ {11}& J_{12}  & J_{13} \\ 
J_{21} & J_{22} & J_{23} \\ 
0 & 0 & J_{33}  \end{array}
\right] 
\end{equation} 
where
\beqn%\begin{eqnarray}
\label{jj_8}
J_{11} &=& \frac{1}{2} m^* \beta L^* -\frac{1}{2}  \frac{f^* m^* \beta}{K} -\delta, \\
\label{jj_9}
J_{12} &=& \frac{1}{2} f^* \beta L^* - \frac{1}{2} \frac{f^* m^* \beta}{K}, \\
\label{jj_10}
J_{13} &=& - \frac{1}{2} \frac{f^* m^* \beta}{K}, \\
\label{jj_11}
J_{21} &=& (\frac{1}{2}m^*+s^*) \beta L^*-\frac{(\frac{1}{2}f^* m^*+f^* s^*)\beta}{K}, \\
\label{jj_12}
J_{22} &=&\frac{1}{2}f^*\beta L^* -\frac{(\frac{1}{2}f^* m^*+ f^* s^*)\beta}{K}-\delta, \\
\label{jj_13}
J_{23} &=&f^* \beta L^* -\frac{(\frac{1}{2}f^* m^*+ f^* s^*)\beta}{K}, \\
\label{jj_14}
J_{33} &=&-\delta,
\eeqn%\end{eqnarray}
and
\beq%\begin{equation}
\label{jj_15}
L^*=1-\frac{f^*+m^*+s^*}{K}.
\eeq%\end{equation}

The corresponding characteristic function is given by 
\beq%\begin{equation}
\label{jj_16}
\lambda^3+k_{12} \lambda^2+ k_{11} \lambda+ k_{10}=0
\eeq%\end{equation}
with 
\beq%\begin{equation}
\label{jj_17}
\begin{split}
& k_{12}=-(J_{11}+J_{22}+J_{33}),\\
& k_{11}=J_{11} J_{22}+J_{11}J_{33}-J_{12}J_{21}+J_{22}J_{33},\\
& k_{10}=J_{12}J_{21}J_{33}-J_{11}J_{22}J_{33}.
\end{split}
\eeq%\end{equation}

\begin{theorem}
\label{thm_1}
Let $\mu>0$. The interior equilibrium $(f^*, m^*, \frac{\mu}{\delta})$ where $f^*,m^*>0$ is locally stable if 
\begin{equation}
k_{10}>0,~ k_{11}>0,~ k_{12}>0,~ k_{11}k_{12}-k_{10}>0.
\end{equation}
\end{theorem}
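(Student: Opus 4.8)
The plan is to recognize that Theorem~\ref{thm_1} is a direct instance of the Routh--Hurwitz stability criterion for a cubic, applied to the characteristic polynomial \eqref{jj_16}; once the setup is in place, the proof is essentially one line.

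First I would invoke linearized stability: by the Hartman--Grobman theorem, the interior equilibrium $(f^*,m^*,\mu/\delta)$ with $f^*,m^*>0$ is locally asymptotically stable whenever every root $\lambda$ of $\lambda^3+k_{12}\lambda^2+k_{11}\lambda+k_{10}=0$ has strictly negative real part, where the $k_{1i}$ are given by \eqref{jj_17} in terms of the Jacobian entries \eqref{jj_8}--\eqref{jj_14}. As a sanity check on \eqref{jj_17}, note that since $J_{31}=J_{32}=0$ in \eqref{Jac_1}, expanding $\det(\lambda I-\mathrm{J})$ along the last row factors the polynomial as $(\lambda-J_{33})\bigl(\lambda^2-(J_{11}+J_{22})\lambda+J_{11}J_{22}-J_{12}J_{21}\bigr)$, and multiplying out recovers \eqref{jj_16}--\eqref{jj_17}; this factorization is not otherwise needed for the argument.

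Next I would form the Routh array for $\lambda^3+k_{12}\lambda^2+k_{11}\lambda+k_{10}$, whose first column is $1$, $k_{12}$, $(k_{11}k_{12}-k_{10})/k_{12}$, $k_{10}$. The Routh--Hurwitz theorem gives that all roots lie in the open left half-plane if and only if these entries are all positive, i.e.\ $k_{12}>0$, $k_{11}k_{12}-k_{10}>0$, $k_{10}>0$; and these force $k_{11}>0$ automatically, since $k_{11}k_{12}>k_{10}>0$ together with $k_{12}>0$. This is precisely the hypothesis of the theorem (which merely lists $k_{11}>0$ redundantly), so the conclusion follows.

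There is no genuine obstacle in the proof itself; the only labor is upstream, in correctly deriving \eqref{jj_16}--\eqref{jj_17}. If one instead wanted the stability criterion expressed directly in the model parameters $\beta,\delta,K,\mu$ and the components of $(f^*,m^*)$, the tedious step would be substituting \eqref{jj_8}--\eqref{jj_14} into $k_{10},k_{11},k_{12}$ and into $k_{11}k_{12}-k_{10}$ and simplifying the resulting expressions — but since the statement is phrased in terms of the $k_{1i}$, that is unnecessary here.
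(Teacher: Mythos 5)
Your proposal is correct and matches the paper's own argument, which simply invokes the Routh--Hurwitz criterion applied to the cubic characteristic polynomial \eqref{jj_16} with coefficients \eqref{jj_17}. Your additional observations (the factorization $(\lambda-J_{33})\bigl(\lambda^2-(J_{11}+J_{22})\lambda+J_{11}J_{22}-J_{12}J_{21}\bigr)$ and the redundancy of the condition $k_{11}>0$) are accurate but do not change the route.
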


\begin{proof}
It follows from Routh Hurwitz stability criteria. 
\end{proof}

\section{Appendix B: Nonlinear Forms of Harvesting for FHMS Models}
\label{AppendixB}
In the previous two sections linear harvesting functions considered only. Here, we compare the previous results to nonlinear harvesting functions. That is, we examine the control on the system
\begin{equation}
\label{PH_0}
\begin{split}
& \frac{df}{dt}=\frac{1}{2}fm\beta L-\delta f-\eta_1 G_1(f),\\
& \frac{dm}{dt}=\frac{1}{2}fm\beta L-\delta m+\eta_2 G_2(m),
\end{split}
\end{equation}
where $G_1(f)$ and $G_2(m)$ are non-negative, possibly nonlinear, functions such that $\{f,m\} \in \{0 \leq f,m \leq K$ and $0 \leq f+m \leq K \}$. In the previous section $G_1(f)=f$ and $G_2(m)=m$; that model will be called \textit{Model 1} in the forthcoming discussion. %and $\eta_1, \eta_2$ are time dependent. 

%We will state two nonlinear harvesting functions and compare the optimal states with model \eqref{TYC_3}.

\subsection{Model 2: Nonlinear Harvesting when $\boldsymbol{G_1(f)=\frac{f}{f+d_1},~ G_2(m)=\frac{m}{m+d_2}}$}
Consider the model
\begin{equation}
\label{HP_2}
\begin{split}
& \frac{df}{dt}=\frac{1}{2}fm\beta L-\delta f-\eta_1 \frac{f}{f+d_1},\\
& \frac{dm}{dt}=\frac{1}{2}fm\beta L-\delta m+\eta_2 \frac{m}{m+d_2},
\end{split}
\end{equation}
where $d_1>0, d_2>0$.  The nonlinear harvesting function are rational functions and attempt to model a saturation of harvesting at large populations.  

\begin{theorem}
\label{HP2_global}
The trivial equilibrium of \eqref{HP_2} is globally asymptotically stable if $\beta K<2\delta+\frac{\eta_1}{K+d_1}-\frac{\eta_2}{d_2}$ and $\delta>\frac{\eta_2}{d_2}$.
\end{theorem}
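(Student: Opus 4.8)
The plan is to follow the Lyapunov approach of Theorems~\ref{thm_global_3} and~\ref{NPH1_global}. Take $V=f+m$ on the feasible region $\{\,0\le f,m,\ f+m\le K\,\}$, which is positively invariant and bounded, so that it suffices to show $\frac{dV}{dt}<0$ for every $(f,m)\neq(0,0)$. Adding the two equations of \eqref{HP_2} gives
\[
\frac{dV}{dt}=fm\beta L-\delta f-\delta m-\eta_1\frac{f}{f+d_1}+\eta_2\frac{m}{m+d_2}.
\]

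First I would bound each term by an expression linear in $f$ and $m$. From $L\le1$ we get $fm\beta L\le fm\beta$. For the harvesting term, $0\le f\le K$ gives $f+d_1\le K+d_1$, hence $\frac{f}{f+d_1}\ge\frac{f}{K+d_1}$ and so $-\eta_1\frac{f}{f+d_1}\le-\frac{\eta_1}{K+d_1}f$. For the stocking term, $m+d_2\ge d_2$ gives $\eta_2\frac{m}{m+d_2}\le\frac{\eta_2}{d_2}m$. Combining,
\[
\frac{dV}{dt}\le fm\beta-\Bigl(\delta+\tfrac{\eta_1}{K+d_1}\Bigr)f-\Bigl(\delta-\tfrac{\eta_2}{d_2}\Bigr)m,
\]
where the coefficient of $m$ is positive by the hypothesis $\delta>\eta_2/d_2$ and that of $f$ is obviously positive. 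On the interior $f,m>0$ I would factor out $fm$ and use $1/f,1/m\ge1/K$ together with the positivity of the two bracketed constants to obtain
\[
\frac{dV}{dt}\le\Bigl[\beta-\tfrac1K\Bigl(\delta+\tfrac{\eta_1}{K+d_1}\Bigr)-\tfrac1K\Bigl(\delta-\tfrac{\eta_2}{d_2}\Bigr)\Bigr]fm,
\]
which is strictly negative precisely when $\beta K<2\delta+\frac{\eta_1}{K+d_1}-\frac{\eta_2}{d_2}$, the stated condition. The two edges of the region are handled directly: if $f=0<m$ then $\frac{dV}{dt}=-\delta m+\eta_2\frac{m}{m+d_2}\le-(\delta-\frac{\eta_2}{d_2})m<0$, and if $m=0<f$ then $\frac{dV}{dt}=-\delta f-\eta_1\frac{f}{f+d_1}<0$. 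Hence $V$ is a strict Lyapunov function and $(0,0)$ is globally asymptotically stable.

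There is no serious obstacle; the only delicate point is the choice of the two one-sided estimates for the rational terms — bounding the harvesting rate below by $f/(K+d_1)$ (so that it still helps) while bounding the stocking rate above by $m/d_2$ (so that it stays controllable). These two choices are exactly what produce the extra factors $\eta_1/(K+d_1)$ and $\eta_2/d_2$ and force the accompanying hypothesis $\delta>\eta_2/d_2$, which is needed so that the coefficient of $m$ retains the correct sign when $1/m$ is replaced by $1/K$.
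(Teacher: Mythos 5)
Your proposal is correct and follows essentially the same argument as the paper: the Lyapunov function $V=f+m$, the bound $L\le 1$, the estimates $\frac{f}{f+d_1}\ge\frac{f}{K+d_1}$ and $\frac{m}{m+d_2}\le\frac{m}{d_2}$, and then factoring out $fm$ with $1/f,1/m\ge 1/K$ under the sign conditions. The only difference is that you treat the boundary cases $f=0$ or $m=0$ separately, which is a small but welcome extra bit of rigor the paper omits.
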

\begin{proof}
Consider the Lyapunov function $V=f+m$. It is left to show that $\frac{dV}{dt}<0$ for all $(f,m) \not =(0,0)$. Consider 
\beq%\begin{equation}
\begin{split}
\frac{dV}{dt}&=\frac{df}{dt}+\frac{dm}{dt}\\
&=fm\beta L-(\delta+\frac{\eta_1}{f+d_1})f-(\delta-\frac{\eta_2}{m+d_2})m\\
&\leq fm\beta-(\delta+\frac{\eta_1}{f+d_1})f-(\delta-\frac{\eta_2}{m+d_2})m, \quad \mathrm{since} \quad L \leq 1 \\
&\leq fm\beta-(\delta+\frac{\eta_1}{K+d_1})f-(\delta-\frac{\eta_2}{d_2})m\\
&\leq [\beta-\frac{1}{m}(\delta+\frac{\eta_1}{K+d_1})-\frac{1}{f}(\delta-\frac{\eta_2}{d_2})] fm\\
&\leq [\beta-\frac{1}{K}(\delta+\frac{\eta_1}{K+d_1})-\frac{1}{K}(\delta-\frac{\eta_2}{d_2})] fm.\\
\end{split}
\eeq%\end{equation}
It is enough to show $\beta-\frac{1}{K}(\delta+\frac{\eta_1}{K+d_1})-\frac{1}{K}(\delta-\frac{\eta_2}{d_2})<0$.  Now $\delta-\frac{\eta_2}{d_2}>0$ and, by direct calculation, $\beta K<2\delta+\frac{\eta_1}{K+d_1}-\frac{\eta_2}{d_2}$.  Subsequently, $\frac{dV}{dt}<0.$
\end{proof}

As in the previous section, assume $\eta_1, \eta_2$ are time dependent and set the objective function as 
\beq%\begin{equation}
  J_2(\eta_{1},\eta_{2})= \int^{T}_{0} -(f+m)- \frac{1}{2}(\eta_{1}^{2} +\eta_{2}^{2} ) dt,
\eeq%\end{equation}
subject to \eqref{HP_2} and with initial conditions $f(t_{0})=f_{0}, m(t_{0})=m_0$.  Optimal controls are sought within the set $U_2$ where
\beq%\begin{equation}
 U_2= \{ (\eta_{1},\eta_{2})~|~\eta_{i} \ \mbox{measurable}, \  0 \leq \eta_{1} \leq 1, 0 \leq \eta_{2} \leq 1, \  t \in [0,T], \ \forall T\}.
\eeq%\end{equation}
The goal is to seek an optimal $(\eta_{1}^{*}, \eta_{2}^{*})$ such that
\beq%\begin{equation}
\begin{split}
  J_2(\eta_{1}^{*}, \eta_{2}^{*}) &= \underset{(\eta_{1},\eta_{2})}{\max} \int^{T}_{0} -(f+m) - \frac{1}{2}(\eta_1^2+\eta_{2}^{2})  dt \\
  & = \underset{(\eta_{1},\eta_{2})}{\min} \int^{T}_{0} f+m + \frac{1}{2}(\eta_1^2+\eta_{2}^{2})  dt.
 \end{split}
\eeq%\end{equation}
Consider the following existence theorem.

\begin{theorem}
\label{PH2_thm_oct}
Consider the optimal control problem \eqref{HP_2}. There exists $(\eta_{1}^{*}, \eta_{2}^{*}) \in U_2$ such that
\beq%\begin{equation}
  J_2(\eta_{1}^{*}, \eta_{2}^{*}) = \underset{(\eta_{1},\eta_{2})}{\max} \int^{T}_{0} -(f+m) - \frac{1}{2}(\eta_1^2+\eta_{2}^{2})  dt.
\eeq%\end{equation}
\end{theorem}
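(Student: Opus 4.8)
The plan is to mirror the existence arguments already used for Theorems~\ref{thm_tyc_1_oct}, \ref{thm_tyc_3_oct} and \ref{thm_NPH1_oct}, invoking the classical existence result for optimal controls of Fleming and Rishel \cite{FR75}. The ingredients needed are: (i) for each admissible control pair the state system \eqref{HP_2} has a unique solution that remains in a fixed compact region of the form $\{0\le f,\ 0\le m,\ f+m\le K\}$; (ii) the admissible control set $U_2$ is nonempty, closed, bounded and convex; (iii) the right-hand side of \eqref{HP_2} satisfies a bound $|f'|+|m'|\le C(1+|f|+|m|)$ uniform in the controls, and is affine in $(\eta_1,\eta_2)$, so the set of admissible velocity vectors is convex at each state; and (iv) the running payoff $-(f+m)-\tfrac12(\eta_1^2+\eta_2^2)$ is concave in $(\eta_1,\eta_2)$ and bounded above. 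Once these are in place, the standard theorem yields a maximizing pair $(\eta_1^*,\eta_2^*)\in U_2$.

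First I would establish the state bounds. Positivity follows because $df/dt=0$ when $f=0$ and $dm/dt=0$ when $m=0$, so the nonnegative quadrant is invariant; boundedness follows because $L=1-(f+m)/K$ forces $d(f+m)/dt<0$ once $f+m$ is large --- the harvesting term $-\eta_1 f/(f+d_1)$ only helps, and the stocking term $\eta_2 m/(m+d_2)\le 1$ is a bounded perturbation --- so a region of the form $\Omega=\{0\le f,\ 0\le m,\ f+m\le K\}$ (slightly enlarged if needed) is positively invariant, exactly as in the proofs of Theorems~\ref{thm_global_3} and \ref{NPH1_global}. On this region the vector field, including the rational harvesting terms whose denominators $f+d_1,\ m+d_2$ are bounded away from zero since $d_1,d_2>0$, is $C^1$ and globally Lipschitz, so for each measurable control the trajectory exists, is unique, and depends continuously on the data on all of $[0,T]$.

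Next I would record that $U_2=\{(\eta_1,\eta_2)\ \text{measurable}:\ 0\le\eta_1\le1,\ 0\le\eta_2\le1\}$ is nonempty (e.g. $\eta_1\equiv\eta_2\equiv0$ is admissible and, by the previous step, yields a well-defined trajectory in $\Omega$), closed, bounded and convex, and that the controls enter \eqref{HP_2} linearly, so that the velocity set at each fixed state is convex. Combined with the sublinear growth bound from the first step and the concavity in $(\eta_1,\eta_2)$ of the integrand --- it is $-(f+m)$ plus the strictly concave quadratic $-\tfrac12(\eta_1^2+\eta_2^2)$, and on $\Omega$ it is bounded above by $0$ --- all hypotheses of the Fleming--Rishel existence theorem hold, and an optimal $(\eta_1^*,\eta_2^*)\in U_2$ exists. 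This is precisely the ``standard application \cite{FR75}'' invoked in the earlier existence proofs.

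I do not expect a genuine obstacle. The one point I would treat most explicitly is the a priori boundedness and invariance of the state region in the presence of the saturating harvesting and stocking terms, since the paper's own remark warns that TYC-type models can blow up in finite time; here the logistic factor $L$ together with the boundedness of the rational terms rules this out, just as in Theorems~\ref{thm_global_3} and \ref{NPH1_global}. Everything else --- closedness, boundedness and convexity of $U_2$, linearity of the dynamics in the controls, and concavity of the payoff --- is immediate.
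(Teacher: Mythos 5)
Your proposal is correct and follows essentially the same route as the paper: the paper simply notes the proof is "similar to Theorem \ref{thm_tyc_3_oct}" and omits it, and that argument is exactly the one you give — boundedness of the states and controls plus concavity of the integrand in $(\eta_1,\eta_2)$, invoking the standard existence theorem of Fleming--Rishel \cite{FR75}. You in fact supply more detail than the paper does (invariance and a priori bounds despite the saturating stocking term, convexity and closedness of $U_2$, affine dependence on the controls), all of which is consistent with the paper's intended argument.
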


\begin{proof}
The proof is similar to Theorem \ref{thm_tyc_3_oct} and is omitted for brevity.
\end{proof}

Consider the Hamiltonian for $J_2$,
\beq%\begin{equation}
\label{HP2_ham}
H_2=-(f+m) - \frac{1}{2}(\eta_1^2+\eta_{2}^{2}) +\lambda_1 f'+\lambda_2 m'.
\eeq%\end{equation}
The necessary conditions on the optimal controls are derived by applying Pontryagin's maximum principle. Namely,
\beq%\begin{equation}
\begin{split}
\lambda_1'(t) =& 1-\lambda_1 [\frac{m \beta }{2} (1-\frac{f+m}{K})-\frac{f m \beta}{2K}-\delta-\frac{\eta_1}{f+d_1}+\frac{\eta_1 f}{(f+d_1)^2}] \\
& -\lambda_2 [\frac{ m \beta}{2} (1-\frac{f+m}{K})-\frac{ f m \beta}{2K}],\\
\lambda_2'(t)= &1-\lambda_1 [\frac{f \beta }{2} (1-\frac{f+m}{K})-\frac{f m \beta}{2K}] \\
&-\lambda_2 [\frac{ f \beta}{2} (1-\frac{f+m}{K})-\frac{ f m \beta}{2K}-\delta+\frac{\eta_2}{m+d_2}-\frac{\eta_2 m}{(m+d_2)^2}],
\end{split}
\eeq%\end{equation}
with the transversality condition given as $\lambda_1(T)=\lambda_2(T)=0$. Differentiating with respect to our controls yields,
\beq%\begin{equation}
\begin{split}
& \frac{\partial H_2}{\partial \eta_1}=-\frac{f\lambda_1}{f+d_1}-\eta_1,\\
& \frac{\partial H_2}{\partial \eta_2}=\frac{m\lambda_2}{m+d_2}-\eta_2.
\end{split}
\eeq%\end{equation}
Hence, the optimal $\eta_1^*,$ is characterized by the three cases:

\begin{enumerate}
\item If $\frac{\partial H_2}{\partial \eta_{1}}<0$ then $\eta_{1}^{*}=0$.  This implies that $- \frac{f\lambda_1}{f+d_1}<0;$
\item If $\frac{\partial H_2}{\partial \eta_{1}}=0$ then $\eta_{1}^{*}=-\frac{f\lambda_1}{f+d_1}$.  This implies that $0 \leq - \frac{f\lambda_1}{f+d_1} \leq 1$;
\item If $\frac{\partial H_2}{\partial \eta_{1}}>0$ then $\eta_{1}^{*}=1$.  This implies that $1<- \frac{f\lambda_1}{f+d_1}.$
\end{enumerate}

Similar results are established for the optimal $\eta_2^*$. A compact way of writing the optimal control $\eta_1, \eta_2$ is 
\begin{eqnarray}
\label{HP2_1}
\eta_1^{*}&=&\min\left(1, \max\left(0, -\frac{f\lambda_1}{f+d_1}\right)\right)\\
\label{HP2_2}
\eta_2^{*}&=&\min\left(1, \max\left(0, \frac{ m \lambda_2}{m+d_2}\right)\right).
\end{eqnarray}

\begin{theorem}
An optimal control $(\eta_1^*, \eta_2^*) \in U_2$ for the system \eqref{HP_2} that maximizes the objective functional $J_2$ is characterized by \eqref{HP2_1}-\eqref{HP2_2}.
\end{theorem}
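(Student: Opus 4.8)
The plan is to assemble the ingredients already laid out above and invoke Pontryagin's maximum principle in its standard form for box-constrained controls. First I would note that Theorem~\ref{PH2_thm_oct} already guarantees the existence of an optimal pair $(\eta_1^*,\eta_2^*)\in U_2$, so the task reduces to \emph{characterizing} it. Applying Pontryagin's principle to the Hamiltonian $H_2$ in \eqref{HP2_ham}, any optimal state trajectory $(f,m)$ admits an adjoint pair $(\lambda_1,\lambda_2)$ solving the adjoint system $\lambda_i'(t)=-\partial H_2/\partial(\mathrm{state}_i)$ displayed above, with transversality $\lambda_1(T)=\lambda_2(T)=0$, and such that for almost every $t\in[0,T]$ the control $(\eta_1^*(t),\eta_2^*(t))$ maximizes $\eta\mapsto H_2$ over the admissible box $[0,1]\times[0,1]$ with the state and adjoint frozen at their optimal values.

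Next I would exploit the structure of $H_2$ as a function of $(\eta_1,\eta_2)$ alone: it equals $-\tfrac12(\eta_1^2+\eta_2^2)$ plus terms affine in $\eta_1$ and $\eta_2$, arising from the appearances $-\eta_1\,f/(f+d_1)$ in $f'$ and $+\eta_2\,m/(m+d_2)$ in $m'$. Hence $H_2$ is strictly concave and separable in the control, and its unconstrained critical point is obtained by setting $\partial H_2/\partial\eta_1=-f\lambda_1/(f+d_1)-\eta_1=0$ and $\partial H_2/\partial\eta_2=m\lambda_2/(m+d_2)-\eta_2=0$, giving $\eta_1=-f\lambda_1/(f+d_1)$ and $\eta_2=m\lambda_2/(m+d_2)$. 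Since each one-dimensional section of $H_2$ is a downward parabola, the maximizer over $[0,1]$ is simply the metric projection of the unconstrained maximizer onto $[0,1]$, namely $x\mapsto\min(1,\max(0,x))$; the three-case sign analysis of $\partial H_2/\partial\eta_1$ at the endpoints, already written out above, is exactly this projection unpacked. This produces precisely \eqref{HP2_1}--\eqref{HP2_2}, and the analogous reasoning for $\eta_2^*$ completes the characterization.

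The only delicate point — the nearest thing to an obstacle — is confirming that the pointwise box-maximization condition is legitimate and yields a genuine optimizer: this rests on the standard theory in \cite{FR75}, together with the facts used throughout this section, namely that $f,m\ge 0$ on the invariant region so that $f/(f+d_1)$ and $m/(m+d_2)$ are well defined, bounded, and continuous in $t$ along trajectories, and that $H_2$ is concave in the control (and $J_2$ concave in the control), so the necessary conditions from the maximum principle are also sufficient to pin down the optimal control uniquely. Collecting these observations proves the theorem, the argument being entirely parallel to that used for Theorem~\ref{thm_tyc_3_oct} and the model~1 characterization.
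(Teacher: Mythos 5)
Your proposal is correct and follows essentially the same route as the paper: apply Pontryagin's maximum principle with the Hamiltonian $H_2$, the stated adjoint system and transversality conditions, and obtain the controls by maximizing the strictly concave, separable-in-control Hamiltonian over the box, which is exactly the paper's three-case sign analysis written as the projection $\min(1,\max(0,\cdot))$ of the unconstrained critical points $-f\lambda_1/(f+d_1)$ and $m\lambda_2/(m+d_2)$. No substantive difference from the paper's argument.
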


\subsection{Model 3: Nonlinear Harvesting when $\boldsymbol{G_1(f)=f^{\frac{3}{2}}, ~G_2(m)=m^{\frac{3}{2}}}$}

Consider the the model
\begin{equation}
\label{HP_3}
\begin{split}
& \frac{df}{dt}=\frac{1}{2}fm\beta L-\delta f-\eta_1 f^{\frac{3}{2}},\\
& \frac{dm}{dt}=\frac{1}{2}fm\beta L-\delta m+\eta_2 m^{\frac{3}{2}}.
\end{split}
\end{equation}
The nonlinear harvesting functions attempt to model a sharp increase in the ability to harvest fish at low populations.  Of course, in contrast to the previous Model 2, for large populations the harvesting terms has no upper bound. Such terms could express chemical use that are density dependent \cite{SL15}. In order to compare  Models 2 and 3 see Fig. \ref{model_3_2} - Fig. \ref{model_4_2}.

%{\color{blue}This section while interesting mathematically as a comparison is not focusing on a realistic harvesting term. I suggest either we remove this section completely OR discuss more as to why this particular harvesting term is a better model to a certain type of harvesting situation.}

\begin{theorem}
\label{thm_global_HP3}
The trivial equilibrium is globally asymptotically stable in \eqref{HP_3} if $\beta K<2\delta-\eta_2 \sqrt{K}$ and $\delta>\eta_2 \sqrt{K}$.
\end{theorem}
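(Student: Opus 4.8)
The plan is to follow the Lyapunov-function template already used for Theorems~\ref{thm_global_3}, \ref{NPH1_global} and \ref{HP2_global}: take $V = f+m$, which is positive definite on the region $\Omega = \{0 \le f,\ 0\le m,\ f+m\le K\}$, and show $\frac{dV}{dt} < 0$ at every point of $\Omega$ other than the origin. Global asymptotic stability of $(0,0)$ then follows by the standard Lyapunov/LaSalle argument, together with positive invariance of $\Omega$.

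First I would compute, along solutions of \eqref{HP_3},
\[
\frac{dV}{dt} \;=\; \frac{df}{dt}+\frac{dm}{dt} \;=\; fm\beta L-\delta(f+m)-\eta_1 f^{3/2}+\eta_2 m^{3/2}.
\]
Two terms are immediately favourable: $L\le 1$ on $\Omega$ gives $fm\beta L\le fm\beta$, and $-\eta_1 f^{3/2}\le 0$. The only term with the ``wrong'' sign is the male-stocking term $\eta_2 m^{3/2}$, and, in contrast to the linear stocking term in Model~1, it is not bounded \emph{a priori} on the full positive quadrant. The key step is therefore to invoke the bound $m\le K$ valid on $\Omega$, which requires $\Omega$ to be positively invariant; this in turn follows from the hypothesis $\delta>\eta_2\sqrt{K}$, since on the face $f+m=K$ one has $L=0$ and $\frac{d}{dt}(f+m)=-\delta K-\eta_1 f^{3/2}+\eta_2 m^{3/2}\le -\delta K+\eta_2 K^{3/2}<0$, while the faces $f=0$ and $m=0$ are trivially invariant. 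Given $m\le K$ we may write $m^{3/2}=m\sqrt{m}\le \sqrt{K}\,m$, so that
\[
\frac{dV}{dt}\;\le\; fm\beta-\delta f-\bigl(\delta-\eta_2\sqrt{K}\bigr)m,
\]
and by hypothesis the coefficient $\delta-\eta_2\sqrt{K}$ is positive.

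From here the computation is the same as in the earlier proofs. For $f,m>0$ I would factor out $fm$ and use $f\le K$, $m\le K$:
\[
\frac{dV}{dt}\;\le\; fm\Bigl[\beta-\tfrac{\delta}{m}-\tfrac{\delta-\eta_2\sqrt{K}}{f}\Bigr]\;\le\; \tfrac{fm}{K}\bigl(\beta K-2\delta+\eta_2\sqrt{K}\bigr),
\]
which is strictly negative precisely when $\beta K<2\delta-\eta_2\sqrt{K}$, i.e.\ under the stated hypothesis. The boundary cases inside $\Omega$ follow from the intermediate bound directly: if $m=0,\ f>0$ then $\frac{dV}{dt}=-\delta f-\eta_1 f^{3/2}<0$, and if $f=0,\ m>0$ then $\frac{dV}{dt}\le-(\delta-\eta_2\sqrt{K})m<0$. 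Hence $\frac{dV}{dt}<0$ on $\Omega\setminus\{(0,0)\}$, giving the claim.

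The main (and essentially only) obstacle is the unbounded harvesting-type nonlinearity $\eta_2 m^{3/2}$: one cannot expect a decay estimate on the whole state space, so the argument must be localised to the invariant region $\Omega$, and it is exactly this that forces the auxiliary hypothesis $\delta>\eta_2\sqrt{K}$ (needed both to make $\Omega$ invariant and to keep the male coefficient positive after the reduction $m^{3/2}\le\sqrt{K}\,m$). Once that reduction is carried out, everything collapses to the same ``factor $fm$ and bound $1/f,\ 1/m$ by $1/K$'' calculation used in Theorems~\ref{thm_global_3} and \ref{NPH1_global}.
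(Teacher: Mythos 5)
Your proof is correct and follows essentially the same route as the paper: the Lyapunov function $V=f+m$, the bound $L\le 1$, the reduction $\eta_2 m^{3/2}\le \eta_2\sqrt{K}\,m$ using $m\le K$ (equivalently the paper's $-(\delta-\eta_2\sqrt{m})m\le -(\delta-\eta_2\sqrt{K})m$), and then factoring out $fm$ and bounding $1/f,1/m$ by $1/K$ to reach $\beta K<2\delta-\eta_2\sqrt{K}$. Your additional verification of the positive invariance of $\Omega$ and of the boundary cases $f=0$ or $m=0$ is a welcome tightening of details the paper leaves implicit, but it does not change the argument.
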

\begin{proof}
Consider the Lyapunov function $V=f+m$. It remains to show that  $\frac{dV}{dt}<0$ for all $(f,m) \not =(0,0)$. Consider 
\beq%\begin{equation}
\begin{split}
\frac{dV}{dt}&=\frac{df}{dt}+\frac{dm}{dt}\\
&=fm\beta L-(\delta+\eta_1 \sqrt{f})f-(\delta-\eta_2 \sqrt{m})m\\
&\leq fm\beta -(\delta+\eta_1 \sqrt{f})f-(\delta-\eta_2 \sqrt{m})m, \quad \mathrm{since} \quad L \leq 1 \\
&\leq fm\beta-\delta f-(\delta-\eta_2 \sqrt{K})m\\
&\leq (\beta-\frac{\delta}{m}-\frac{(\delta-\eta_2 \sqrt{K})}{f}) fm\\
&\leq  (\beta-\frac{\delta}{K}-\frac{(\delta-\eta_2 \sqrt{K})}{K}) fm.\\
\end{split}
\eeq%\end{equation}
It is enough to show $\beta-\frac{\delta}{K}-\frac{(\delta-\eta_2 \sqrt{K})}{K}<0$ and $\delta-\eta_2 \sqrt{K}>0$. By direct calculation, we have $\beta K<2\delta-\eta_2 \sqrt{K}$ and $\delta>\eta_2 \sqrt{K}$, which complete the proof.
\end{proof}

As in the previous section, we assume $\eta_1$ and $\eta_2$ are time dependent and set the objective function
\beq%\begin{equation}
  J_3(\eta_{1},\eta_{2})= \int^{T}_{0} -(f+m)- \frac{1}{2}(\eta_{1}^{2} +\eta_{2}^{2} ). dt
\eeq%\end{equation}
subject to \eqref{HP_3} and with initial conditions $f(t_{0})=f_{0}, m(t_{0})=m_0$. The optimal controls are searched for within the set $U_3$ where
\beq%\begin{equation}
 U_3= \{ (\eta_{1},\eta_{2})~|~\eta_{i} \ \mbox{measurable}, \  0 \leq \eta_{1} \leq 1, 0 \leq \eta_{2} \leq 1, \  t \in [0,T], \ \forall T\}.
\eeq%\end{equation}
The goal is to seek an optimal $(\eta_{1}^{*}, \eta_{2}^{*})$ such that
\beq%\begin{equation}
\begin{split}
  J_3(\eta_{1}^{*}, \eta_{2}^{*}) &= \underset{(\eta_{1},\eta_{2})}{\max} \int^{T}_{0} -(f+m) - \frac{1}{2}(\eta_1^2+\eta_{2}^{2})  dt \\
  & = \underset{(\eta_{1},\eta_{2})}{\min} \int^{T}_{0} f+m + \frac{1}{2}(\eta_1^2+\eta_{2}^{2})  dt.\\
 \end{split}
\eeq%\end{equation}
Again, we prove the following existence theorem.

\begin{theorem}
\label{PH3_thm_oct}
Consider the optimal control problem \eqref{HP_3}. There exists $(\eta_{1}^{*}, \eta_{2}^{*}) \in U_3$ s.t.

\beq%\begin{equation}
  J_3(\eta_{1}^{*}, \eta_{2}^{*}) = \underset{(\eta_{1},\eta_{2})}{\max} \int^{T}_{0} -(f+m) - \frac{1}{2}(\eta_1^2+\eta_{2}^{2})  dt.
\eeq%\end{equation}
\end{theorem}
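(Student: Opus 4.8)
The plan is to follow exactly the route used for the earlier existence results, Theorems~\ref{thm_tyc_3_oct}, \ref{thm_NPH1_oct} and \ref{PH2_thm_oct}: verify the hypotheses of the Fleming--Rishel / Filippov--Cesari existence theory \cite{FR75} for the maximization of $J_3$ over $U_3$. The ingredients to check are (i) the admissible set $U_3$ is non-empty, closed, bounded and convex; (ii) for every admissible control the state trajectory of \eqref{HP_3} exists on all of $[0,T]$ and is uniformly bounded; (iii) the right-hand side of \eqref{HP_3} is dominated by an affine function of the state and control on the relevant region; (iv) the integrand $-(f+m)-\tfrac12(\eta_1^2+\eta_2^2)$ is concave in $(\eta_1,\eta_2)$; and (v) a coercivity estimate of the form (integrand)~$\le c_2 - c_1\,|(\eta_1,\eta_2)|^{r}$ with $c_1>0$ and $r>1$.

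The easy items I would dispose of first. The set $U_3=\{(\eta_1,\eta_2):\eta_i\text{ measurable},\ 0\le\eta_1,\eta_2\le 1\}$ is non-empty (it contains $\eta_1\equiv\eta_2\equiv 0$) and corresponds to the compact convex box $[0,1]^2$, giving (i). For (iv), with the states held fixed, $-(f+m)-\tfrac12(\eta_1^2+\eta_2^2)$ is a concave quadratic in $(\eta_1,\eta_2)$. For (v), since $f,m\ge 0$ we have $-(f+m)-\tfrac12(\eta_1^2+\eta_2^2)\le-\tfrac12(\eta_1^2+\eta_2^2)$, so $c_2=0$, $c_1=\tfrac12$ and exponent $2>1$ work. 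These three verifications are essentially identical to the corresponding steps in the proof of Theorem~\ref{thm_tyc_3_oct}.

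The substantive step, and where the real work lies, is the a priori boundedness of the states in (ii)--(iii). Here I would argue that, on restricting the initial data to the feasible region $\Omega=\{0\le f,\ 0\le m,\ f+m\le K\}$ and keeping the harvesting/stocking magnitudes in the safe regime flagged in the Remark following the $\mu=0$ stability analysis (cf. the finite-time blow-up phenomenon of \cite{PBT19}), the solutions of \eqref{HP_3} remain confined to a compact set. Concretely, the faces $f=0$ and $m=0$ are invariant, and on the face $f+m=K$ the logistic factor $L$ vanishes so that $f'=-\delta f-\eta_1 f^{3/2}\le 0$, while $(f+m)'=-\delta(f+m)-\eta_1 f^{3/2}+\eta_2 m^{3/2}\le 0$ once $\eta_2\sqrt{m}\le\delta$ — i.e.\ in the same parameter window $\delta>\eta_2\sqrt{K}$ used in Theorem~\ref{thm_global_HP3}. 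With $0\le f,m\le K$ secured, the superlinear stocking term is tamed via $m^{3/2}\le\sqrt{K}\,m$ (and $f^{3/2}\le\sqrt{K}\,f$), which simultaneously bounds the trajectory and yields the affine domination required in (iii). The main obstacle is precisely this point: unlike Model~1 and Model~2, the term $\eta_2 m^{3/2}$ is superlinear, so the boundedness of the state is not automatic from $f+m\le K$ and must be obtained from invariance of $\Omega$ together with the standing parameter restriction.

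Once (i)--(v) are in place, the Fleming--Rishel existence theorem \cite{FR75} produces an admissible pair $(\eta_1^*,\eta_2^*)\in U_3$ at which $J_3$ attains its maximum, which is the assertion of the theorem. Alternatively — as was done for Theorem~\ref{PH2_thm_oct} — one may simply record that, once the states are known to be bounded on $\Omega$, the argument is structurally identical to that of Theorem~\ref{thm_tyc_3_oct} and omit the details for brevity.
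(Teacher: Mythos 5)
Your proposal takes essentially the same route as the paper: the paper's own proof of Theorem \ref{PH3_thm_oct} simply states that the argument is similar to Theorem \ref{thm_tyc_3_oct} (Fleming--Rishel existence via boundedness of states and controls together with concavity of the integrand in $(\eta_1,\eta_2)$, citing \cite{FR75}) and omits the details. If anything, your write-up is more complete, since you explicitly supply the a priori state-boundedness step that the paper leaves implicit, handling the superlinear term $\eta_2 m^{3/2}$ via invariance of the region $\{f,m\ge 0,\ f+m\le K\}$ under the standing parameter restriction consistent with Theorem \ref{thm_global_HP3} and the paper's remark on restricting data to avoid blow-up.
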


\begin{proof}
The proof is similar to Theorem \ref{thm_tyc_3_oct} and is omitted for brevity.
\end{proof}

The Hamiltonian for $J_3$ is
\beq%\begin{equation}
\label{HP3_ham}
H_3=-(f+m) - \frac{1}{2}(\eta_1^2+\eta_{2}^{2}) +\lambda_1 f'+\lambda_2 m'.
\eeq%\end{equation}
The necessary conditions on the optimal controls are determined by applying Pontryagin's maximum principle, that is,
\beq%\begin{equation}
\begin{split}
\lambda_1'(t) =& 1-\lambda_1 [\frac{m \beta }{2} (1-\frac{f+m}{K})-\frac{f m \beta}{2K}-\delta-\frac{3}{2} \eta_1 f^{\frac{1}{2}}] \\
& -\lambda_2 [\frac{ m \beta}{2} (1-\frac{f+m}{K})-\frac{ f m \beta}{2K}],\\
\lambda_2'(t)= &1-\lambda_1 [\frac{f \beta }{2} (1-\frac{f+m}{K})-\frac{f m \beta}{2K}] \\
&-\lambda_2 [\frac{ f \beta}{2} (1-\frac{f+m}{K})-\frac{ f m \beta}{2K}-\delta+\frac{3}{2} \eta_2 m^{\frac{1}{2}}],
\end{split}
\eeq%\end{equation}
with the transversality condition of $\lambda_1(T)=\lambda_2(T)=0$. Differentiating the Hamiltonian with respect to the control variables yields,
\beq%\begin{equation}
\begin{split}
& \frac{\partial H_3}{\partial \eta_1}=-\lambda_1 f^{\frac{3}{2}}-\eta_1,\\
& \frac{\partial H_3}{\partial \eta_2}=\lambda_2 m^{\frac{3}{2}}-\eta_2.
\end{split}
\eeq%\end{equation}
Hence, the optimal control for $\eta_1^*$ and $\eta_2^*$ is written compactly as
%\begin{equation}
%\begin{split}
%\frac{\partial H_3}{\partial \eta_{1}}<0 & \Rightarrow \eta_{1}^{*}=0    \qquad \qquad %\Rightarrow 0>-\lambda_1 f^{\frac{3}{2}} \\
%\frac{\partial H_3}{\partial \eta_{1}}=0 & \Rightarrow \eta_{1}^{*}=-\lambda_1 %f^{\frac{3}{2}}  \quad \Rightarrow 0 \leq -\lambda_1 f^{\frac{3}{2}} \leq 1\\
%\frac{\partial H_3}{\partial \eta_{1}}>0 & \Rightarrow \eta_{1}^{*}=1 \quad \quad %\qquad \Rightarrow 1<- \lambda_1 f^{\frac{3}{2}}
%\end{split}
%\end{equation}
%Similarly for optimal $\eta_2^*$. A compact way of writing the optimal control $\eta_1, \eta_2$ is 
%
\begin{eqnarray}
\label{HP3_1}
\eta_1^{*}&=&\min(1, \max(0, -\lambda_1 f^{\frac{3}{2}})),\\
\label{HP3_2}
\eta_2^{*}&=&\min(1, \max(0, ~\lambda_2 m^{\frac{3}{2}})).
\end{eqnarray}
\begin{theorem}
An optimal control $(\eta_1^*, \eta_2^*) \in U_3$ for the system \eqref{HP_3} that maximizes the objective functional $J_3$ is characterized by \eqref{HP3_1}-\eqref{HP3_2}.
\end{theorem}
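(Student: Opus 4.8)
The plan is to invoke Pontryagin's maximum principle together with the existence result already in hand. First I would recall that Theorem~\ref{PH3_thm_oct} furnishes an optimal pair $(\eta_1^*,\eta_2^*)\in U_3$ with an associated optimal state trajectory $(f,m)$; by the a priori positivity and global boundedness of solutions of \eqref{HP_3} this trajectory stays in $\{0\le f,\ 0\le m,\ 0\le f+m\le K\}$, so the terms $f^{3/2}$ and $m^{3/2}$ are well defined and bounded, and the right-hand sides of \eqref{HP_3} together with their state-derivatives are continuous there. Consequently the costate system displayed above, with transversality $\lambda_1(T)=\lambda_2(T)=0$, admits a solution $(\lambda_1,\lambda_2)$ on $[0,T]$, and the Hamiltonian $H_3$ of \eqref{HP3_ham} is $C^1$ in every argument.

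Next I would apply the maximum principle: for a.e.\ $t\in[0,T]$ the optimal control maximizes the map $(\eta_1,\eta_2)\mapsto H_3$ over the admissible box $[0,1]^2$. Because $H_3$ is separately strictly concave in $\eta_1$ and in $\eta_2$ — the coefficient of each $\eta_i^2$ being $-\tfrac12$ — this pointwise maximizer is unique and equals the projection onto $[0,1]$ of the unconstrained stationary point. The stationarity equations are precisely the ones already computed, $\partial H_3/\partial\eta_1=-\lambda_1 f^{3/2}-\eta_1=0$ and $\partial H_3/\partial\eta_2=\lambda_2 m^{3/2}-\eta_2=0$.

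I would then run the standard three-case clipping argument, exactly as in the derivation of \eqref{HP2_1}--\eqref{HP2_2}: if $-\lambda_1 f^{3/2}<0$, then $\eta_1\mapsto H_3$ is decreasing on $[0,1]$ and $\eta_1^*=0$; if $-\lambda_1 f^{3/2}\in[0,1]$, the interior stationary point is admissible and $\eta_1^*=-\lambda_1 f^{3/2}$; if $-\lambda_1 f^{3/2}>1$, the map is increasing on $[0,1]$ and $\eta_1^*=1$. These collapse into $\eta_1^*=\min(1,\max(0,-\lambda_1 f^{3/2}))$, which is \eqref{HP3_1}; the identical computation with $\lambda_2 m^{3/2}$ in place of $-\lambda_1 f^{3/2}$ gives \eqref{HP3_2}, completing the characterization.

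Since this is a routine application, the only mildly delicate point — the ``hard part'' in spirit — is checking that the hypotheses of the maximum principle (as in \cite{FR75}) genuinely apply despite the non-Lipschitz-looking state nonlinearity $g\mapsto g^{3/2}$; this is settled by the earlier-established positivity and boundedness of $(f,m)$, on which $g\mapsto g^{3/2}$ is $C^1$, so all coefficients entering $H_3$ and the costate equations are continuous and the standard theory applies verbatim.
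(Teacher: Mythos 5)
Your proposal is correct and follows essentially the same route as the paper: Pontryagin's maximum principle applied to the Hamiltonian $H_3$, the adjoint system with transversality $\lambda_1(T)=\lambda_2(T)=0$, and the three-case clipping of the stationary points $-\lambda_1 f^{3/2}$ and $\lambda_2 m^{3/2}$ (note you correctly kept the opposite signs coming from harvesting $f$ versus stocking $m$), yielding \eqref{HP3_1}--\eqref{HP3_2}. The only addition beyond the paper's derivation is your explicit check that $g\mapsto g^{3/2}$ is $C^1$ on the bounded, nonnegative trajectories of \eqref{HP_3}, a hypothesis the paper leaves implicit.
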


\subsection{Numerical Simulations and Comparisons}
In this section, we will numerically simulate the optimal strategy and its corresponding optimal states for each model. Table \eqref{table_2} details the parameters used in the simulations.
\begin{table}[H]
\caption{Parameters used for numerical simulations}
\label{table_2}
\begin{tabular}{|c|l|c|}
\hline
\textbf{Parameter }            & ~~\textbf{Description}       & \textbf{Value} \\ \hline
$ \beta $  & Birth rate        & 0.005774 \\ \hline
$ \delta $ & Death rate        & 0.0648  \\ \hline
$K$                     & Carrying capacity & 405.0705   \\ \hline
$T$                     & Terminal time     & 200   \\ \hline
$d_1$                  & parameter in $G_1$     & 1  \\ \hline
$d_2$                 & parameter in $G_2$     & 1  \\ \hline
\end{tabular}
\end{table}

\begin{figure}[H]
%\label{fig_opt_34}
        \begin{minipage}[b]{0.480\linewidth}
            \centering
            \includegraphics[width=\textwidth]{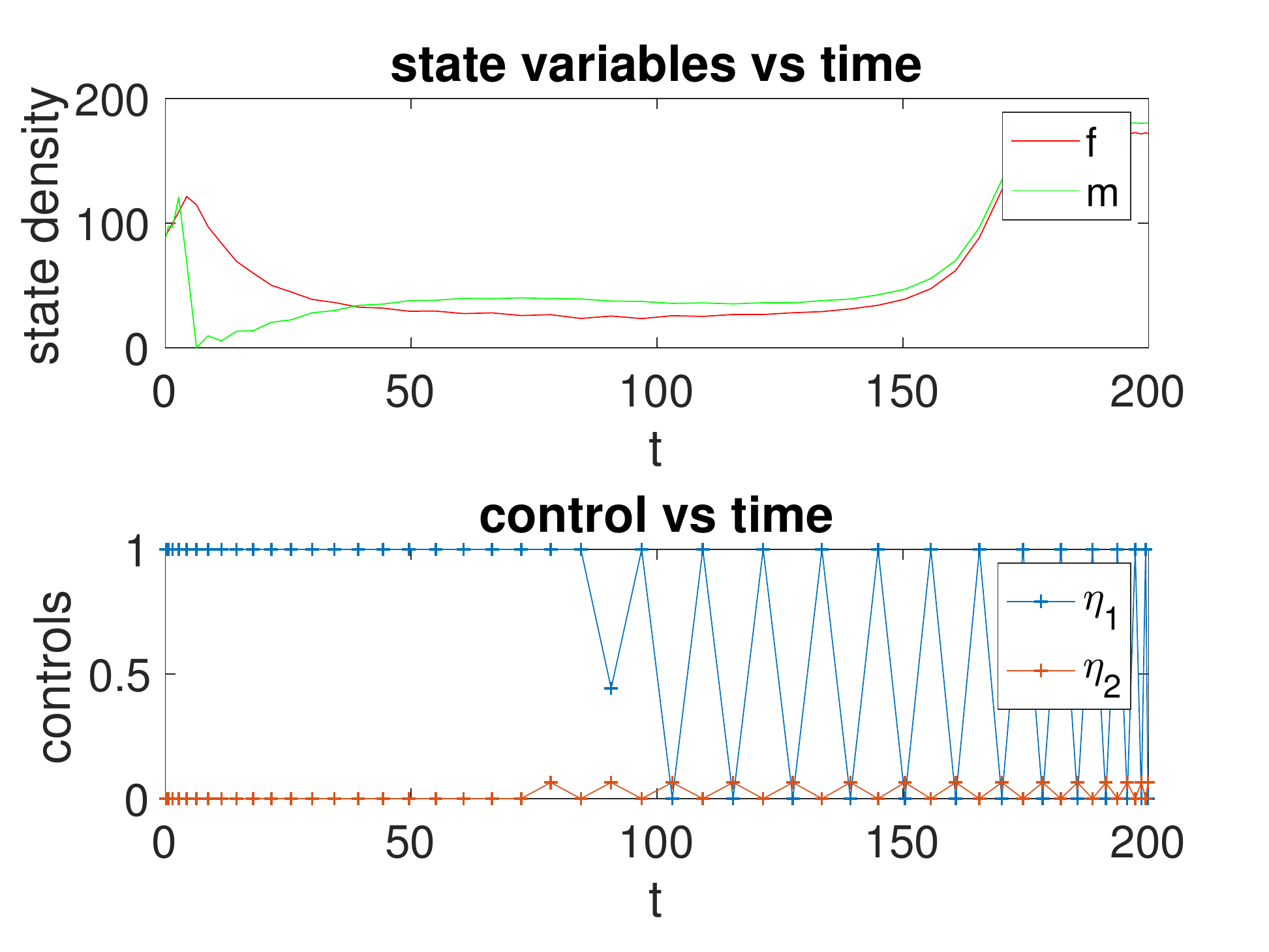}
            \subcaption{Model 2}
             \label{model_2_1}
        \end{minipage}
        \hspace{0.10cm}
        \begin{minipage}[b]{0.480\linewidth}
            \centering
            \includegraphics[width=\textwidth]{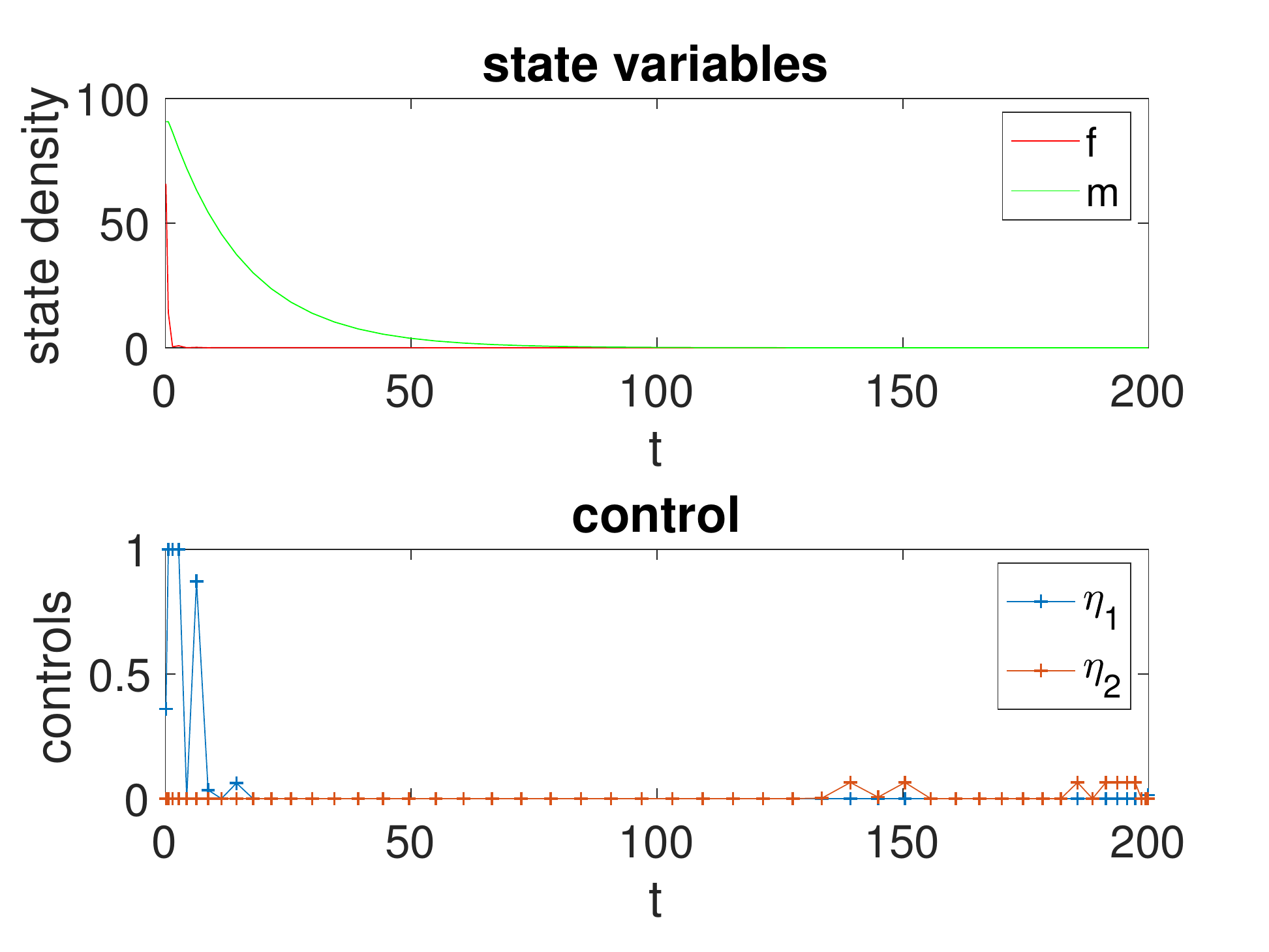}
            \subcaption{Model 3}
             \label{model_3_1}
        \end{minipage}
       % \label{fig_opt_34}
\caption{Female (red) and male (green) densities change with time $t$ and the optimal controls $\eta_1^*$ (blue) and $\eta_2^*$ (red) for (A) Model 2 and (B) Model 3.  }
\end{figure}

\begin{figure}[H]
\label{model34cost}
        \begin{minipage}[b]{0.480\linewidth}
            \centering
            \includegraphics[width=\textwidth]{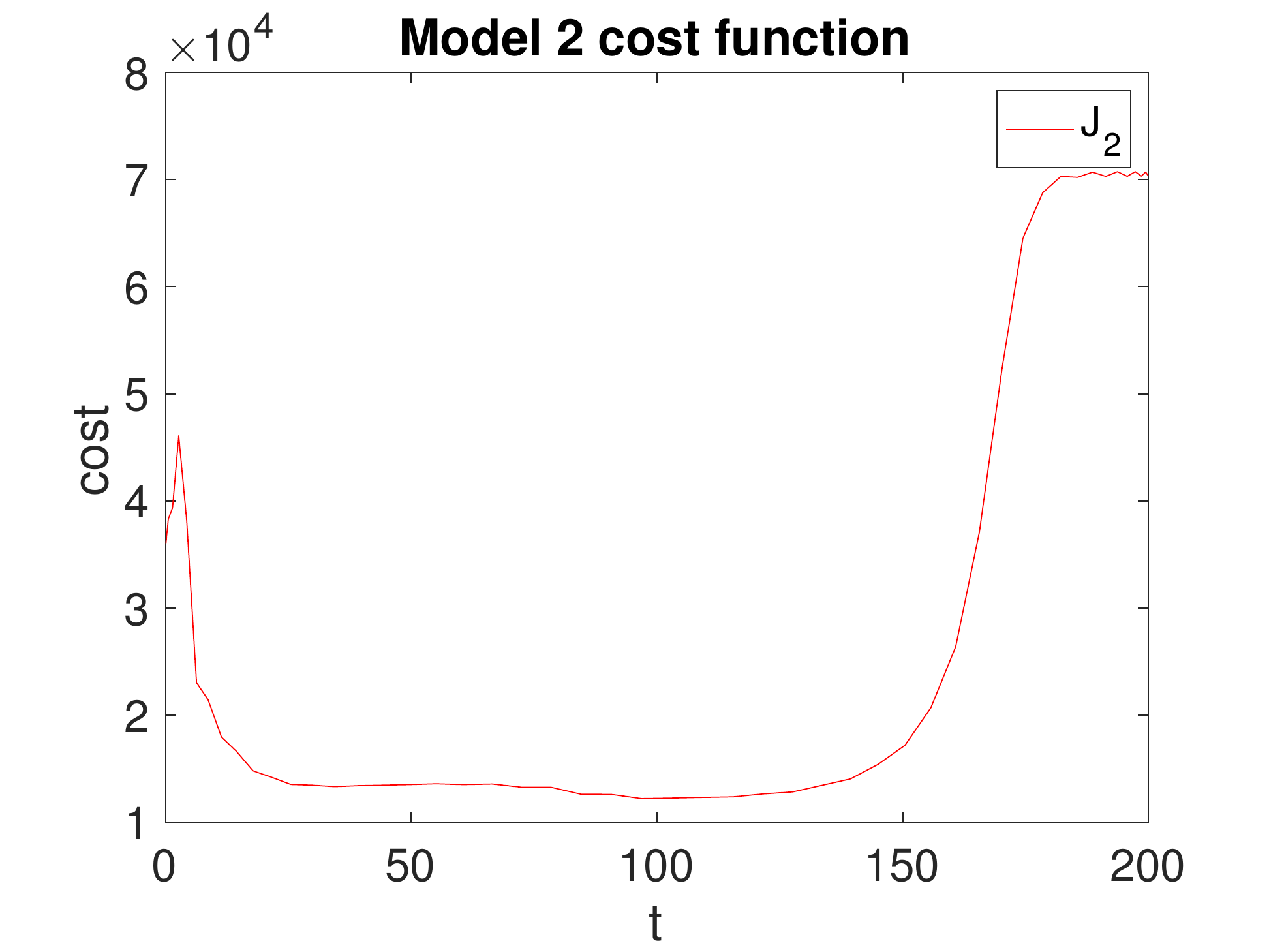}
            \subcaption{ Model 2}
             \label{model_3_2}
        \end{minipage}
        \hspace{0.10cm}
        \begin{minipage}[b]{0.480\linewidth}
            \centering
            \includegraphics[width=\textwidth]{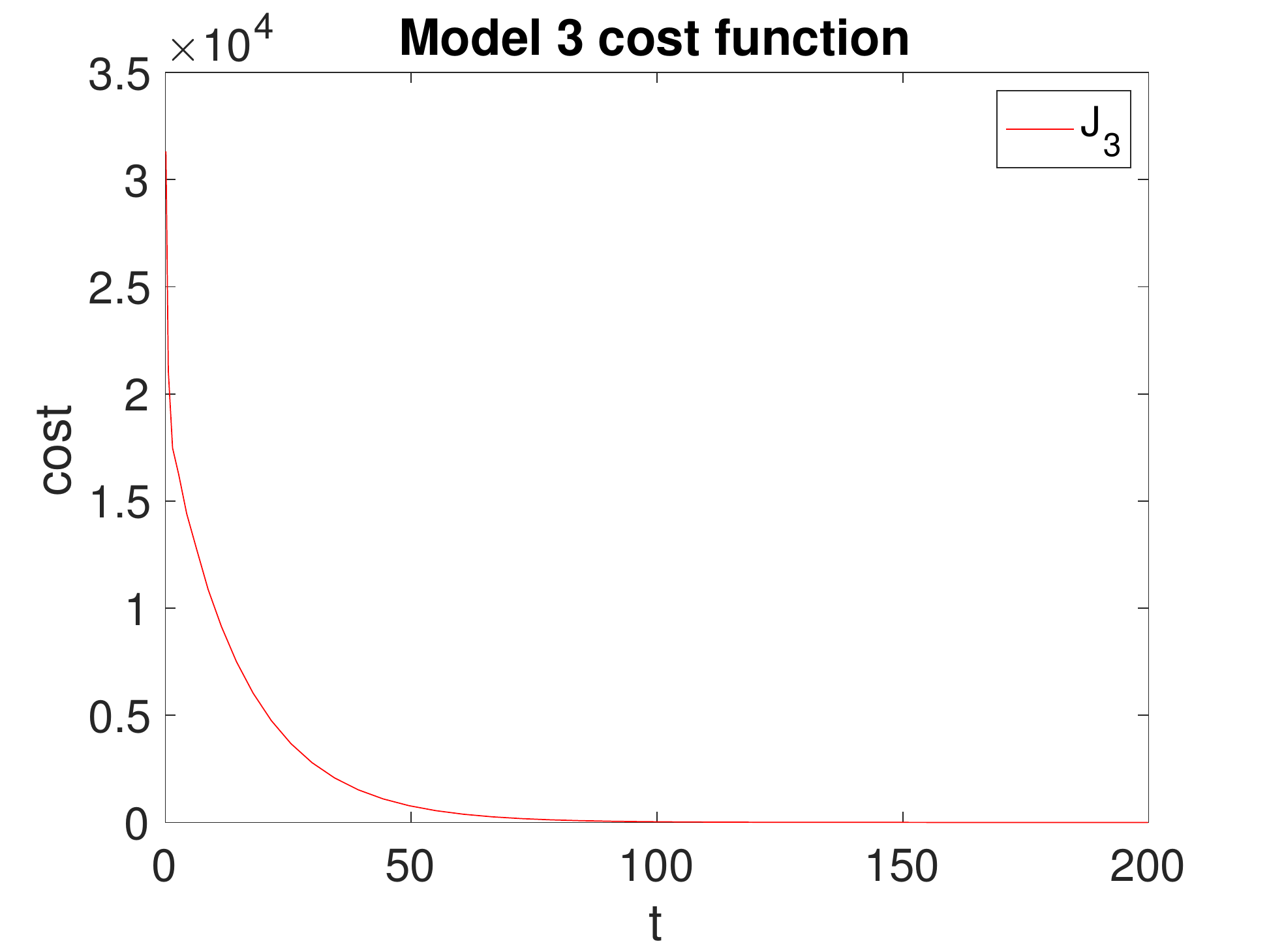}
            \subcaption{ Model 3}
             \label{model_4_2}
        \end{minipage}
        %\label{model34cost}
\caption{Objective function $J_2$ and $J_3$ change with time $t$  with optimal controls $\eta_1^*$ and $\eta_2^*$ for (A) Model 2 and (B) Model 3.}
\end{figure}

%Fig.~\ref{model_2_1} implies that extinction cannot occur if harvesting obeys the form found in Model 2.  However, females and males can be driven to extinction with our optimal controls for Model 3, depicted in Fig.~\ref{model_3_1}. Therefore, modeling the harvesting contribution with a more realistic term as in Model 2 indicates unfavorable results.  However, if harvesting is best modeling by a power function then favorable results are found.  In fact, the total cost of 3835 is lower for Model 3 than that of Model 1.  In other words, the total cost of harvesting may be lower than previously thought.  Of course, this relies on the best model to harvesting.%we can conclude Model 3 works better than Model 1. 
%%total cost for model 3 is 28191
%
%{\color{blue}We must be careful here.  We are trying to \textbf{model} harvesting.  We are not stating how to harvest the fish in any particular manner. What this section shows is that improving your model of harvesting by a realistic rational function which provides a maximal harvesting for large populations is not only more realistic BUT is also favorable.  That is, when you include a more realistic harvesting term the cost of harvesting is predicted to be lower than that in other models.  I have reworded this to reflect this.}

\section{Appendix C: Nonlinear forms of Harvesting for FHMH Models}
\label{AppendixC}

\subsection{Model 5: $\boldsymbol{G_1(f)=\frac{f}{f+d_1},G_2(m)=\frac{m}{m+d_2}}$}
The model is given by
\begin{equation}
\label{NHP_2}
\begin{split}
& \frac{df}{dt}=\frac{1}{2}fm\beta L-\delta f-\eta_1 \frac{f}{f+d_1},\\
& \frac{dm}{dt}=\frac{1}{2}fm\beta L-\delta m-\eta_2 \frac{m}{m+d_2},
\end{split}
\end{equation}
where $d_1>0, d_2>0$. Again, the harvesting functions attempt to model a saturation of harvesting for large populations.

\subsubsection{Global stability}
The following theorem provides the criteria for global stability of the trivial equilibrium. 
\begin{theorem}
\label{NHP2_global}
The trivial equilibrium is globally asymptotically stable in \eqref{NHP_2} if $\beta K<2\delta+\frac{\eta_1}{K+d_1}+\frac{\eta_2}{K+d_2}$.
\end{theorem}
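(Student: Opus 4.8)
The plan is to reuse, essentially verbatim, the Lyapunov argument from Theorems \ref{NPH1_global} and \ref{HP2_global}, adapted to the two saturating harvesting terms. Take $V = f + m$ as the candidate Lyapunov function on the forward-invariant region where $0 \le f,\ 0 \le m$ and $f + m \le K$. The goal is to show $\frac{dV}{dt} < 0$ for every $(f,m) \ne (0,0)$; combined with $V \ge 0$ and $V(0,0) = 0$ this yields global asymptotic stability of the origin.

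First I would add the two equations of \eqref{NHP_2} to get
\beq
\frac{dV}{dt} = fm\beta L - \left(\delta + \frac{\eta_1}{f+d_1}\right) f - \left(\delta + \frac{\eta_2}{m+d_2}\right) m .
\eeq
Using $L \le 1$ and then the estimates $\frac{1}{f+d_1} \ge \frac{1}{K+d_1}$ and $\frac{1}{m+d_2} \ge \frac{1}{K+d_2}$, which hold because $f \le K$ and $m \le K$ on the region, I would bound
\beq
\frac{dV}{dt} \le fm\beta - \left(\delta + \frac{\eta_1}{K+d_1}\right) f - \left(\delta + \frac{\eta_2}{K+d_2}\right) m .
\eeq
For $f, m > 0$ I then factor out $fm$ and apply $\frac{1}{f}, \frac{1}{m} \ge \frac{1}{K}$ to obtain
\beq
\frac{dV}{dt} \le \left[\beta - \frac{1}{K}\left(\delta + \frac{\eta_1}{K+d_1}\right) - \frac{1}{K}\left(\delta + \frac{\eta_2}{K+d_2}\right)\right] fm ,
\eeq
so it suffices that the bracket be negative, i.e. $\beta K < 2\delta + \frac{\eta_1}{K+d_1} + \frac{\eta_2}{K+d_2}$, which is exactly the hypothesis. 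The boundary cases are immediate: if $m = 0$ and $f > 0$ then $\frac{dV}{dt} = -\delta f - \frac{\eta_1 f}{f+d_1} < 0$, and symmetrically if $f = 0$, so strict decrease holds on all of the region minus the origin.

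Since the structure mirrors the earlier proofs, there is no genuine obstacle here; the only point requiring care is the direction of the bound on the rational harvesting terms. Because the harvesting enters as $-\frac{\eta_i(\cdot)}{(\cdot)+d_i}$, what is needed is a \emph{lower} bound on $\frac{\eta_i}{(\cdot)+d_i}$, and this is precisely where the a priori upper bound $f, m \le K$ on the invariant region must be used — evaluating the denominators at the \emph{largest} admissible value $K$, not at $d_i$, is what produces the stated sufficient condition rather than a weaker one. I would also briefly note that the region $\{0 \le f,\ 0 \le m,\ f+m \le K\}$ is forward invariant for \eqref{NHP_2} (as in the treatment of the FHMS models), so that these estimates are legitimate along every trajectory.
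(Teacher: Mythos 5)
Your proposal is correct and follows essentially the same route as the paper: the Lyapunov function $V=f+m$, the bound $L\le 1$, the estimates $\frac{\eta_i}{(\cdot)+d_i}\ge\frac{\eta_i}{K+d_i}$ on the bounded region, and factoring out $fm$ with $\frac{1}{f},\frac{1}{m}\ge\frac{1}{K}$ to reduce everything to the sign of $\beta-\frac{1}{K}\bigl(2\delta+\frac{\eta_1}{K+d_1}+\frac{\eta_2}{K+d_2}\bigr)$. Your explicit treatment of the boundary cases $f=0$ or $m=0$ and the remark on forward invariance are small additions the paper leaves implicit, but the argument is the same.
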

\begin{proof}
Consider the Lyapunov function $V=f+m$. It is left to show that $\frac{dV}{dt}<0$ for all $(f,m) \not =(0,0)$. Consider 
\beq%\begin{equation}
\begin{split}
\frac{dV}{dt}&=\frac{df}{dt}+\frac{dm}{dt}\\
&=fm\beta L-(\delta+\frac{\eta_1}{f+d_1})f-(\delta+\frac{\eta_2}{m+d_2})m\\
&\leq fm\beta-(\delta+\frac{\eta_1}{f+d_1})f-(\delta+\frac{\eta_2}{m+d_2})m, \quad \mathrm{since} \quad L \leq 1 \\
&= [\beta-\frac{1}{m}(\delta+\frac{\eta_1}{f+d_1})-\frac{1}{f}(\delta+\frac{\eta_2}{m+d_2})] fm\\
&\leq [\beta-\frac{1}{m}(\delta+\frac{\eta_1}{K+d_1})-\frac{1}{f}(\delta+\frac{\eta_2}{K+d_2})] fm\\
&\leq [\beta-\frac{1}{K}(\delta+\frac{\eta_1}{K+d_1})-\frac{1}{K}(\delta+\frac{\eta_2}{K+d_2})] fm.\\
\end{split}
\eeq%\end{equation}
It is enough to show $\beta-\frac{1}{K}(\delta+\frac{\eta_1}{K+d_1})-\frac{1}{K}(\delta+\frac{\eta_2}{K+d_2})<0$. By direct calculation, we have $\beta K<2\delta+\frac{\eta_1}{K+d_1}+\frac{\eta_2}{K+d_2}$, which proves theorem \eqref{NHP2_global}.
\end{proof}

\subsubsection{Optimal control}
We again assume $\eta_1, \eta_2$ are time dependent and set the objective function as 
\beq%\begin{equation}
  J_5(\eta_{1},\eta_{2})= \int^{T}_{0} -(f+m)- \frac{1}{2}(\eta_{1}^{2} +\eta_{2}^{2} ) dt.
\eeq%end{equation}
subject to \eqref{NHP_2} and with initial conditions $f(t_{0})=f_{0}, m(t_{0})=m_0$. Optimal controls are sought within $U_5$, that is,
\beq%\begin{equation}
 U_5= \{ (\eta_{1},~\eta_{2})~|~\eta_{i} \ \mbox{measurable}, \  0 \leq \eta_{1} \leq 1, 0 \leq \eta_{2} \leq 1, \  t \in [0,T], \ \forall T\}.
\eeq%\end{equation}
Hence, optimal $(\eta_{1}^{*}, \eta_{2}^{*})$ are sought such that,
\beq%\begin{equation}
\begin{split}
  J_5(\eta_{1}^{*}, \eta_{2}^{*}) &= \underset{(\eta_{1},\eta_{2})}{\max} \int^{T}_{0} -(f+m) - \frac{1}{2}(\eta_1^2+\eta_{2}^{2})  dt \\
  & = \underset{(\eta_{1},\eta_{2})}{\min} \int^{T}_{0} f+m + \frac{1}{2}(\eta_1^2+\eta_{2}^{2})  dt.\\
 \end{split}
\eeq%\end{equation}
The following existence theorem is given.

\begin{theorem}
\label{NPH2_thm_oct}
Consider the optimal control problem \eqref{HP_2}. There exists $(\eta_{1}^{*}, \eta_{2}^{*}) \in U_5$ such that
\beq%\begin{equation}
  J_5(\eta_{1}^{*}, \eta_{2}^{*}) = \underset{(\eta_{1},\eta_{2})}{\max} \int^{T}_{0} -(f+m) - \frac{1}{2}(\eta_1^2+\eta_{2}^{2})  dt.
\eeq%\end{equation}
\end{theorem}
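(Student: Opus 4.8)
The plan is to follow the same template used for Theorems \ref{thm_tyc_3_oct} and \ref{thm_NPH1_oct}: verify the hypotheses of the standard existence theorem for optimal controls (see \cite{FR75}) applied to the system \eqref{NHP_2}. Concretely, I would check four ingredients: (i) the admissible set $U_5$ is nonempty (e.g. $\eta_1\equiv\eta_2\equiv 0\in U_5$) and the state system has a solution for each such control; (ii) the state variables are uniformly bounded on $[0,T]$, so the reachable set is compact; (iii) the right-hand side of \eqref{NHP_2} is Lipschitz in the states and depends affinely (hence convexly) on $(\eta_1,\eta_2)$; and (iv) the integrand $-(f+m)-\tfrac12(\eta_1^2+\eta_2^2)$ is concave in $(\eta_1,\eta_2)$ and is bounded above by a suitable function of $|(\eta_1,\eta_2)|$ — here it is bounded above by $0$, which is more than enough.

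For step (ii), the key observation is that on the biologically relevant region $\{0\le f,\ 0\le m,\ 0\le f+m\le K\}$ the logistic factor $L=1-(f+m)/K\ge 0$ keeps the growth terms nonnegative and bounded, while the harvesting terms $\eta_1 f/(f+d_1)$ and $\eta_2 m/(m+d_2)$ are nonnegative; combined with $0\le\eta_i\le 1$ this shows that $f+m$ cannot exceed $K$ once it starts there, exactly as for Model 4 and Model 2. Hence the state trajectories remain in a fixed compact box independent of the control, which gives the required closedness and boundedness of the functional $J_5$ over $U_5$. For step (iii), since $\eta_1,\eta_2$ enter \eqref{NHP_2} only through the terms $-\eta_1\tfrac{f}{f+d_1}$ and $-\eta_2\tfrac{m}{m+d_2}$, the vector field is affine in the controls, so the convexity requirement on the velocity set is automatic; and the rational functions $\tfrac{f}{f+d_1},\tfrac{m}{m+d_2}$ are smooth and bounded on the state box because $d_1,d_2>0$ keep the denominators away from zero, so existence, uniqueness and the needed Lipschitz estimates for the state solution are unaffected. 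For step (iv), $-\tfrac12(\eta_1^2+\eta_2^2)$ is strictly concave in the controls and $-(f+m)$ does not involve the controls, so the integrand is concave in $(\eta_1,\eta_2)$; this is where \cite{FR75} is invoked to conclude the existence of a maximizing pair $(\eta_1^*,\eta_2^*)\in U_5$.

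I do not expect a genuine obstacle here: every hypothesis of the Fleming--Rishel theorem is verified essentially by inspection once the a priori bound $f+m\le K$ is in hand, and that bound is already available from the analysis of the companion models earlier in the paper. The only point warranting a line of care is confirming the boundedness and smoothness of the saturating harvesting terms on the state box — which is immediate — so the proof can simply mirror that of Theorem \ref{thm_NPH1_oct}, with the sole modification that the controls couple to the states through bounded nonlinear rather than linear functions, a change that affects none of the required estimates.
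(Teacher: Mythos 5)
Your proposal is correct and follows essentially the same route as the paper: the paper's proof simply states that the argument mirrors Theorem \ref{thm_NPH1_oct}, i.e., compactness from the a priori bound $f+m\le K$ together with the bounded controls $0\le\eta_1,\eta_2\le 1$, concavity of the integrand in $(\eta_1,\eta_2)$, and an appeal to \cite{FR75}. Your additional observations (affine dependence on the controls and the boundedness/smoothness of the saturating terms since $d_1,d_2>0$) merely spell out details the paper leaves implicit.
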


\begin{proof}
The proof is similar to Theorem \ref{thm_NPH1_oct} and is omitted for brevity.
\end{proof}

Consider the Hamiltonian for $J_5$,
\begin{equation}
\label{NHP2_ham}
H_5=-(f+m) - \frac{1}{2}(\eta_1^2+\eta_{2}^{2}) +\lambda_1 f'+\lambda_2 m'.
\end{equation}
Pontryagin's maximum principle is applied to determine necessary conditions on the optimal controls.  The differential equations for the adjoint $\lambda_i$ is
\beq%\begin{equation}
\begin{split}
\lambda_1'(t) =& 1-\lambda_1 [\frac{m \beta }{2} (1-\frac{f+m}{K})-\frac{f m \beta}{2K}-\delta-\frac{\eta_1}{f+d_1}+\frac{\eta_1 f}{(f+d_1)^2}] \\
& -\lambda_2 [\frac{ m \beta}{2} (1-\frac{f+m}{K})-\frac{ f m \beta}{2K}].\\
\lambda_2'(t)= &1-\lambda_1 [\frac{f \beta }{2} (1-\frac{f+m}{K})-\frac{f m \beta}{2K}] \\
&-\lambda_2 [\frac{ f \beta}{2} (1-\frac{f+m}{K})-\frac{ f m \beta}{2K}-\delta-\frac{\eta_2}{m+d_2}+\frac{\eta_2 m}{(m+d_2)^2}].
\end{split}
\eeq%\end{equation}
with the transversality condition $\lambda_1(T)=\lambda_2(T)=0.$ Differentiating the Hamiltonian,
\beq%\begin{equation}
\begin{split}
& \frac{\partial H_5}{\partial \eta_1}=-\frac{f\lambda_1}{f+d_1}-\eta_1,\\
& \frac{\partial H_5}{\partial \eta_2}=\frac{m\lambda_2}{m+d_2}-\eta_2.
\end{split}
\eeq%\end{equation}
%
%So the optimal $\eta_1^*,$ are given by
%\begin{equation}
%\begin{split}
%\frac{\partial H_5}{\partial \eta_{1}}<0 & \Rightarrow \eta_{1}^{*}=0  \quad \quad %\qquad \Rightarrow 0>- \frac{f\lambda_1}{f+d_1}  \\
%\frac{\partial H_5}{\partial \eta_{1}}=0 & \Rightarrow \eta_{1}^{*}=-\frac{f\lambda_1}%{f+d_1} \quad \Rightarrow 0 \leq - \frac{f\lambda_1}{f+d_1} \leq 1\\
%\frac{\partial H_5}{\partial \eta_{1}}>0 & \Rightarrow \eta_{1}^{*}=1 \quad \quad %\qquad \Rightarrow 1<- \frac{f\lambda_1}{f+d_1}
%\end{split}
%\end{equation}
Hence, the optimal control $\eta_1, \eta_2$ is written compactly as,
\begin{eqnarray}
\label{NHP2_11}
\eta_1^{*}&=&\min\left(1, \max\left(0, -\frac{f\lambda_1}{f+d_1}\right)\right),\\
\label{NHP2_22}
\eta_2^{*}&=&\min\left(1, \max\left(0, -\frac{ m \lambda_2}{m+d_2}\right)\right).
\end{eqnarray}

\begin{theorem}
An optimal control $(\eta_1^*, \eta_2^*) \in U_5$ for the system \eqref{NHP_2} that maximizes the objective functional $J_2$ is characterized by \eqref{NHP2_11}-\eqref{NHP2_22}.
\end{theorem}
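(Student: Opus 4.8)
The plan is to obtain the characterization as the standard consequence of Pontryagin's maximum principle, in complete parallel with the treatment of Model~1 (the theorem characterizing \eqref{NPH1_opt_1}--\eqref{NPH1_opt_2}) and Model~2. Since Theorem~\ref{NPH2_thm_oct} already supplies an optimal pair $(\eta_1^*,\eta_2^*)\in U_5$, it suffices to show that any maximizer must take the stated form; I would do this by writing down the first-order necessary conditions and reading off the control characterization.

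First I would form the Hamiltonian $H_5$ of \eqref{NHP2_ham} and derive the adjoint system from $\lambda_1'=-\partial H_5/\partial f$ and $\lambda_2'=-\partial H_5/\partial m$, differentiating the right-hand sides of \eqref{NHP_2}. The computation is identical to that for the FHMS Model~2 except that in $\lambda_2'$ the saturating term now contributes with the same sign convention as in $\lambda_1'$, reflecting that males are harvested (via $-\eta_2\frac{m}{m+d_2}$) rather than stocked. The free terminal state and the absence of a terminal cost give the transversality conditions $\lambda_1(T)=\lambda_2(T)=0$.

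Next I would compute the switching functions $\partial H_5/\partial\eta_1=-\frac{f\lambda_1}{f+d_1}-\eta_1$ and $\partial H_5/\partial\eta_2=-\frac{m\lambda_2}{m+d_2}-\eta_2$; both controls enter $H_5$ through a strictly concave quadratic (leading coefficient $-\tfrac12$), so for each fixed state and adjoint the unique maximizer over the admissible interval $[0,1]$ is the projection onto $[0,1]$ of the unconstrained critical point. A routine three-case argument (switching function negative throughout $[0,1]\Rightarrow\eta_i^*=0$; vanishing in the interior $\Rightarrow\eta_i^*$ equals the critical value; positive throughout $\Rightarrow\eta_i^*=1$) then yields $\eta_1^*=\min(1,\max(0,-\frac{f\lambda_1}{f+d_1}))$ and $\eta_2^*=\min(1,\max(0,-\frac{m\lambda_2}{m+d_2}))$, that is, \eqref{NHP2_11}--\eqref{NHP2_22}.

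There is no genuine obstacle here: the result is the expected Pontryagin characterization and the only point to watch is the sign of the $\eta_2$ switching function, which differs from Model~2 precisely because the role of the second control has changed from addition to removal. Accordingly one may either present the argument in full as above, or --- as elsewhere in the paper --- note that it is analogous to the proof of the Model~1 characterization and omit the details for brevity.
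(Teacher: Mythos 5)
Your proposal is correct and follows essentially the same route as the paper: form $H_5$ from \eqref{NHP2_ham}, derive the adjoint system and transversality conditions for \eqref{NHP_2}, compute the switching functions, and obtain the projection characterization \eqref{NHP2_11}--\eqref{NHP2_22} by the standard three-case argument. Your sign for $\frac{\partial H_5}{\partial \eta_2}=-\frac{m\lambda_2}{m+d_2}-\eta_2$ is in fact the correct one for the harvesting (rather than stocking) term and is consistent with the stated control \eqref{NHP2_22}; the paper's displayed derivative carries a stray positive sign evidently copied from Model 2, so your care on this point is warranted.
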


\subsection{Model 6: $\boldsymbol{G_1(f)=f^{\frac{3}{2}}, G_2(m)=m^{\frac{3}{2}}}$}
Consider an alternative model of the harvesting given by,
\begin{equation}
\label{NHP_3}
\begin{split}
& \frac{df}{dt}=\frac{1}{2}fm\beta L-\delta f-\eta_1 f^{\frac{3}{2}},\\
& \frac{dm}{dt}=\frac{1}{2}fm\beta L-\delta m-\eta_2 m^{\frac{3}{2}}.
\end{split}
\end{equation}

\subsubsection{Global stability}
The global stability of the trivial equilibrium for model \eqref{NHP_3} is proven in the following theorem.
\begin{theorem}
\label{NHP3_global}
The trivial equilibrium $(0,0)$ for \eqref{NHP_3} is globally asymptotically stable if $\beta K<2\delta$.
\end{theorem}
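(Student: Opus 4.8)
The plan is to reuse the Lyapunov function $V=f+m$ that worked for Theorems \ref{thm_global_3}, \ref{NPH1_global}, \ref{HP2_global}, \ref{thm_global_HP3}, \ref{NHP2_global}, and to exploit the fact that the power-law harvesting terms $-\eta_1 f^{3/2}$ and $-\eta_2 m^{3/2}$ are nonnegative quantities that are \emph{subtracted}, hence they only make $\dot V$ more negative and can simply be discarded in the estimate. First I would note that the region $\{0\le f,m,\ 0\le f+m\le K\}$ is forward invariant (so $f,m\le K$ along trajectories), which is what lets us replace $L=1-\frac{f+m}{K}$ by its upper bound $1$ and replace $\frac1f,\frac1m$ by their lower bound $\frac1K$.

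The computation I would carry out is
\beq
\begin{split}
\frac{dV}{dt}&=\frac{df}{dt}+\frac{dm}{dt}=fm\beta L-\delta f-\eta_1 f^{\frac32}-\delta m-\eta_2 m^{\frac32}\\
&\le fm\beta-\delta f-\delta m,\qquad\text{since }L\le 1\text{ and }\eta_1 f^{\frac32},\eta_2 m^{\frac32}\ge 0\\
&=\left[\beta-\frac{\delta}{m}-\frac{\delta}{f}\right]fm\\
&\le\left[\beta-\frac{\delta}{K}-\frac{\delta}{K}\right]fm=\left[\beta-\frac{2\delta}{K}\right]fm,
\end{split}
\eeq
which is strictly negative whenever $f,m>0$ provided $\beta<\frac{2\delta}{K}$, i.e. $\beta K<2\delta$. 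To finish, I would dispose of the boundary of the state space separately: if $f=0$ and $m>0$ then $\dot V=-\delta m-\eta_2 m^{3/2}<0$, and symmetrically if $m=0$ and $f>0$; thus $\dot V<0$ for every $(f,m)\neq(0,0)$ in the invariant region, and since $V$ is positive definite and radially unbounded on that region, LaSalle/Lyapunov gives global asymptotic stability of $(0,0)$.

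Honestly there is no real obstacle here; the only thing to be slightly careful about is the division step $\beta-\frac{\delta}{m}-\frac{\delta}{f}$, which is only licit for $f,m>0$, so the $f=0$ and $m=0$ edges must be handled by the direct sign argument above rather than by the factored bound. I would also remark (as the earlier remark following Theorem \ref{NPH1_global} does) that the hypothesis $\beta K<2\delta$ here is \emph{weaker} than the FHMS condition $\beta K<2\delta+\eta_1-\eta_2$ and matches the intuition that harvesting both sexes never hurts eradication, since no mechanism reintroduces females.
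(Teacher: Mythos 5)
Your proposal is correct and follows essentially the same route as the paper: the same Lyapunov function $V=f+m$, dropping the nonnegative harvesting terms, bounding $L\le 1$, and factoring out $fm$ to get the condition $\beta K<2\delta$. The only difference is that you explicitly treat the $f=0$ or $m=0$ edges and the invariance of the region, which the paper's proof glosses over; this is a welcome tightening rather than a different argument.
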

\begin{proof}
Again, consider the Lyapunov function $V=f+m$. It is left to show that $\frac{dV}{dt}<0$ for all $(f,m) \not =(0,0)$. Consider 
\beq%\begin{equation}
\begin{split}
\frac{dV}{dt}&=\frac{df}{dt}+\frac{dm}{dt}\\
&=fm\beta L-(\delta+\eta_1 \sqrt{f})f-(\delta+\eta_2 \sqrt{m})m\\
&\leq fm\beta -(\delta+\eta_1 \sqrt{f})f-(\delta+\eta_2 \sqrt{m})m, \quad \mathrm{since} \quad L \leq 1 \\
&\leq fm\beta-\delta f-\delta m\\
&\leq (\beta-\frac{\delta}{m}-\frac{\delta}{f}) fm\\
&\leq  (\beta-\frac{\delta}{K}-\frac{\delta}{K}) fm.\\
\end{split}
\eeq%\end{equation}
It is enough to show $\beta-\frac{\delta}{K}-\frac{\delta}{K}<0$. By direct calculation, we have $\beta K<2\delta$, which proves theorem \eqref{NHP3_global}.
\end{proof}

\subsubsection{Optimal control}
The parameters $\eta_1$ and $\eta_2$ are assumed to be time dependent and define the objective function
\beq%\begin{equation}
  J_6(\eta_{1},\eta_{2})= \int^{T}_{0} -(f+m)- \frac{1}{2}(\eta_{1}^{2} +\eta_{2}^{2} ) dt
\eeq%\end{equation}
subject to \eqref{NHP_3} and with initial conditions $f(t_{0})=f_{0}, m(t_{0})=m_0$. The optimal controls are sought within the set $U_6$ where
\beq%\begin{equation}
 U_6= \{ (\eta_{1},\eta_{2})~|~\eta_{i} \ \mbox{measurable}, \  0 \leq \eta_{1} \leq 1, 0 \leq \eta_{2} \leq 1, \  t \in [0,T], \ \forall T\}.
\eeq%\end{equation}
The goal is obtain optimal $(\eta_{1}^{*}, \eta_{2}^{*})$ such that
\beq%\begin{equation}
\begin{split}
  J_6(\eta_{1}^{*}, \eta_{2}^{*}) &= \underset{(\eta_{1},\eta_{2})}{\max} \int^{T}_{0} -(f+m) - \frac{1}{2}(\eta_1^2+\eta_{2}^{2})  dt \\
  & = \underset{(\eta_{1},\eta_{2})}{\min} \int^{T}_{0} f+m + \frac{1}{2}(\eta_1^2+\eta_{2}^{2})  dt.\\
 \end{split}
\eeq%\end{equation}
The following existence theorem is given.

\begin{theorem}
\label{PH3_thm_oct}
Consider the optimal control problem \eqref{HP_3}. There exists $(\eta_{1}^{*}, \eta_{2}^{*}) \in U_6$ such that
\begin{equation}
  J_6(\eta_{1}^{*}, \eta_{2}^{*}) = \underset{(\eta_{1},\eta_{2})}{\max} \int^{T}_{0} -(f+m) - \frac{1}{2}(\eta_1^2+\eta_{2}^{2})  dt.
\end{equation}
\end{theorem}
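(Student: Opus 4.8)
The plan is to reproduce the argument already used for Theorem~\ref{thm_NPH1_oct} and Theorem~\ref{thm_tyc_3_oct}, namely to verify the hypotheses of the standard existence theorem for optimal controls of Fleming and Rishel~\cite{FR75}. Concretely, one needs four ingredients: (i) the admissible class $U_6$ is nonempty and every admissible pair produces a unique state trajectory; (ii) the state variables $f,m$ are globally bounded along every admissible trajectory; (iii) the right-hand side of \eqref{NHP_3} is affine in the control $(\eta_1,\eta_2)$ with coefficients bounded on the invariant region, so the velocity set is convex; and (iv) the running cost $L_6(f,m,\eta_1,\eta_2)=-(f+m)-\tfrac12(\eta_1^2+\eta_2^2)$ is concave (indeed strictly concave) in $(\eta_1,\eta_2)$ and bounded. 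Once these are in place, the theorem of~\cite{FR75} yields an optimal $(\eta_1^*,\eta_2^*)\in U_6$.

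I would carry this out in order. First, $U_6=\{(\eta_1,\eta_2):0\le\eta_1\le1,\ 0\le\eta_2\le1,\ \eta_i\text{ measurable}\}$ is convex, closed and bounded, and is nonempty since $(\eta_1,\eta_2)\equiv(0,0)$ belongs to it. Second, the region $\mathcal R=\{0\le f,\ 0\le m,\ f+m\le K\}$ is forward invariant for \eqref{NHP_3}: on $\{f=0\}$ one has $f'=0$, on $\{m=0\}$ one has $m'=0$, and on $\{f+m=K\}$ one has $L=0$ so $\tfrac{d}{dt}(f+m)=-\delta(f+m)-\eta_1 f^{3/2}-\eta_2 m^{3/2}\le 0$; hence for initial data in $\mathcal R$ the states stay in $\mathcal R$ and are globally bounded, $0\le f,m\le K$. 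On $\mathcal R$ the harvesting nonlinearities $f\mapsto f^{3/2}$, $m\mapsto m^{3/2}$ are Lipschitz, since their derivatives $\tfrac32 f^{1/2}$, $\tfrac32 m^{1/2}$ are bounded by $\tfrac32\sqrt K$, so the vector field is (locally) Lipschitz and each admissible control yields a unique solution on $[0,T]$. Third, in \eqref{NHP_3} the control enters only through the terms $-\eta_1 f^{3/2}$ and $-\eta_2 m^{3/2}$, i.e.\ the system is \emph{linear} in $(\eta_1,\eta_2)$, so the set of admissible velocities is convex; and $L_6$ is the sum of a term independent of the control and the concave term $-\tfrac12(\eta_1^2+\eta_2^2)$, and is bounded (between $-2K-1$ and $0$ on the relevant set).

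Finally, I would assemble these into the existence conclusion in the usual way: take a maximizing sequence $(\eta_1^n,\eta_2^n)$ for $J_6$; by boundedness in $L^\infty([0,T])^2$ extract a weak-$*$ convergent subsequence with limit $(\eta_1^*,\eta_2^*)\in U_6$; the associated states are equi-Lipschitz (the bound on $f',m'$ is uniform on $\mathcal R$), so by Arzel\`a--Ascoli they converge uniformly to the states driven by $(\eta_1^*,\eta_2^*)$; and concavity of $L_6$ in the control upgrades weak-$*$ convergence of the controls to the inequality $\limsup_n J_6(\eta_1^n,\eta_2^n)\le J_6(\eta_1^*,\eta_2^*)$, so the limit is optimal. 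The only place that could conceivably cause trouble is the superlinear growth of the harvesting terms $\eta_i(\cdot)^{3/2}$, but this is neutralized once the invariance of $\mathcal R$ is established, after which $f^{3/2},m^{3/2}\le K^{3/2}$ and everything is tame; so there is in fact no real obstacle, and in keeping with the earlier results the paper may simply state ``the proof is similar to Theorem~\ref{thm_NPH1_oct} and is omitted for brevity.''
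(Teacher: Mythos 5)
Your proposal is correct and follows essentially the same route as the paper: the paper simply notes that the proof is similar to Theorem \ref{thm_NPH1_oct}, i.e.\ global boundedness of the states and controls plus concavity of the objective in $(\eta_1,\eta_2)$, invoking the standard existence result of \cite{FR75}. You have merely filled in the details the paper omits for brevity (invariance of the region, Lipschitz continuity of the $f^{3/2},m^{3/2}$ terms there, convexity of the velocity set, and the maximizing-sequence/weak-$*$ compactness argument), and these details are accurate.
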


\begin{proof}
The proof is similar to Theorem \ref{thm_NPH1_oct} and is omitted for brevity.
\end{proof}

As in the previous sections, the Hamiltonian for $J_6$ is considered, that is,
\beq%\begin{equation}
\label{HP3_ham}
H_6=-(f+m) - \frac{1}{2}(\eta_1^2+\eta_{2}^{2}) +\lambda_1 f'+\lambda_2 m'.
\eeq%\end{equation}
Again, we establish differential equations for the adjoint,
\beq%\begin{equation}
\begin{split}
\lambda_1'(t) =& 1-\lambda_1 [\frac{m \beta }{2} (1-\frac{f+m}{K})-\frac{f m \beta}{2K}-\delta-\frac{3}{2} \eta_1 f^{\frac{1}{2}}] \\
& -\lambda_2 [\frac{ m \beta}{2} (1-\frac{f+m}{K})-\frac{ f m \beta}{2K}],\\
\lambda_2'(t)= &1-\lambda_1 [\frac{f \beta }{2} (1-\frac{f+m}{K})-\frac{f m \beta}{2K}] \\
&-\lambda_2 [\frac{ f \beta}{2} (1-\frac{f+m}{K})-\frac{ f m \beta}{2K}-\delta-\frac{3}{2} \eta_2 m^{\frac{1}{2}}],
\end{split}
\eeq%\end{equation}
with the transversality condition $\lambda_1(T)=\lambda_2(T)=0$.  The derivatives of the Hamiltonian are,
\beq%\begin{equation}
\begin{split}
& \frac{\partial H_6}{\partial \eta_1}=-\lambda_1 f^{\frac{3}{2}}-\eta_1,\\
& \frac{\partial H_6}{\partial \eta_2}=-\lambda_2 m^{\frac{3}{2}}-\eta_2.
\end{split}
\eeq%\end{equation}
It is simple to verify that the optimal controls $\eta_1^*$ and $\eta_2^*$ are compactly given by
%\begin{equation}
%\begin{split}
%\frac{\partial H_6}{\partial \eta_{1}}<0 & \Rightarrow \eta_{1}^{*}=0    \qquad \qquad %\Rightarrow 0>-\lambda_1 f^{\frac{3}{2}} \\
%\frac{\partial H_6}{\partial \eta_{1}}=0 & \Rightarrow \eta_{1}^{*}=-\lambda_1 %f^{\frac{3}{2}}  \quad \Rightarrow 0 \leq -\lambda_1 f^{\frac{3}{2}} \leq 1\\
%\frac{\partial H_6}{\partial \eta_{1}}>0 & \Rightarrow \eta_{1}^{*}=1 \quad \quad %\qquad \Rightarrow 1<- \lambda_1 f^{\frac{3}{2}}
%\end{split}
%\end{equation}
%Similarly for optimal $\eta_2^*$. A compact way of writing the optimal control $\eta_1, \eta_2$ is 
\begin{eqnarray}
\label{NHP3_1}
\eta_1^{*}&=&\min(1, \max(0, -\lambda_1 f^{\frac{3}{2}})), \\
\label{NHP3_2}
\eta_2^{*}&=&\min(1, \max(0, -\lambda_2 m^{\frac{3}{2}})).
\end{eqnarray}

\begin{theorem}
An optimal control $(\eta_1^*, \eta_2^*) \in U_6$ for the system \eqref{NHP_3} that maximizes the objective functional $J_6$ is characterized by \eqref{NHP3_1}-\eqref{NHP3_2}.
\end{theorem}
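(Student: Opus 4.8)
The plan is to apply Pontryagin's maximum principle exactly as in the proofs of the analogous characterizations for Models 1--5, since every ingredient has already been assembled above. Existence of an optimal pair $(\eta_1^*,\eta_2^*)\in U_6$ is guaranteed by Theorem~\ref{PH3_thm_oct}, and along any admissible trajectory the states stay in the invariant region $\{0\le f,\ 0\le m,\ 0\le f+m\le K\}$, so $f^{3/2}$ and $m^{3/2}$ are bounded; consequently the adjoint system displayed above admits an absolutely continuous solution $(\lambda_1,\lambda_2)$ on $[0,T]$ meeting the transversality conditions $\lambda_1(T)=\lambda_2(T)=0$.

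First I would invoke the maximum principle: along the optimal trajectory the optimal controls maximize the Hamiltonian $H_6$ pointwise in $(\eta_1,\eta_2)$ over $[0,1]^2$ for almost every $t$. Since $H_6$ is separately quadratic in $\eta_1$ and in $\eta_2$ with leading coefficient $-\tfrac12<0$, it is strictly concave in each control variable, so the pointwise maximizer over the interval $[0,1]$ is delivered by the projected first-order condition rather than by unconstrained stationarity.

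Next I would run the three-case sign analysis of $\partial H_6/\partial\eta_1=-\lambda_1 f^{3/2}-\eta_1$ verbatim from the argument leading to \eqref{opt_1}: if this derivative is negative throughout $[0,1]$ the maximizer is the left endpoint $\eta_1^*=0$, forcing $-\lambda_1 f^{3/2}<0$; if it vanishes at an interior point then $\eta_1^*=-\lambda_1 f^{3/2}\in[0,1]$; and if it is positive throughout $[0,1]$ the maximizer is $\eta_1^*=1$, forcing $-\lambda_1 f^{3/2}>1$. These three cases collapse into the single formula $\eta_1^*=\min(1,\max(0,-\lambda_1 f^{3/2}))$, which is \eqref{NHP3_1}. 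Applying the identical reasoning to $\partial H_6/\partial\eta_2=-\lambda_2 m^{3/2}-\eta_2$ produces \eqref{NHP3_2}, completing the characterization.

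I do not expect a genuine obstacle: the only points deserving a word of care are that the characterization holds for almost every $t$, and that the clipping to $[0,1]$ is legitimate because $H_6$ is concave in each argument on the admissible interval; everything else is a direct repetition of the earlier optimal-control proofs in the paper, so the proof can be stated in two or three lines with a reference to Theorem~\ref{thm_NPH1_oct}.
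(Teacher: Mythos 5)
Your proposal is correct and follows essentially the same route as the paper: existence from the preceding theorem, Pontryagin's maximum principle with the stated Hamiltonian and adjoint/transversality conditions, and the three-case sign analysis of $\partial H_6/\partial\eta_1=-\lambda_1 f^{3/2}-\eta_1$ and $\partial H_6/\partial\eta_2=-\lambda_2 m^{3/2}-\eta_2$ (exactly the argument used for Model 1 leading to \eqref{opt_1}), collapsed into the clipped formulas \eqref{NHP3_1}--\eqref{NHP3_2}. The paper simply states this verification as immediate, so your slightly more explicit write-up, including the concavity remark justifying the projection onto $[0,1]$, is a faithful elaboration rather than a different proof.
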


\end{document}